\documentclass[12pt, reqno]{amsart}

\usepackage{amsmath,amsfonts,amsbsy,amsgen,amscd,mathrsfs,amssymb,amsthm}
\usepackage{enumerate}

\usepackage[usenames,dvipsnames]{xcolor}
\usepackage[colorlinks=true,citecolor=blue,linkcolor=blue]{hyperref}
\usepackage[a4paper,margin=1in]{geometry}
\usepackage{setspace}
\setstretch{1.2} 
\usepackage{amsmath}

\allowdisplaybreaks[4]
\newtheorem{theorem}{Theorem}[section]
\newtheorem{definition}[theorem]{Definition}
\newtheorem{lemma}[theorem]{Lemma}
\newtheorem{corollary}[theorem]{Corollary}
\newtheorem{remark}[theorem]{Remark}
\newtheorem{proposition}[theorem]{Proposition}

\begin{document}
\begin{sloppypar}

\title[general $L_p$ $\mu$-projection body and general $L_p$ $\mu$-centroid body]{Extreme inequalities of general $L_p$ $\mu$-projection body and general $L_p$ $\mu$-centroid body}

\author{Chao Li}
\address{School of Mathematics and Statistics, Ningxia University, Yinchuan, Ningxia, 750021, China}
\email{lichao@nxu.edu.cn}

\author{Gangyi Chen}
\address{Gangyi Chen \newline \indent School of Mathematics and Information Science, Guangzhou University, Guangzhou, 510006, China}
\thanks{Corresponding author: Gangyi Chen} 
\email{gychen@gzhu.edu.cn}

\begin{abstract}
In this paper, we introduce the concept of general $L_p$ projection body and general $L_p$ centroid body of general measures with positive homogeneity density function, and prove the corresponding extreme inequalities. Meanwhile, we also study their measure comparison problem and monotone inequalities.
\end{abstract}

\subjclass[2020]{52A39; 52A40; 52A20}

\keywords{general $L_p$ $\mu$-projection body, general $L_p$ $\mu$-centroid body, positive homogeneity measure, extreme inequality, measure comparison problem}
\maketitle

\thispagestyle{empty}

\section{Introduction and main results}
 Let $\mathscr{K}^n$ denote the set of convex bodies (compact, convex subsets with non-empty interiors) in Euclidean space $\mathbb{R}^n$. Let $\mathscr{K}^n_o$ denote the set of convex bodies containing the origin in their interiors in $\mathbb{R}^n$. For the set of star bodies (about the origin) in $\mathbb{R}^n$, we write $\mathscr{S}^n_o$. Besides, $\mathbb{S}^{n-1}$ denotes the unit sphere and $V_n(K)$ denotes the $n$-dimensional volume of the body $K$. The volume of the Euclidean unit ball $B^n$ is given by
$\kappa_n=\pi^{n/2}/\Gamma(1+n/2)$.

As is known to all , the projection body has always been one of the main research contents of Brunn-Minkowski theory, which can be described as follows: Let $K\in\mathscr{K}^n$, the projection body $\Pi K$ of $K$ is an origin-symmetric convex body whose support function is defined by (see \cite{SRAA})
\begin{align*}
h({\Pi K},u)=V_{n-1}(K | u^\bot), \qquad u \in \mathbb{S}^{n-1},
\end{align*}
where $V_{n-1}$ is the ($n-1$)-dimensional volume and $K | u^\bot$ represents the image of the orthogonal projection of $K$ onto the subspace orthogonal to $u$.

Since the concept of projection body was proposed by Minkowski, it has attracted extensive attention of many scholars studying convex geometry. Moreover, the Shephard problem \cite{SHEP} of projection body, Petty (affine) projection inequality \cite{PETCM,PETTY}, Zhang's projection inequality \cite{ZANGY}, Schneider conjecture of projection inequality \cite{SCHRF} and a series of far-reaching and meaningful research topics of projection body are well known to scholars in convex geometry. In recent years, the concept of projection body has been extended to $L_p$ projection body \cite{LDYZG}, nonsymmetric $L_p$ projection body \cite{HSFE}, Orlicz projection body \cite{LTYZX} and complex projection body \cite{AJBD}, which further enriches the research content of projective body.

The research of centroid body, which has the same status as projection body, has also attracted the attention of scholars. For $K\in\mathscr{S}^n_o$, the centroid body $\Gamma K$ of $K$ is a convex body,  whose support function is defined by (see \cite{SRAA})
\begin{align*}
h(\Gamma K,u)=\frac{1}{V(K)}\int_K |u \cdot x | dx, \qquad  u \in \mathbb{S}^{n-1}.
\end{align*}
In particular, the notion of centroid body is also extended to $L_p$-centroid body \cite{LZWK}, general $L_p$-centroid body \cite{HSFE,LMIG}, Orlicz centroid body \cite{LHGEZ}, Orlicz-Lorentz centroid body \cite{NVHV} and complex centroid body \cite{HZER}. Moreover, the study of Busemann-Petty centroid inequality \cite{PETTA} and Shephard type problem \cite{LUCRA,SCHRG} of centroid body has had a far-reaching impact, see \cite{CSGP,FMMT,CYZA,LCWD,ZHANG,ZZGX}. We can also refer to Gardner \cite{GRJD} and Schneider's \cite{SRAA} two golden classics, which contain a relatively complete collection of a large number of research results on projection body and centroid body.

In 2014, Milman and Rotem \cite{MR} studied the complementary Brunn-Minkowski inequality and isoperimetric inequality of homogeneous and non-homogeneous measures. Afterwards, Livshyts \cite{LIV} proposed the concept of mixed $\mu$ measure and put forward a new notion which relates projections of convex bodies to a given measure $\mu$, which is a generalization of the Lebesgue volume of a projection.

\noindent{\bf Definition 1.A (Page 5 of \cite{LIV}).}~~{\it Suppose $K\in\mathscr{K}^n$  and $\mu$ is a measure on $\mathbb{R}^{n}$ with continuous density $\omega$. Consider a unit vector $u \in \mathbb{S}^{n-1}$. The function on the cylinder $\mathbb{S}^{n-1} \times[0,1]$ is defined by
\begin{align*}
p_{\mu, K}( u , t):=\frac{n}{2} \int_{\mathbb{S}^{n-1}}|u \cdot v| d S_{\mu}(tK, v).
\end{align*}
Then the $\mu$-projection function on the unit sphere is defined by
\begin{align*}
P_{\mu, K}( u ):=\int_{0}^{1} p_{\mu, K}(u,t)dt,
\end{align*}
where $dS_{\mu}(K,\cdot)$ denote the surface $\mu$-area measure.}

 In the particular case of Lebesgue measure $\mathcal{L}$, $P_{\mathcal{L}, K}( u )=V_{n-1}(K| u ^{\perp})$.

Afterwords, Wu \cite{WDH2} extended the definition of  $\mu$-projection function  \cite{LIV} to $L_p$ cases and studied the Firey-Shephard problem for homogeneous measure. Recently, Langhurst, Roysdon and Zvavitch \cite{LRZ} proved Zhang's inequality with respect to $h(\Pi_\mu K,u)=\frac{1}{n}p_{\mu,K}(u,1)$, for $u \in \mathbb{S}^{n-1}$. 

In 2022, Wu and Bu \cite{WDH3} introduced the following concept of the $L_{p}$-centroid body for general measures and studied the measure-comparison problem.

\noindent{\bf Definition 1.B (Page 3 of \cite{WDH3}).}~~{\it Let $p \geq 1$ and $\mu$ be a measure on $\mathbb{R}^{n}$ with continuous density $\omega$. For $K\in\mathscr{S}^n_o$, the $(p, \mu)$-centroid body $\Gamma_{\mu, p} K$ of $K$ is the body defined by, 
\begin{align*}
h(\Gamma_{\mu,p} K,x)^p=\frac{1}{\mu(K)} \int_{K}|x \cdot y|^{p} d \mu(y),
\end{align*}
for every $x \in \mathbb{R}^{n}$.}

In the case of Lebesgue measure $\mathcal{L}, \Gamma_{\mathcal{L}, p} K$ is just the classical $L_{p}$-centroid body $\Gamma_{p} K$.

In recent years, the research on extending some conclusions of the classic Brunn-Minkowski theory to arbitrary measures has attracted much attention. For more recent developments with respect to the convex body of arbitrary measures, also see \cite{AHQZ,HOS,KPZA,KLL,LCWG,NJY,ROYS,RX,ZVAV}.

In this paper, we apply the method of Haberl and Schuster \cite{HSFE}, and first put forward the following concept of the projection body of general measures to the asymmetric case.

Let  $p \geq 1$ and $\mu$ be a $1/s$ homogeneous Borel measure on $\mathbb{R}^n$ with $1/r$ homogeneous density $\omega$, where $r>0$ and $s$ satisfy the equation $s=\frac{1}{n+\frac{1}{r}}$. For $K \in \mathscr{K}_{\mathrm{o}}^n$ and function $\psi_{\tau}:
\mathbb{R} \rightarrow [0,\infty)$ defined by
$\psi_{\tau}(t)=|t|+\tau t$, $\tau \in [-1,1]$, the general $L_p$ $\mu$-projection body $\Pi_{\mu,p}^{\tau}K
\in \mathscr{K}_{\mathrm{o}}^n$ of $K$ is defined by (see Section \ref{SEC3}) 
\begin{align*}
h(\Pi_{\mu,p}^{\tau}K,u)^p=c_{n,p}(\tau)\int_{\mathbb{S}^{n-1}}\psi_{\tau}(u\cdot
v)^p dS_{\mu,p}(K,v),
\end{align*}
where $u \in \mathbb{S}^{n-1}$ and $dS_{\mu,p}(K, \cdot)$ denotes the $L_p$-surface $\mu$-area measure of $K$. In the case of Lebesgue measure $\mathcal{L}$, $\Pi_{\mathcal{L}, p}^\tau K$ is just the general $L_{p}$-projection body $\Pi_{p}^\tau K$ defined by Haberl and Schuster \cite{HSFE}.

Suppose $\mu$ is a $1/s$ homogeneous Borel measure on $\mathbb{R}^n$ with $1/r$ homogeneous density $\omega$, where $r>0$ and $s$ satisfy the equation $s=\frac{1}{n+\frac{1}{r}}$. Let  $p \geq 1$ and $K \in \mathscr{S}^n_o$, for $u \in \mathbb{S}^{n-1}$, the general $L_p$ $\mu$-centroid body $\Gamma_{\mu,p}^{\tau}K\in \mathscr{K}^n_{\mathrm{o}}$ of $K$ is defined by (see Section \ref{SEC3}) 
\begin{align*}
h(\Gamma_{\mu,p}^{\tau}K,u)^p=&\frac{2}{\alpha_{n,p}(\tau)\mu(K)}\int_K \psi_{\tau}(u\cdot
x)^pd\mu(x).
\end{align*}
In particular, if $\mu$ is the Lebesgue measure $\mathcal{L}$, then $\Gamma_{\mathcal{L}, p}^\tau K$ is  the general $L_p$-centroid body $\Gamma_{p}^\tau K$ defined by Feng, Wang and Lu \cite{FWLF}. In addition, $[(\alpha_{n,p}(\tau)V(K))/(2c_{n,p}(\tau))]^\frac{1}{p}\Gamma_{\mathcal{L}, p}^\tau K$ is the general $L_p$-moment body $M^\tau_{p}K$ (see Page 10 of \cite{HSFE}). 

The following are the main results of this paper. Firstly, we study the  extreme inequality of the polar body of general $L_p$ $\mu$-projection body.

\begin{theorem}\label{TLD1}
Let $\mu$ be a $1/s$ homogeneous Borel measure on $\mathbb{R}^n$ with $1/r$ homogeneous density function $\omega$, and $\omega$ is even,  where $r>0$ and $s$ satisfy the equation $s=\frac{1}{n+\frac{1}{r}}$, if $K\in \mathscr{K}^n_o$, $p\geq 1$, and $\tau \in [-1,1]$, then
\begin{align}\label{TYTJZ}
\mu(\Pi_{\mu,p}^\ast K) \leq \mu(\Pi_{\mu,p}^{\tau,\ast} K) \leq \mu(\Pi_{\mu,p}^{\pm,\ast} K),
\end{align}
when $\tau \neq
0$, equality holds in the left inequality if and only if $\Pi_{\mu,p}^{\tau}K$ is origin-symmetric, and when $\tau \neq \pm 1$, equality holds in the right inequality if and only if $\Pi_{\mu,p}^{\pm} K$ is origin-symmetric.
\end{theorem}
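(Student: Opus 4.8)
### Proof Strategy

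The plan is to follow the classical scheme used by Haberl and Schuster for the general $L_p$-projection body, but carried out with respect to the measure $\mu$ rather than Lebesgue volume. The starting point is the identity defining $h(\Pi_{\mu,p}^\tau K,u)^p$ as an integral of $\psi_\tau(u\cdot v)^p$ against the $L_p$-surface $\mu$-area measure $S_{\mu,p}(K,\cdot)$. First I would record the pointwise relation between the support functions of $\Pi_{\mu,p}^\tau K$, $\Pi_{\mu,p} K := \Pi_{\mu,p}^0 K$ and $\Pi_{\mu,p}^{\pm}K$. Writing $f_1(v)=(u\cdot v)_+$ and $f_2(v)=(u\cdot v)_-$ for the positive and negative parts, one has $\psi_\tau(u\cdot v)=(1+\tau)f_1(v)+(1-\tau)f_2(v)$, and $\psi_0 = f_1+f_2$, while $\psi_{\pm 1}$ picks out $2f_1$ or $2f_2$. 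Inserting these and using the normalization constants $c_{n,p}(\tau)$ (which are chosen precisely so these bodies agree on even measures / symmetric data), one gets, for each fixed $u$, a convexity-type comparison of $h(\Pi_{\mu,p}^\tau K,u)^p$ in the parameter $\tau$, with $\tau=0$ giving the minimum and $\tau=\pm 1$ the maximum of the relevant expression. The key analytic input here is an elementary inequality for the function $\tau\mapsto \bigl[(1+\tau)^p a + (1-\tau)^p b\bigr]$ (or its appropriately weighted/normalized version after dividing by $c_{n,p}(\tau)$), which is minimized at $\tau=0$ when $a=b$ after symmetrization and whose extremes at $\tau=\pm1$ are controlled by $\max\{a,b\}$-type bounds; the evenness of $\omega$ is exactly what forces the relevant symmetrization $\int f_1^p \,dS_{\mu,p} = \int f_2^p\,dS_{\mu,p}$ to behave, since $S_{\mu,p}(K,\cdot)$ need not itself be even.

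Once the support-function inequalities $h(\Pi_{\mu,p}K,u) \le h(\Pi_{\mu,p}^\tau K,u) \le h(\Pi_{\mu,p}^{\pm}K,u)$ are established for all $u\in\mathbb{S}^{n-1}$ (after verifying the constants line up so the middle body interpolates), the second step is to pass to polars and then to $\mu$-measure. Pointwise domination of support functions gives the reverse inclusions of the polar bodies: $\Pi_{\mu,p}^{\pm,\ast}K \subseteq \Pi_{\mu,p}^{\tau,\ast}K \subseteq \Pi_{\mu,p}^{\ast}K$. Here I must be slightly careful because the middle body only dominates/ is dominated on "one side" depending on the sign of $\tau$; but since $\psi_\tau(-t)=\psi_{-\tau}(t)$ and $\Pi_{\mu,p}^{-\tau}K$ is the reflection of $\Pi_{\mu,p}^\tau K$, the $\mu$-measure (with $\omega$ even, hence $\mu$ reflection-invariant) is unchanged under $\tau \mapsto -\tau$, so it suffices to treat $\tau\in[0,1]$ and the general statement follows by this symmetry. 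Finally, monotonicity of $\mu$ under set inclusion yields $\mu(\Pi_{\mu,p}^{\ast}K) \le \mu(\Pi_{\mu,p}^{\tau,\ast}K)\le \mu(\Pi_{\mu,p}^{\pm,\ast}K)$ — note the order flips once more because we are taking $\mu$ of the polars, consistent with \eqref{TYTJZ}.

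For the equality conditions I would trace back through the two inequalities. In the left inequality, equality in the support-function comparison $h(\Pi_{\mu,p}K,u)\le h(\Pi_{\mu,p}^\tau K,u)$ for a given $u$ forces (when $\tau\ne 0$) the one-dimensional inequality to be tight, which happens iff $\int (u\cdot v)_+^p\,dS_{\mu,p}(K,v) = \int (u\cdot v)_-^p\,dS_{\mu,p}(K,v)$; combined with the fact that $h(\Pi_{\mu,p}^\tau K,u)=h(\Pi_{\mu,p}^\tau K,-u)$ is then equivalent to this balance for all $u$, this is precisely the statement that $\Pi_{\mu,p}^\tau K$ is origin-symmetric, and equality in the $\mu$-measure inequality propagates back (using that $\mu$ has a density, so equal measure of nested bodies forces a.e. equality of the bodies, hence equality of support functions everywhere by continuity). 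The right inequality is handled the same way with the roles of $\tau$ and $\pm 1$ interchanged, the restriction $\tau\ne\pm 1$ appearing for the analogous non-degeneracy reason.

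The main obstacle I anticipate is \emph{not} the convexity inequality itself but the careful bookkeeping of the normalizing constants $c_{n,p}(\tau)$ together with the interplay between the evenness of the density $\omega$ and the (in general non-even) surface $\mu$-area measure $S_{\mu,p}(K,\cdot)$: one must check that $c_{n,p}(\tau)$ is defined so that the three bodies are genuinely comparable via support functions, and that the homogeneity constraint $s=1/(n+1/r)$ is consistent with $S_{\mu,p}(K,\cdot)$ transforming correctly under the scalings implicit in the polar/$\mu$-measure step. I would also need to confirm that $\Pi_{\mu,p}^\tau K\in\mathscr{K}_o^n$ so that its polar is well-defined and finite — this should follow from $K\in\mathscr{K}_o^n$ together with positivity properties of $S_{\mu,p}(K,\cdot)$, but it is worth stating explicitly. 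Everything else reduces to the elementary inequality and monotonicity of $\mu$.
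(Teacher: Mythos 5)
Your central step --- the pointwise support-function comparison $h(\Pi_{\mu,p}K,u)\le h(\Pi_{\mu,p}^{\tau}K,u)\le h(\Pi_{\mu,p}^{\pm}K,u)$ for all $u\in\mathbb{S}^{n-1}$, followed by polarity and monotonicity of $\mu$ under inclusion --- does not work, because these bodies are not nested in general. Writing $a=h(\Pi_{\mu,p}^{+}K,u)^p$ and $b=h(\Pi_{\mu,p}^{-}K,u)^p$, the definitions give $h(\Pi_{\mu,p}^{\tau}K,u)^p=g_1(\tau)a+g_2(\tau)b$ with $g_1(\tau)+g_2(\tau)=1$, so
$h(\Pi_{\mu,p}^{\tau}K,u)^p-h(\Pi_{\mu,p}K,u)^p=\bigl(g_1(\tau)-\tfrac12\bigr)(a-b)$,
and the sign of $a-b$ varies with $u$ (the surface measure $S_{\mu,p}(K,\cdot)$ is not even for general $K$, and the evenness of the density $\omega$ does not symmetrize it). Hence for a fixed $\tau\ne 0$ the difference changes sign as $u$ ranges over the sphere, no inclusion between $\Pi_{\mu,p}K$ and $\Pi_{\mu,p}^{\tau}K$ holds, and the $\tau\mapsto-\tau$ reflection trick cannot repair a comparison that already fails direction by direction. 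The same objection applies to the right-hand comparison with $\Pi_{\mu,p}^{\pm}K$, and the equality analysis you build on top of these inclusions collapses with them.

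The paper's proof avoids set inclusion entirely: it records the convex-combination identities $\Pi_{\mu,p}^{\tau}K=g_1(\tau)\cdot\Pi_{\mu,p}^{+}K+_p g_2(\tau)\cdot\Pi_{\mu,p}^{-}K$ and $\Pi_{\mu,p}K=\tfrac12\cdot\Pi_{\mu,p}^{\tau}K+_p\tfrac12\cdot\Pi_{\mu,p}^{-\tau}K$, passes to polars so these become $L_p$-harmonic radial combinations, and then applies the dual Brunn--Minkowski inequality for the $\mu$-measure of harmonic radial combinations (Lemma \ref{rbmi} with the exponent $-p$), which gives
$\mu(\Pi_{\mu,p}^{\tau,*}K)^{-sp}\ge g_1(\tau)\mu(\Pi_{\mu,p}^{+,*}K)^{-sp}+g_2(\tau)\mu(\Pi_{\mu,p}^{-,*}K)^{-sp}$
and the analogous estimate for $\Pi_{\mu,p}^{*}K$. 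The evenness of $\omega$ is used only to conclude $\mu(-A)=\mu(A)$, so that $\mu(\Pi_{\mu,p}^{-,*}K)=\mu(\Pi_{\mu,p}^{+,*}K)$ and $\mu(\Pi_{\mu,p}^{-\tau,*}K)=\mu(\Pi_{\mu,p}^{\tau,*}K)$ via $\Pi_{\mu,p}^{-}K=-\Pi_{\mu,p}^{+}K$ and $\Pi_{\mu,p}^{-\tau}K=-\Pi_{\mu,p}^{\tau}K$; the equality conditions then come from the dilatation case of Lemma \ref{rbmi} together with these reflection identities. If you want to salvage your write-up, replace the inclusion argument with this measure-level convexity argument.
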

\begin{remark}
If $K$ is origin-symmetric, then (\ref{TYTJZ}) is an identical equation.
\end{remark}

\begin{remark}
If $\mu$ is the Lebesgue measure, then Theorem \ref{TLD1} is proved by Haberl and Schuster (See Theorem 6.3 of \cite{HSFE}).
\end{remark}

Furthermore, the extreme inequality of the polar body of the  general $L_p$ $\mu$-centroid body is also proved.

\begin{theorem}\label{TLD2}
Let $\mu$ be a $1/s$ homogeneous Borel measure on $\mathbb{R}^n$ with $1/r$ homogeneous density function $\omega$, and $\omega$ is even, where $r>0$ and $s$ satisfy the equation $s=\frac{1}{n+\frac{1}{r}}$, if $K\in \mathscr{S}^n_o$, $p\geq 1$, and $\tau \in [-1,1]$, then
\begin{align}\label{ZXTJZ}
\mu(\Gamma_{\mu,p}^\ast K) \leq \mu(\Gamma_{\mu,p}^{\tau,\ast} K) \leq \mu(\Gamma_{\mu,p}^{\pm,\ast} K),
\end{align}
when $\tau \neq
0$, equality holds in the left inequality if and only if $\Gamma_{\mu,p}^{\tau}K$ is origin-symmetric, and when $\tau \neq
\pm 1$, equality holds in the right inequality if and only if $\Gamma_{\mu,p}^{\pm} K$ is origin-symmetric.
\end{theorem}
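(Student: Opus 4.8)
The plan is to mirror the structure of the proof of Theorem \ref{TLD1}, replacing the $L_p$-surface $\mu$-area measure $dS_{\mu,p}(K,\cdot)$ by the measure $d\mu$ restricted to $K$. The starting point is the pointwise relation between the support functions of $\Gamma_{\mu,p}^\tau K$, $\Gamma_{\mu,p}^{-\tau}K$, and the symmetric body $\Gamma_{\mu,p}K=\Gamma_{\mu,p}^0 K$. Using $\psi_\tau(t)+\psi_{-\tau}(t)=2|t|$ together with the definition of $h(\Gamma_{\mu,p}^\tau K,u)^p$, I would first establish an identity of the form
\begin{align*}
\alpha_{n,p}(\tau)\,h(\Gamma_{\mu,p}^\tau K,u)^p+\alpha_{n,p}(-\tau)\,h(\Gamma_{\mu,p}^{-\tau}K,u)^p
=2\alpha_{n,p}(0)\,h(\Gamma_{\mu,p}K,u)^p,
\end{align*}
and then record the normalization $\alpha_{n,p}(\tau)=\alpha_{n,p}(-\tau)$ (which follows from the evenness built into the constant, exactly as in Haberl–Schuster). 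This yields
\begin{align*}
h(\Gamma_{\mu,p}^\tau K,u)^p+h(\Gamma_{\mu,p}^{-\tau}K,u)^p=2\,h(\Gamma_{\mu,p}K,u)^p,
\end{align*}
valid for every $u\in\mathbb{S}^{n-1}$. Since the map $t\mapsto t^{1/p}$ is concave on $[0,\infty)$, the power-mean (or Jensen) inequality gives $h(\Gamma_{\mu,p}K,u)\ge\tfrac12\big(h(\Gamma_{\mu,p}^\tau K,u)+h(\Gamma_{\mu,p}^{-\tau}K,u)\big)$ with equality at a given $u$ iff $h(\Gamma_{\mu,p}^\tau K,u)=h(\Gamma_{\mu,p}^{-\tau}K,u)$.

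Next I would pass from support functions to radial functions of polars via $\rho(L^\ast,u)=1/h(L,u)$, and then to $\mu$-volumes of the polar bodies through the polar-coordinate formula for $\mu$. Because $\omega$ is $1/r$-homogeneous, one has
\begin{align*}
\mu(L^\ast)=\int_{\mathbb{S}^{n-1}}\!\!\int_0^{\rho(L^\ast,u)}\omega(ru)\,r^{n-1}\,dr\,du
=\Big(n+\tfrac1r\Big)^{-1}\int_{\mathbb{S}^{n-1}}\omega(u)\,\rho(L^\ast,u)^{\,n+1/r}\,du
=s\int_{\mathbb{S}^{n-1}}\omega(u)\,h(L,u)^{-1/s}\,du,
\end{align*}
using $s=1/(n+\frac1r)$. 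The evenness of $\omega$ ensures $\mu(L^\ast)=\mu((-L)^\ast)$, so $\mu(\Gamma_{\mu,p}^\tau K^\ast)=\mu(\Gamma_{\mu,p}^{-\tau}K^\ast)$ for all $\tau$. For the \emph{left} inequality in \eqref{ZXTJZ}, I combine the support-function inequality above with the fact that $t\mapsto t^{-1/s}$ is convex and decreasing on $(0,\infty)$: from $h(\Gamma_{\mu,p}K,u)\ge\tfrac12(h(\Gamma_{\mu,p}^\tau K,u)+h(\Gamma_{\mu,p}^{-\tau}K,u))$ and convexity,
\begin{align*}
h(\Gamma_{\mu,p}K,u)^{-1/s}\le\Big(\tfrac{h(\Gamma_{\mu,p}^\tau K,u)+h(\Gamma_{\mu,p}^{-\tau}K,u)}{2}\Big)^{-1/s}
\le\tfrac12 h(\Gamma_{\mu,p}^\tau K,u)^{-1/s}+\tfrac12 h(\Gamma_{\mu,p}^{-\tau}K,u)^{-1/s};
\end{align*}
integrating against $\omega(u)\,du$ and using $\mu(\Gamma_{\mu,p}^\tau K^\ast)=\mu(\Gamma_{\mu,p}^{-\tau}K^\ast)$ gives $\mu(\Gamma_{\mu,p}^\ast K)\le\mu(\Gamma_{\mu,p}^{\tau,\ast}K)$. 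The equality analysis: equality forces $h(\Gamma_{\mu,p}^\tau K,u)=h(\Gamma_{\mu,p}^{-\tau}K,u)$ for $\omega\,du$-a.e.\ $u$, hence (by continuity of support functions and positivity of $\omega$ away from the origin) for all $u$; one then checks that $h(\Gamma_{\mu,p}^{-\tau}K,u)=h(\Gamma_{\mu,p}^\tau K,-u)$ always holds, so the condition becomes $h(\Gamma_{\mu,p}^\tau K,u)=h(\Gamma_{\mu,p}^\tau K,-u)$, i.e.\ $\Gamma_{\mu,p}^\tau K$ is origin-symmetric. The restriction $\tau\neq0$ is exactly what makes the implication nonvacuous.

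For the \emph{right} inequality in \eqref{ZXTJZ}, I would compare $\Gamma_{\mu,p}^\tau K$ with $\Gamma_{\mu,p}^{\pm}K:=\Gamma_{\mu,p}^{1}K$ (equivalently $\Gamma_{\mu,p}^{-1}K$). Writing $\psi_\tau$ as a convex combination $\psi_\tau=\tfrac{1+\tau}{2}\psi_1+\tfrac{1-\tau}{2}\psi_{-1}$ (since $\psi_1(t)=2t^+$, $\psi_{-1}(t)=2t^-$ and $|t|+\tau t=(1+\tau)t^+ +(1-\tau)t^-$ up to the factor), one gets, after tracking the normalizing constants $\alpha_{n,p}(\tau)$, a bound of the form
\begin{align*}
h(\Gamma_{\mu,p}^\tau K,u)^p\le\lambda(\tau)\,h(\Gamma_{\mu,p}^{+}K,u)^p+(1-\lambda(\tau))\,h(\Gamma_{\mu,p}^{-}K,u)^p
\end{align*}
for a suitable $\lambda(\tau)\in[0,1]$ (this is where the precise definition of $\alpha_{n,p}(\tau)$ from Section \ref{SEC3}, and its behavior as $\tau\to\pm1$, must be invoked — I expect the Haberl–Schuster normalization is chosen precisely so that $\alpha_{n,p}(\tau)$ equals the integral $\int_{\mathbb{S}^{n-1}}\psi_\tau(u\cdot v)^p\,du/\kappa_{?}$ type constant, making these combinations clean). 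Then, since $h(\Gamma_{\mu,p}^{+}K,-u)=h(\Gamma_{\mu,p}^{-}K,u)$, applying the decreasing convex function $t\mapsto t^{-1/s}$ and $t\mapsto t^{p}$-monotonicity, integrating against $\omega\,du$, and using $\mu(\Gamma_{\mu,p}^{+,\ast}K)=\mu(\Gamma_{\mu,p}^{-,\ast}K)$ (again by evenness of $\omega$) yields $\mu(\Gamma_{\mu,p}^{\tau,\ast}K)\le\mu(\Gamma_{\mu,p}^{\pm,\ast}K)$; the equality characterization proceeds as before, with $\tau\neq\pm1$ ensuring $\lambda(\tau)\in(0,1)$ so that strict convexity bites unless $\Gamma_{\mu,p}^{+}K=\Gamma_{\mu,p}^{-}K$, i.e.\ $\Gamma_{\mu,p}^{\pm}K$ is origin-symmetric. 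The main obstacle, and the only place genuinely different from the Lebesgue case, is verifying that the homogeneity hypotheses on $\mu$ and $\omega$ (through the relation $s=1/(n+\frac1r)$) make the polar-coordinate representation $\mu(L^\ast)=s\int_{\mathbb{S}^{n-1}}\omega(u)h(L,u)^{-1/s}\,du$ hold and that $t\mapsto t^{-1/s}$ is convex — i.e.\ that $1/s=n+\frac1r>1$, which holds automatically for $n\ge1$, $r>0$ — so that the single Jensen/convexity step carries the whole argument; everything else is a transcription of the Haberl–Schuster proof of Theorem \ref{TLD1} with $dS_{\mu,p}(K,\cdot)$ replaced by $d\mu|_K$.
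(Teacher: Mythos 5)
Your proposal is correct and rests on the same structural skeleton as the paper's proof of Theorem \ref{dliqw2}: the decomposition $h(\Gamma_{\mu,p}^{\tau}K,\cdot)^p=g_1(\tau)h(\Gamma_{\mu,p}^{+}K,\cdot)^p+g_2(\tau)h(\Gamma_{\mu,p}^{-}K,\cdot)^p$ (the paper's (\ref{mtaupm})), the sum rule $h(\Gamma_{\mu,p}^{\tau}K,\cdot)^p+h(\Gamma_{\mu,p}^{-\tau}K,\cdot)^p=2h(\Gamma_{\mu,p}K,\cdot)^p$ (the paper's (\ref{zftzfb}) with (\ref{tytzfc})), the reflection identity $\Gamma_{\mu,p}^{-\tau}K=-\Gamma_{\mu,p}^{\tau}K$, and the evenness of $\omega$ to equate $\mu(L^{\ast})$ with $\mu((-L)^{\ast})$. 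Where you genuinely diverge is the final volumetric step: the paper passes to the harmonic radial combination $\Gamma_{\mu,p}^{\tau,\ast}K=g_1(\tau)\circ\Gamma_{\mu,p}^{+,\ast}K\,\widetilde{+}_{-p}\,g_2(\tau)\circ\Gamma_{\mu,p}^{-,\ast}K$ and invokes the dual Brunn--Minkowski inequality of Lemma \ref{rbmi} (with exponent $-p$, hence reversed), whose equality case is ``dilatates''; you instead apply the pointwise convexity of $t\mapsto t^{-1/(sp)}$ to the support-function identities and integrate against the polar-coordinate representation $\mu(L^{\ast})=s\int_{\mathbb{S}^{n-1}}h(L,u)^{-1/s}\,d\mu(u)$. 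Your route is more elementary (it never needs the Minkowski integral inequality packaged in Lemma \ref{rbmi}) and it delivers the equality characterization directly from strict convexity rather than via the dilatation condition; the paper's route yields a formally stronger intermediate power-mean bound, but after evenness both collapse to the same conclusion. Two small corrections: your displayed identity $\alpha_{n,p}(\tau)h(\Gamma_{\mu,p}^{\tau}K,u)^p+\alpha_{n,p}(-\tau)h(\Gamma_{\mu,p}^{-\tau}K,u)^p=2\alpha_{n,p}(0)h(\Gamma_{\mu,p}K,u)^p$ is not correct as written (it would force $(1+\tau)^p+(1-\tau)^p=2$); the clean sum rule follows instead from $\psi_{\tau}(t)^p+\psi_{-\tau}(t)^p=[(1+\tau)^p+(1-\tau)^p]\,|t|^p$ (using $t_+t_-=0$) together with the normalization $\alpha_{n,p}(\tau)=c_{n,p}[(1+\tau)^p+(1-\tau)^p]$. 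And for the right inequality no bookkeeping of an unknown $\lambda(\tau)$ is needed: the relation you hedge on is an exact identity with $\lambda(\tau)=g_1(\tau)=(1+\tau)^p/[(1+\tau)^p+(1-\tau)^p]$, precisely the paper's (\ref{mtaupm}).
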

\begin{remark}
If $K$ is origin-symmetric, then (\ref{ZXTJZ}) is an identical equation.
\end{remark}

This paper is organized as follows: In Section \ref{SEC2}, we list some basic knowledge of the convex geometry. In Section \ref{SEC3}, we introduce the general $L_p$ $\mu$-projection body and the general $L_p$ $\mu$-centroid body, and study some of their basic properties. We also prove the measure comparison inequality of general $L_p$ $\mu$-projection body and general $L_p$ $\mu$-centroid body and their polar bodies. In Section \ref{SEC4}, We prove Theorem \ref{TLD1} and Theorem \ref{TLD2}. Finally, in Section \ref{SEC5}, we prove the Brunn-Minkowski type inequality of general $L_p$ $\mu$-projection body, and also obtain the monotone inequality of general $L_p$ $\mu$-centroid body.

\section{Preliminaries}\label{SEC2}
Below we will list some basic facts of convex geometry.

\subsection{$L_p$-Minkowski combination and $L_p$-radial combination}$\qquad$

A convex body $K\in \mathscr{K}^n$, is uniquely determined by its support function, $h(K, \cdot)=h_K:\mathbb{R}^n\rightarrow\mathbb{R}$, which is defined by
\begin{align}\label{SF}
h(K, x)=\max\{ x\cdot y : y\in K\}, \ \ \ \ x\in \mathbb{R}^n,
\end{align}
where $x\cdot y$ denotes the standard inner product of $x$ and $y$ in $\mathbb{R}^n$. It is also clear from the definition that $h(K,u) \leq h(L,u)$ for $u \in \mathbb{S}^{n-1}$,  if and only if $K\subseteq L$.

For $K,L\in\mathscr{K}_o^n$, $p\geq1$ and $\alpha,\beta\geq0$ (not both zero), the $L_p$-Minkowski combination, $\alpha\cdot K+_p\beta\cdot L$, of $K$ and $L$ is defined by (see \cite{SRAA})
\begin{align}\label{lpmz}
h(\alpha\cdot K+_p\beta\cdot L,\cdot)^p=\alpha h(K,\cdot)^p+\beta h(L,\cdot)^p,
\end{align}
where $``+_p"$ denotes the $L_p$-sum and $\alpha\cdot K=\alpha^{1 / p} K$ is the $L_p$-Minkowski scalar multiplication. If $p=1$,  (\ref{lpmz}) is just the classical Minkowski linear combination.

If $K$ is a compact star-shaped set (about the
origin) in $\mathbb{R}^n$, then its radial function,
$\rho_K=\rho(K,\cdot):\mathbb{R}^n\setminus\{0\}\rightarrow[0,\infty)$, is defined by
\begin{align}\label{RF}
\rho(K,x)=\max\{c\geq0: c x\in K\}, \ \ x\in\mathbb{R}^n\setminus \{0\}.
\end{align}
If $\rho_K$ is positive and continuous, $K$ will be called a star body (with respect to the origin). Obviously, if $\alpha>0$, then $\rho_{\alpha K}=\alpha\rho_K$.

By (2.3), we easily see that if $K\subseteq L$, then
\begin{align}\label{RC}
\rho(K,\cdot)\leq \rho(L,\cdot),
\end{align}
with equality if and only if $K=L$.

If $K\in\mathscr{K}^n_o$, the polar body $K^\ast$, of $K$ is defined by
\begin{align}\label{PB}
K^\ast=\{x\in\mathbb{R}^n:x\cdot y\leq 1,y\in K\}.
\end{align}

From (\ref{SF}), (\ref{RF}) and (\ref{PB}), it follows that if $K\in\mathscr{K}^n_o$, then
\begin{align}\label{SRF}
\rho(K^\ast,\cdot)=\frac{1}{h(K,\cdot)}, \quad \quad \quad h(K^*,\cdot)=\frac{1}{\rho(K,\cdot)}.
\end{align}

If $K,L \in \mathscr{S}^n_o$, $p\neq0$ and $\alpha,\beta\geq 0$ (not both zero), the $L_p$-radial combination, $\alpha\circ K\widetilde{+}_p\beta\circ L \in \mathscr{S}^n_o$, of $K$ and $L$ is defined by (see \cite{SRAA})
\begin{align}\label{RA}
\rho(\alpha\circ K\widetilde{+}_{p}\beta\circ L,\cdot)^p=\alpha\rho(K,\cdot)^p+\beta\rho(L,\cdot)^p,
\end{align}
where $\widetilde{+}_p$ is called the $L_p$-radial addition and $\alpha\circ K=\alpha^{1 / p} K$ is the $L_p$-radial scalar multiplication. If $p = 1$, then $\alpha\circ K\widetilde{+} \beta\circ L$ is the classical radial combination. If $p\geq0$, then $\alpha\circ K\widetilde{+}_{-p} \beta\circ L$ is called $L_p$-harmonic radial combination.

From (\ref{lpmz}), (\ref{SRF}) and (\ref{RA}), for $K,L\in\mathscr{K}_o^n$, $p\geq1$ and $\alpha,\beta\geq0$ (not both zero), then
\begin{align}\label{hrzh}
(\alpha\cdot K+_p\beta\cdot L)^*=\alpha\circ K^*\widetilde{+}_{-p} \beta\circ L^*.
\end{align}

\subsection{$L_p$ mixed $\mu$-measure }\label{Sect 3}$\qquad$

A measurable function $\omega: \mathbb{R}^{n} \rightarrow \mathbb{R}^{+}$ is called $1/r$-homogeneous, $r \in (-\infty, \infty) \backslash\{0\}$, if for $x \in \mathbb{R}^{n}$ and $\lambda>0$, then (see Definition 2.6 of \cite{MR})
\begin{equation*}
\omega(\lambda x)=\lambda^{1/r} \omega(x),
\end{equation*}
with the interpretation when $r=\infty$ that $\omega(\lambda x)=\omega(x)$ for all $\lambda>0$, i.e. that $\omega$ is constant along rays from the origin.

A Borel measure $\mu$ on $\mathbb{R}^{n}$ is called $1/s$-homogeneous, $s \in(-\infty, \infty) \backslash\{0\}$, if for a Borel subset $\mathcal{A}\subset \mathbb{R}^{n}$, and $\lambda>0$, then (see Definition 2.7 of \cite{MR})
\begin{equation*}
\mu(\lambda \mathcal{A})=\lambda^{1/s} \mu(\mathcal{A}),
\end{equation*}
with the interpretation when $s=\infty$ that $\mu(\lambda\mathcal{A})=\mu(\mathcal{A})$ for all $\lambda>0$.

For convenience, we call the measure $\mu$ satisfying the homogeneity condition as:

{\bf ({\rm I})} Let $\mu$ be a $1/s$ homogeneous Borel measure on $\mathbb{R}^n$ with $1/r$ homogeneous density $\omega$, where $r>0$ and $s$ satisfy the equation $s=\frac{1}{n+\frac{1}{r}}$.

A function $\omega:\mathbb{R}^n\rightarrow \mathbb{R}^+$ is called $r$-concave if for every $x,y\in \mathbb{R}^n$ such that $\omega(x)\omega(y)>0$, and for every $\alpha \in [0,1]$, one has
\begin{align*}
\omega(\alpha x+(1-\alpha)y)\geq (\alpha \omega(x)^r+(1-\alpha)\omega(y)^r)^{1/r}.
\end{align*}
We remark that $0$-concave functions are also called log-concave functions.

The following generalized Brunn-Minkowski inequality is well known (see \cite{BOC,BHJ}). Let $r\in [-\frac{1}{n},+\infty]$, and let $\mu$ be a measure on $\mathbb{R}^n$ with $r$-concave density $\omega$. Let 
\begin{align*}
\frac{1}{s}=\frac{1}{r}+n.
\end{align*}
Then the measure $\mu$ is $s$-concave on $\mathbb{R}^n$. That is, for every pair of Borel sets $\mathcal{A}$ and $\mathcal{B}$ and for every $\alpha\in[0,1]$ one has
\begin{align*}
\mu(\alpha \mathcal{A}+(1-\alpha) \mathcal{B})\geq( \alpha\mu(\mathcal{A})^s+(1-\alpha)\mu(\mathcal{B})^s)^{1/s}.
\end{align*}

For convenience, we call the measure $\mu$ satisfying the concavity condition as:

{\bf ({\rm II})} Let $\mu$ be a $s$-concave Borel measure on $\mathbb{R}^n$ with $r$-concave density $\omega$, where $r>0$ and $s$ satisfy the equation $s=\frac{1}{n+\frac{1}{r}}$.

 If $\mu$ satisfies the homogeneity condition {\bf ({\rm I})}, then
\begin{align*}
\mu(K)=\int_{K} d\mu(x),
\end{align*}
from the polar coordinate transformation, we get
\begin{align}\label{MUM}
\mu(K)=s\int_{\mathbb{S}^{n-1}}\rho(K,u)^\frac{1}{s}d\mu(u).
\end{align}
Let $\omega=1$ in (\ref{MUM}), then we get
\begin{align*}
V_{n}(K)=\frac{1}{n} \int_{\mathbb{S}^{n-1}} \rho(K,u)^n d u.
\end{align*}

In 2017, Wu \cite{WDH1} extended the mixed $\mu$-measure to $L_p$ case and proposed the following $L_{p}$ mixed $\mu$-measure: Let $K, L \in \mathscr{K}^n_o$, $p \geq 1$ and $\mu$ be a measure on $\mathbb{R}^{n}$, the $L_p$ mixed $\mu$-measure $\mu_{p}(K, L)$, of $K$ and $L$, is defined by
\begin{align*}
\mu_{p}(K, L)=\liminf _{\varepsilon \rightarrow 0} \frac{\mu(K+_p \varepsilon\cdot L)-\mu(K)}{\varepsilon} .
\end{align*}
Moreover, Wu \cite{WDH1} also got the following integral expression:
\begin{align}\label{lmhmm}
\mu_{p}(K, L)=\frac{1}{p} \int_{\mathbb{S}^{n-1}} h(L,u)^p d S_{\mu, p}(K, u).
\end{align}
Here, $dS_{\mu,p}(K, \cdot)$ denotes the $L_p$-surface $\mu$-area measure of $K$. 

Especially, if $K$ is a $C^{2}$-smooth and strictly convex body with the origin in its interior, then for all $u\in \mathbb{S}^{n-1}$, the $L_p$ surface $\mu$-area measure $S_{\mu, p}(K, u)$ has representation
\begin{align*}
d S_{\mu, p}(K, u)=h_{K}^{1-p}(u) f_{K}(u) \omega\left(\nabla h_{K}(u)\right) d u.
\end{align*}
where $f_{K}$ denotes the curvature function of $K$. 

In particular, if $p=1$, then $d S_{\mu, 1}(K, \cdot)=dS_{\mu}(K,\cdot)$ is the surface $\mu$-area measure defined by Livshyts \cite{LIV}, and its expression is
\begin{align*}
d S_{\mu}(K, u)=f_{K}(u) \omega\left(\nabla h_{K}(u)\right) d u.
\end{align*}
In addition, Wu \cite{WDH1} obtained that the measure $S_{\mu, p}(K, \cdot)$ is absolutely continuous with respect to $S_{\mu}(K, \cdot)$, and has Radon-Nikodym derivative,
\begin{align*}
\frac{d S_{\mu, p}(K, \cdot)}{dS_{\mu}(K, \cdot)}=h_{K}^{1-p} .
\end{align*}

Note that $\mu_p(K, L)$ is not homogeneous in $K$, since $\mu$ is not homogeneous. However, $\mu_p(K, L)$ is $p$-homogeneous in $L$ i.e., $\mu_{p}(K, \lambda L)=\lambda^{p} \mu_p(K, L)$, for $\lambda>0$.

Meanwhile, from $L_p$ mixed $\mu$-measure, Wu \cite{WDH1} defined the following concept of the $L_p$ mixed volume with respect to $\mu$: Let $p \geq 1$, $K, L \in \mathscr{K}^n_o$ and $\mu$ be a measure on $\mathbb{R}^{n}$, then the $L_p$ mixed volume with respect to the measure $\mu$, $V_{\mu, p}(K, L)$, of $K$ and $L$, is defined by
\begin{align}\label{lphmv}
V_{\mu, p}(K, L)=\int_{0}^{1} \mu_{p}(t\cdot K, L)dt.
\end{align}

 If $\mu$ satisfies the homogeneity condition {\bf ({\rm I})}, from (\ref{lmhmm}) and (\ref{lphmv}), Wu \cite{WDH1} obtained 
\begin{align}\label{lphmt}
V_{\mu, p}(K, L)=sp\mu_p(K, L)=s\int_{\mathbb{S}^{n-1}} h(L,u)^p d S_{\mu, p}(K, u).
\end{align}

Obviously, if $\mu$ satisfies the homogeneity condition {\bf ({\rm I})}, then 
\begin{align*}
V_{\mu, p}(K, K)=\mu(K), \qquad \mu_p(K, K)=\frac{1}{sp}\mu(K).
\end{align*}

 If $\mu$ satisfies the homogeneity condition {\bf ({\rm I})}, Wu \cite{WDH1} also deduced $\mu_{p}(\lambda K, L)=\lambda^\frac{1-sp}{s} \mu_{\mathrm{p}}(K, L)$, for $\lambda>0$, thus
\begin{align}\label{lphmt2}
d S_{\mu, p}(\lambda K, \cdot)=\lambda^\frac{1-sp}{s}d S_{\mu, p}(K, \cdot).
\end{align}

Clearly, in the case of Lebesgue measure $\mathcal{L}$, we have
\begin{align*}
V_{\mathcal{L},p}(K, L)=V_{p}(K, L).
\end{align*}

Note that, for $p=1, (\ref{lphmt})$ is just
\begin{align*}
\mu_{1}(K, L)=\int_{\mathbb{S}^{n-1}} h_{L}(u) d S_{\mu, 1}(K, u),
\end{align*}
which is given by Livshyts \cite{LIV}. 

In addition, Wu \cite{WDH1} also proved the following Minkowski inequality for $L_p$-mixed volume with respect to the measure $\mu$: Let $\mu$ satisfy the homogeneity condition {\bf ({\rm I})} and concavity condition {\bf ({\rm II})}, if $K,L \in \mathscr{K}^n_o$ and $p\geq 1$, then
\begin{align}\label{lphtmi}
\mu_{p}(K, L) \geq \frac{1}{s} \mu(K)^{1-s} \mu(L)^{s}+\frac{1-p}{s p} \mu(K),
\end{align}
and
\begin{align}\label{lphtm2}
V_{\mu, p}(K, L) \geq p \mu(K)^{1-s} \mu(L)^{s}+(1-p) \mu(K).
\end{align}

\begin{lemma}\label{LEMBM}
 Let $\mu$ satisfy the homogeneity condition {\bf ({\rm I})} and concavity condition {\bf ({\rm II})}. If $K,L \in \mathscr{K}^n_o$, $\alpha,\beta\geq 0$ and $p\geq 1$, then
\begin{align*}
\frac{1+(\alpha+ \beta)(p-1)}{p}\mu(\alpha\cdot K +_p \beta\cdot L)^s\geq \alpha\mu(K)^{s}+ \beta\mu(L)^{s}.
\end{align*}
\end{lemma}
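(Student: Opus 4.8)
The plan is to reduce the claimed inequality to the Minkowski‐type estimate (\ref{lphtm2}) for the $L_p$ mixed volume $V_{\mu,p}$, expanded along an $L_p$‑Minkowski combination, exactly in the spirit of the classical derivation of Brunn–Minkowski from Minkowski's mixed volume inequality. First I would introduce the normalized body $M=\alpha\cdot K+_p\beta\cdot L$ and note, using (\ref{lpmz}), that $h(M,\cdot)^p=\alpha h(K,\cdot)^p+\beta h(L,\cdot)^p$, so that $\alpha h(K,u)^p = h(M,u)^p-\beta h(L,u)^p$ pointwise on $\mathbb{S}^{n-1}$. Multiplying by $dS_{\mu,p}(M,u)$ and integrating, then invoking (\ref{lphmt}) to identify $s\int_{\mathbb{S}^{n-1}}h(\,\cdot\,,u)^p\,dS_{\mu,p}(M,u)$ with $V_{\mu,p}(M,\cdot)$, gives the linear identity
\[
\alpha V_{\mu,p}(M,K)+\beta V_{\mu,p}(M,L)=V_{\mu,p}(M,M)=\mu(M).
\]

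Next I would apply the Minkowski inequality (\ref{lphtm2}) to each of the two mixed volumes on the left: $V_{\mu,p}(M,K)\ge p\,\mu(M)^{1-s}\mu(K)^s+(1-p)\mu(M)$ and similarly with $L$. Substituting these into the identity above and collecting terms yields
\[
\mu(M)\ \ge\ p\,\mu(M)^{1-s}\bigl(\alpha\mu(K)^s+\beta\mu(L)^s\bigr)+(1-p)(\alpha+\beta)\mu(M).
\]
Rearranging, $\bigl(1-(1-p)(\alpha+\beta)\bigr)\mu(M)\ge p\,\mu(M)^{1-s}\bigl(\alpha\mu(K)^s+\beta\mu(L)^s\bigr)$, i.e. $\bigl(1+(\alpha+\beta)(p-1)\bigr)\mu(M)^{s}\ge p\bigl(\alpha\mu(K)^s+\beta\mu(L)^s\bigr)$ after dividing by $p\,\mu(M)^{1-s}>0$, which is precisely the assertion of the lemma.

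The main obstacle is a bookkeeping/validity issue rather than a deep one: I must make sure the homogeneity hypothesis {\bf (I)} is genuinely what makes the step $s\int_{\mathbb{S}^{n-1}}h(\,\cdot\,,u)^p\,dS_{\mu,p}(M,u)=V_{\mu,p}(M,\cdot)$ and $V_{\mu,p}(M,M)=\mu(M)$ legitimate — these are exactly the identities recorded right after (\ref{lphmt}), so they are available — and that $\mu(M)>0$ so the final division is permitted (this holds because $K,L\in\mathscr{K}^n_o$ and at least one of $\alpha,\beta$ is positive, forcing $M\in\mathscr{K}^n_o$). One should also note that the concavity hypothesis {\bf (II)} enters only through the use of (\ref{lphtm2}); the degenerate case $\alpha=\beta=0$ is excluded as usual, and the cases where one of $\alpha,\beta$ vanishes reduce the statement to a triviality. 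No optimization over a parameter is needed here — unlike the classical proof, the $L_p$‑linearity of $h(M,\cdot)^p$ already delivers the sharp combination — so once the identity and the two applications of (\ref{lphtm2}) are in place the result follows by elementary algebra.
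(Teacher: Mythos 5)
Your proposal is correct and follows essentially the same route as the paper: both expand the $L_p$ mixed volume of $M=\alpha\cdot K+_p\beta\cdot L$ linearly in the second argument, apply the $L_p$ Minkowski inequality to each term, and then set the first argument equal to $M$ (the paper works with $\mu_p$ and (\ref{lphtmi}), you with $V_{\mu,p}=sp\,\mu_p$ and (\ref{lphtm2}), which differ only by the constant factor $sp$). The only cosmetic slip is that your displayed inequality after ``dividing by $p\,\mu(M)^{1-s}$'' still carries the factor $p$ on the right; one further division by $p$ gives the lemma exactly.
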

\begin{proof}
For $p\geq 1$, by (\ref{lmhmm}), (\ref{lpmz}) and (\ref{lphtmi}), we know that for any $M \in \mathscr{K}_{o}^{n}$
\begin{align*}
\mu_p(M, \alpha\cdot K+_p \beta\cdot L)
=& \alpha \mu_p(M, K)+ \beta \mu_p(M, L) \\
\geq&\frac{1}{s} \mu(M)^{1-s} [\alpha\mu(K)^{s}+ \beta\mu(L)^{s}]+\frac{(\alpha+ \beta)(1-p)}{s p}\mu(M),
\end{align*}
Let $M=\alpha\cdot K +_p \beta\cdot L$ in the above formula, we can deduce
\begin{align*}
\frac{1+(\alpha+ \beta)(p-1)}{p}\mu(\alpha\cdot K +_p \beta\cdot L)^s\geq \alpha\mu(K)^{s}+ \beta\mu(L)^{s}.
\end{align*}
This completes the proof.
\end{proof}
In particular, if $\alpha=\beta=1$, Lemma \ref{LEMBM} is exactly Wu's result (see Page 171 of \cite{WDH1}).

\subsection{$L_p$ dual mixed $\mu$ measure }\label{Sect 3}

\begin{definition}[\cite{LCC}]\label{DEF1}
Let $K,L \in \mathscr{S}^n_o$, $p\neq0$ and $r>0$. If $\mu$ satisfies the homogeneity condition {\bf ({\rm I})}, then the $L_p$ dual mixed $\mu$ measure can be defined by
\begin{align*}
\widetilde{V}_{\mu,p}(K, L)=sp\lim _{\varepsilon \rightarrow 0^{+}} \frac{\mu\left(K\widetilde{+}_{p}\varepsilon \circ L\right)-\mu(K)}{\varepsilon}. 
\end{align*}
\end{definition}

Next, we will give the integral expression of the $L_p$ dual mixed $\mu$ measure. 
\begin{lemma}[\cite{LCC}]\label{Lpdmmm}
Let $K,L \in \mathscr{S}^n_o$ and $p\neq0$. If $\mu$ satisfies the homogeneity condition {\bf ({\rm I})}, then
\begin{align}\label{LpHT}
\widetilde{V}_{\mu,p}(K, L)=s\int_{\mathbb{S}^{n-1}}\rho(K,u)^\frac{1-sp}{s}\rho(L,u)^pd\mu(u).
\end{align}
\end{lemma}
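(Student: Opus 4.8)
The plan is to reduce the defining limit in Definition \ref{DEF1} to the polar-coordinate formula (\ref{MUM}) and then differentiate under the integral sign, following the standard pattern for dual-mixed-volume identities. Fix $K,L\in\mathscr{S}^n_o$. By the definition (\ref{RA}) of the $L_p$-radial addition, for every $u\in\mathbb{S}^{n-1}$ and $\varepsilon>0$,
\[
\rho(K\widetilde{+}_p\varepsilon\circ L,u)^{1/s}=\bigl(\rho(K,u)^p+\varepsilon\rho(L,u)^p\bigr)^{1/(sp)},
\]
so substituting into (\ref{MUM}) gives
\[
\frac{\mu(K\widetilde{+}_p\varepsilon\circ L)-\mu(K)}{\varepsilon}
=s\int_{\mathbb{S}^{n-1}}\frac{\bigl(\rho(K,u)^p+\varepsilon\rho(L,u)^p\bigr)^{1/(sp)}-\rho(K,u)^{1/s}}{\varepsilon}\,d\mu(u).
\]
For each fixed $u$ the integrand tends, as $\varepsilon\to0^+$, to the $\varepsilon$-derivative at $\varepsilon=0$ of $\bigl(\rho(K,u)^p+\varepsilon\rho(L,u)^p\bigr)^{1/(sp)}$, which equals $\tfrac{1}{sp}\rho(K,u)^{(1-sp)/s}\rho(L,u)^p$ (using $p\bigl(\tfrac{1}{sp}-1\bigr)=\tfrac{1-sp}{s}$). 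Once the limit is moved inside the integral, multiplying by the factor $sp$ from Definition \ref{DEF1} produces exactly (\ref{LpHT}).

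The one step that needs care is justifying the interchange of $\lim_{\varepsilon\to0^+}$ with $\int_{\mathbb{S}^{n-1}}$. Since $K,L\in\mathscr{S}^n_o$, the functions $\rho(K,\cdot)$ and $\rho(L,\cdot)$ are continuous and bounded above and below by positive constants on the compact set $\mathbb{S}^{n-1}$, so $\rho(K,u)^p+\varepsilon\rho(L,u)^p$ remains in a fixed compact subinterval $[a,b]\subset(0,\infty)$, uniformly for $u\in\mathbb{S}^{n-1}$ and $\varepsilon\in(0,1]$ (only $p\neq0$ is used here, not the sign of $p$). On $[a,b]$ the map $t\mapsto t^{1/(sp)}$ is $C^1$, so by the mean value theorem the difference quotients in the last display are bounded by a constant independent of $u$ and $\varepsilon$; because $\mu$ induces a finite measure on $\mathbb{S}^{n-1}$ — apply (\ref{MUM}) to $B^n$ — the dominated convergence theorem applies. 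The same estimates show that $u\mapsto\rho(K,u)^{(1-sp)/s}\rho(L,u)^p$ is $\mu$-integrable over $\mathbb{S}^{n-1}$, so the right-hand side of (\ref{LpHT}) is finite and the limit in Definition \ref{DEF1} genuinely exists.

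I do not anticipate any obstacle beyond this routine dominated-convergence argument. The only structural point worth recording is that the homogeneity hypothesis {\bf ({\rm I})} is precisely what makes the polar representation (\ref{MUM}) available, and that the formula is valid for all $p\neq0$ because both the differentiation and the uniform bounds above are insensitive to the sign of $p$.
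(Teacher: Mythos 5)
Your argument is correct: the paper itself gives no proof of this lemma (it is quoted from \cite{LCC} and noted to be a special case of (16) in \cite{GRDXD}), and your derivation --- substituting the $L_p$-radial combination into the polar-coordinate formula (\ref{MUM}), differentiating under the integral at $\varepsilon=0$, and justifying the interchange by dominated convergence using the uniform positive bounds on $\rho(K,\cdot)$ and $\rho(L,\cdot)$ --- is exactly the standard route by which such dual-mixed-measure identities are established. The exponent bookkeeping ($p(\tfrac{1}{sp}-1)=\tfrac{1-sp}{s}$) and the cancellation of the normalizing factor $sp$ from Definition \ref{DEF1} are both right, so there is nothing to add.
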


Note that Lemma \ref{Lpdmmm} is a special case of Gardner, Hug, Weil, Xing and Ye (See (16) of \cite{GRDXD}). By Lemma \ref{Lpdmmm} and applying  H\"{o}lder's integral inequality and Minkowski's integral inequality, we will easily prove the following Minkowski inequality and Brunn-Minkowski inequality for $L_p$ dual mixed $\mu$ measure.

\begin{lemma}[\cite{LCC}]\label{DMMBS}
Let $\mu$ satisfy the homogeneity condition {\bf ({\rm I})} and $K,L \in \mathscr{S}^n_o$, if $0<p< n+\frac{1}{r}$, then
\begin{align}\label{mi}
\widetilde{V}_{\mu,p}(K, L)\leq \mu(K)^{1-sp}\mu(L)^{sp}.
\end{align}
If $p<0$ or $p> n+\frac{1}{r}$, then inequality (\ref{mi}) is reversed. Equality holds if and only if $K$ and $L$ are dilatates.
\end{lemma}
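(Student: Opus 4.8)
The plan is to combine the integral representation of $\widetilde{V}_{\mu,p}(K,L)$ from Lemma \ref{Lpdmmm} with the polar-coordinate formula (\ref{MUM}) for $\mu$, and then apply H\"older's integral inequality on $\mathbb{S}^{n-1}$ with respect to $\mu$ (whose spherical part, after passing to polar coordinates, is $\omega(u)\,du$). Recalling that $\frac1s=n+\frac1r$, I would rewrite the exponent of $\rho(K,\cdot)$ appearing in (\ref{LpHT}) as $\frac{1-sp}{s}=\frac1s-p$, split the integrand $\rho(K,u)^{\frac1s-p}\rho(L,u)^p$ across a H\"older pair, and choose the exponents $a=\frac{1}{1-sp}$ and $b=\frac1{sp}$, which satisfy $\frac1a+\frac1b=(1-sp)+sp=1$.

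The key algebraic observation that makes the bookkeeping work is that $\left(\tfrac1s-p\right)\cdot\tfrac{1}{1-sp}=\tfrac1s$ and $p\cdot\tfrac1{sp}=\tfrac1s$, so H\"older's inequality gives
\begin{align*}
\int_{\mathbb{S}^{n-1}}\rho(K,u)^{\frac{1-sp}{s}}\rho(L,u)^p\,d\mu(u)\le\left(\int_{\mathbb{S}^{n-1}}\rho(K,u)^{\frac1s}\,d\mu(u)\right)^{1-sp}\left(\int_{\mathbb{S}^{n-1}}\rho(L,u)^{\frac1s}\,d\mu(u)\right)^{sp}.
\end{align*}
Multiplying by $s$ and substituting (\ref{MUM}) in the form $\int_{\mathbb{S}^{n-1}}\rho(M,u)^{1/s}\,d\mu(u)=\mu(M)/s$, the powers of $s$ telescope, since $s\cdot s^{-(1-sp)}\cdot s^{-sp}=1$, and one obtains exactly $\widetilde{V}_{\mu,p}(K,L)\le\mu(K)^{1-sp}\mu(L)^{sp}$. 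This step uses the standard H\"older inequality, which requires $a,b>1$, i.e. $0<sp<1$, i.e. $0<p<n+\frac1r$; finiteness of all integrals is automatic because $\rho(K,\cdot)$ and $\rho(L,\cdot)$ are bounded between positive constants on the compact sphere and $\mu$ restricted to $\mathbb{S}^{n-1}$ is finite.

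For the remaining ranges I would invoke the reverse H\"older inequality. When $p<0$ we have $sp<0$, so $b=\frac1{sp}<0$ while $a=\frac1{1-sp}\in(0,1)$; when $p>n+\frac1r$ we have $sp>1$, so $a<0$ while $b\in(0,1)$. In both cases exactly one of the "conjugate exponents" lies in $(0,1)$ and the other is negative, which is precisely the hypothesis of the reverse H\"older inequality, so every inequality above is reversed and $\widetilde{V}_{\mu,p}(K,L)\ge\mu(K)^{1-sp}\mu(L)^{sp}$. Finally, equality in (reverse) H\"older holds if and only if $\rho(K,\cdot)^{1/s}$ and $\rho(L,\cdot)^{1/s}$ are proportional $\mu$-almost everywhere on $\mathbb{S}^{n-1}$; since $\omega>0$ everywhere the spherical measure has full support, and since both radial functions are continuous and positive this upgrades to $\rho(K,\cdot)=c\,\rho(L,\cdot)$ on all of $\mathbb{S}^{n-1}$ for some $c>0$, i.e.\ $K$ and $L$ are dilatates; the converse is immediate. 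The only points needing a little care are the justification of the reverse H\"older inequality in the non-standard exponent ranges and this last passage from a.e.\ proportionality to genuine dilatation, but neither is a serious obstacle.
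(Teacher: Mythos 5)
Your proof is correct and follows exactly the route the paper indicates for this lemma (which it cites from \cite{LCC} with the remark that it follows from the integral representation (\ref{LpHT}) together with H\"older's inequality): you apply H\"older with conjugate exponents $1/(1-sp)$ and $1/(sp)$ to the integrand $\rho(K,\cdot)^{1/s-p}\rho(L,\cdot)^{p}$, identify the resulting integrals via (\ref{MUM}), and use the reverse H\"older inequality in the complementary exponent ranges. The exponent bookkeeping, the telescoping of the powers of $s$, and the equality analysis (proportionality of continuous positive radial functions on a sphere carrying a measure of full support) are all in order.
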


\begin{lemma}[\cite{LCC}]\label{rbmi}
Let $\mu$ satisfy the homogeneity condition {\bf ({\rm I})} and $K,L \in \mathscr{S}^n_o$, if $0<p< n+\frac{1}{r}$ and $\alpha,\beta>0$, then
\begin{align}\label{bmi}
\mu(\alpha \circ K\widetilde{+}_{p}\beta \circ L)^{sp} \leq \alpha\mu(K)^{sp} + \beta \mu(L)^{sp}.
\end{align}
If $p<0$ or $p> n+\frac{1}{r}$, then inequality (\ref{bmi}) is reversed. Equality holds if and only if $K$ and $L$ are dilatates.
\end{lemma}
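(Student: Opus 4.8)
The plan is to derive the Brunn--Minkowski-type inequality (\ref{bmi}) from the Minkowski-type inequality (\ref{mi}) of Lemma \ref{DMMBS}, by writing the measure of an $L_p$-radial combination as a linear combination of $L_p$ dual mixed $\mu$-measures. Concretely, I would set $Q=\alpha\circ K\widetilde{+}_{p}\beta\circ L$, which lies in $\mathscr{S}^n_o$ since $K,L\in\mathscr{S}^n_o$ and $\alpha,\beta>0$. Taking $L=K$ in the integral formula (\ref{LpHT}) and noting that the exponents $\frac{1-sp}{s}$ and $p$ add up to $\frac{1}{s}$, the polar-coordinate formula (\ref{MUM}) gives the identity $\widetilde{V}_{\mu,p}(Q,Q)=\mu(Q)$. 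Inserting the defining relation $\rho(Q,\cdot)^p=\alpha\rho(K,\cdot)^p+\beta\rho(L,\cdot)^p$ from (\ref{RA}) into
\[
\widetilde{V}_{\mu,p}(Q,Q)=s\int_{\mathbb{S}^{n-1}}\rho(Q,u)^{\frac{1-sp}{s}}\,\rho(Q,u)^p\,d\mu(u),
\]
and using linearity of the integral together with (\ref{LpHT}) again, one obtains
\[
\mu(Q)=\alpha\,\widetilde{V}_{\mu,p}(Q,K)+\beta\,\widetilde{V}_{\mu,p}(Q,L).
\]

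Next I would apply Lemma \ref{DMMBS} to each of the two terms. When $0<p<n+\frac{1}{r}$ one has $0<sp<1$, hence $\widetilde{V}_{\mu,p}(Q,K)\le\mu(Q)^{1-sp}\mu(K)^{sp}$ and $\widetilde{V}_{\mu,p}(Q,L)\le\mu(Q)^{1-sp}\mu(L)^{sp}$; substituting these, factoring out $\mu(Q)^{1-sp}>0$, and dividing by this positive quantity yields exactly (\ref{bmi}). For $p<0$ or $p>n+\frac{1}{r}$ the inequality in Lemma \ref{DMMBS} reverses, but since $\mu(Q)^{1-sp}$ is still a positive real number, dividing by it does not alter the direction, so the identical computation produces the reversed form of (\ref{bmi}).

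For the equality statement, equality throughout the chain forces equality in both applications of Lemma \ref{DMMBS}, so $Q$ must be a dilatate of $K$ and of $L$; hence $K$ and $L$ are dilatates. Conversely, if $L=cK$ for some $c>0$, then $\rho(Q,\cdot)^p=(\alpha+\beta c^p)\rho(K,\cdot)^p$, i.e. $Q=(\alpha+\beta c^p)^{1/p}K$, and the $1/s$-homogeneity of $\mu$ makes both sides of (\ref{bmi}) equal to $(\alpha+\beta c^p)\mu(K)^{sp}$, so equality holds. I expect the only delicate point to be the bookkeeping of the sign of $1-sp$ and the resulting direction of the inequalities across the three parameter ranges $0<p<n+\frac{1}{r}$, $p<0$, and $p>n+\frac{1}{r}$; everything else reduces to direct substitution into (\ref{RA}), (\ref{MUM}) and (\ref{LpHT}).
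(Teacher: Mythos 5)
Your argument is correct: the identity $\mu(Q)=\alpha\,\widetilde{V}_{\mu,p}(Q,K)+\beta\,\widetilde{V}_{\mu,p}(Q,L)$ for $Q=\alpha\circ K\widetilde{+}_{p}\beta\circ L$ follows from (\ref{RA}), (\ref{LpHT}) and the exponent bookkeeping $\frac{1-sp}{s}+p=\frac{1}{s}$, and feeding Lemma \ref{DMMBS} into each term and cancelling $\mu(Q)^{1-sp}>0$ gives (\ref{bmi}) with the right direction in all three parameter ranges; the equality discussion is also sound. However, this is a genuinely different route from the one the paper indicates. The paper (following \cite{LCC}) obtains the Minkowski inequality of Lemma \ref{DMMBS} from H\"older's integral inequality and then proves (\ref{bmi}) \emph{directly} from Minkowski's integral inequality: writing $\mu(Q)^{sp}=\bigl(s\int_{\mathbb{S}^{n-1}}(\rho(Q,u)^{p})^{1/(sp)}\,d\mu(u)\bigr)^{sp}$ and applying the triangle inequality for the $L^{1/(sp)}(\mu)$-norm to $\alpha\rho(K,\cdot)^{p}+\beta\rho(L,\cdot)^{p}$, with the reversal when $1/(sp)<1$ or $sp<0$. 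Your derivation is the classical ``Minkowski's first inequality implies Brunn--Minkowski'' scheme; its advantage is that the equality characterization (dilatates) is inherited verbatim from Lemma \ref{DMMBS} rather than from the equality case of Minkowski's integral inequality, and it reuses machinery already in place. The direct route is self-contained and does not presuppose Lemma \ref{DMMBS}, which matters if one wants the two lemmas to be logically independent. Both proofs are valid; just be aware that your chain makes Lemma \ref{rbmi} depend on Lemma \ref{DMMBS}, including its equality conditions.
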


\section{General $L_p$ $\mu$-projection bodies and General $L_p$ $\mu$-centroid bodies}\label{SEC3}

In this part, we will introduce the concepts of nonsymmetric $L_p$ $\mu$-projection body and nonsymmetric $L_p$ $\mu$-centroid body, and then study the corresponding measure comparison problem according to their properties.

The nonsymmetric $L_p$ cosine transform of $\nu$ is defined by Haberl and Schuster \cite{HSFE} as follows: For each finite Borel measure $\nu$ on $\mathbb{S}^{n-1}$, a continuous function $\mathcal{C}_p^+\nu$ on $\mathbb{S}^{n-1}$, the nonsymmetric $L_p$ cosine transform of $\nu$, is defined by
\begin{align*}(\mathcal{C}_p^+\nu)(u) = c_{n,p}\int_{\mathbb{S}^{n-1}} (u \cdot v)_+^p d\nu(v), \qquad u \in \mathbb{S}^{n-1}, \end{align*}
where $(u\cdot v)_+=\max\{u\cdot v, 0\}$, and
\begin{align*}
c_{n,p}=\frac{\Gamma\left(\mbox{$\frac{n+p}{2}$}\right)}{\pi^\frac{n-1}{2}\Gamma\left(\frac{1+p}{2}\right)}.
\end{align*}
According to the nonsymmetric $L_p$ cosine transform, we give the following concept of nonsymmetric $L_p$ $\mu$-projection body by: Let $\mu$ satisfy the homogeneity condition {\bf ({\rm I})}, $K \in \mathscr{K}^n_{\mathrm{o}}$ and  $p \geq 1$, then the nonsymmetric $L_p$ $\mu$-projection body $\Pi_{\mu,p}^+K$ of $K$, is the convex body defined by
\begin{align*}
h(\Pi_{\mu,p}^+K,\cdot)^p=\mathcal{C}_p^+S_{\mu,p}(K,\cdot),
\end{align*}
i.e., for $u \in \mathbb{S}^{n-1}$,
\begin{align*}
h(\Pi_{\mu,p}^+K,u)^p=c_{n,p}\int_{\mathbb{S}^{n-1}} (u \cdot v)_+^p dS_{\mu,p}(K,v).
\end{align*}
 Let $(u \cdot v)_{-}=((-u) \cdot v)_{+}=\max \{(-u) \cdot v, 0\}$. For $K \in
\mathscr{K}^n_{\mathrm{o}}$ and  $p \geq 1$, we define $\Pi_{\mu,p}^-K$ by 
\begin{align*}
h(\Pi_{\mu,p}^-K,u)^p=c_{n,p}\int_{\mathbb{S}^{n-1}} (u \cdot v)_-^p dS_{\mu,p}(K,v),
\end{align*}
for $u \in \mathbb{S}^{n-1}$. Thus, we have
\begin{align*}
h(-\Pi_{\mu,p}^+K,u)^p&=h(\Pi_{\mu,p}^+K,-u)^p\\
&=c_{n,p}\int_{\mathbb{S}^{n-1}} (-u \cdot v)_+^p dS_{\mu,p}(K,v)\\
&=c_{n,p}\int_{\mathbb{S}^{n-1}} (u \cdot v)_-^p dS_{\mu,p}(K,v)\\
&=h(\Pi_{\mu,p}^-K,u)^p,
\end{align*}
that is,
\begin{align}\label{tzfgx}
\Pi_{\mu,p}^-K=-\Pi_{\mu,p}^+K. 
\end{align}

Let $\mu$ satisfy the homogeneity condition {\bf ({\rm I})}, $K \in \mathscr{S}^n_o$ and  $p \geq 1$, then the nonsymmetric $L_p$ $\mu$-centroid body $\Gamma_{\mu,p}^+K$ of $K$ is a convex body defined by
\begin{align}\label{zfz}
h(\Gamma_{\mu,p}^+K,u)^p=\frac{2}{c_{n,p}\mu(K)}\int_K (u \cdot x )_+^p d\mu(x),  \qquad u \in \mathbb{S}^{n-1}.
\end{align}
Using the polar coordinates, it is easy to verify that for all $u \in \mathbb{S}^{n-1}$,
\begin{align}\label{jzfz}
h(\Gamma_{\mu,p}^+K,u)^p=\frac{2}{(p+\frac{1}{s})c_{n,p}\mu(K)}\int_{\mathbb{S}^{n-1}} (u \cdot v )_+^p\rho(K,v)^{p+\frac{1}{s}}d \mu(v).
\end{align}
Similarly, we also define $\Gamma_{\mu,p}^-K$ by
\begin{align}\label{ffz}
h(\Gamma_{\mu,p}^-K,u)^p=\frac{2}{c_{n,p}\mu(K)}\int_K (u \cdot x )_-^p d\mu(x),  \qquad u \in \mathbb{S}^{n-1}.
\end{align}
Obviously,  for $u \in \mathbb{S}^{n-1}$, from (\ref{zfz}) and (\ref{ffz}),we easily obtain
\begin{align*}
h(-\Gamma_{\mu,p}^+K,u)^p&=h(\Gamma_{\mu,p}^+K,-u)^p=\frac{2}{c_{n,p}\mu(K)}\int_K ((-u) \cdot x )_+^p d\mu(x) \\
&=\frac{2}{c_{n,p}\mu(K)}\int_K (u \cdot x )_-^p d\mu(x)\\
&=h(\Gamma_{\mu,p}^-K,u)^p,
\end{align*}
i.e.,
\begin{align}\label{zxtds}
\Gamma_{\mu,p}^-K =- \Gamma_{\mu,p}^+K.
\end{align}

In 2006, Ludwig \cite{LUDA} introduced a function $\psi_{\tau}:
\mathbb{R} \rightarrow [0,\infty)$ by
\begin{align*}\psi_{\tau}(t)=|t|+\tau t,\qquad \tau \in [-1,1].
\end{align*}
Let $\mu$ satisfy the homogeneity condition {\bf ({\rm I})}, $K \in \mathscr{K}_{\mathrm{o}}^n$,  $p \geq 1$ and $\tau \in [-1,1]$, the general $L_p$ $\mu$-projection body $\Pi_{\mu,p}^{\tau}K
\in \mathscr{K}_{\mathrm{o}}^n$ of $K$ is defined by  
\begin{align} \label{ghmt}
h(\Pi_{\mu,p}^{\tau}K,u)^p=c_{n,p}(\tau)\int_{\mathbb{S}^{n-1}}\psi_{\tau}(u\cdot
v)^p dS_{\mu,p}(K,v), \qquad u \in \mathbb{S}^{n-1},
\end{align}
where
\begin{align*}
c_{n,p}(\tau)=\frac{c_{n,p}}{(1+\tau)^p+(1-\tau)^p}. 
\end{align*}
Usually, we use $\Pi_{\mu,p}^{\tau,\ast}K$ to represent the polar body of $\Pi_{\mu,p}^{\tau}K$ instead of $(\Pi_{\mu,p}^{\tau}K)^\ast$. 

Since $\mu$ satisfies the homogeneity condition {\bf ({\rm I})}, for $\lambda>0$, from (\ref{lphmt2}) and (\ref{ghmt}), it follows that
\begin{align*}
\Pi_{\mu,p}^{\tau}(\lambda K)=\lambda^\frac{1-sp}{sp}\Pi_{\mu,p}^{\tau}K. 
\end{align*}

For all $u,v\in \mathbb{S}^{n-1}$, it is easy to verify that
\begin{align}\label{csfc}
\psi_{\tau}(u\cdot
v)^p=(1+\tau)^p(u \cdot v )_+^p +(1-\tau)^p(u \cdot v )_-^p . 
\end{align}

From (\ref{csfc}) and the definition of $\Pi_{\mu,p}^{\pm}K$, we obtain
\begin{align}\label{tytmz}
\Pi_{\mu,p}^{\tau}K=g_1(\tau)\cdot\Pi_{\mu,p}^+K +_p g_2(\tau)\cdot\Pi_{\mu,p}^-K.
\end{align}
where 
\begin{align}\label{g1} 
g_1(\tau)=\frac{(1+\tau)^p}{(1+\tau)^p+(1-\tau)^p}, \qquad g_2(\tau)=\frac{(1-\tau)^p}{(1+\tau)^p+(1-\tau)^p}.
\end{align}
Obviously, 
\begin{align}\label{g2}
g_1(\tau)+g_2(\tau)=1, \qquad g_1(-\tau)=g_2(\tau), \qquad g_1(\tau)=g_2(-\tau).
\end{align}
Note that, let $\tau=\pm1$, then 
\begin{align*}
\Pi_{\mu,p}^{+1}K=\Pi_{\mu,p}^{+}K, \qquad \Pi_{\mu,p}^{-1}K=\Pi_{\mu,p}^{-}K.
\end{align*}
If $\tau=0$, we define that $\Pi_{\mu,p}^{0}K=\Pi_{\mu,p}K$, by (\ref{tytmz}) and (\ref{g1}), then
\begin{align}\label{tytzf}
\Pi_{\mu,p}K=\frac{1}{2}\cdot\Pi_{\mu,p}^{+}K+_p \frac{1}{2}\cdot \Pi_{\mu,p}^{-}K.
\end{align}

Let $\mu$ satisfy the homogeneity condition {\bf ({\rm I})}, $K \in \mathscr{S}^n_o$,  $p \geq 1$ and $\tau \in [-1,1]$, the general $L_p$ $\mu$-centroid body $\Gamma_{\mu,p}^{\tau}K\in \mathscr{K}^n_{\mathrm{o}}$ of $K$ is defined by
\begin{align}\label{ghmz}
h(\Gamma_{\mu,p}^{\tau}K,u)^p=&\frac{2}{\alpha_{n,p}(\tau)\mu(K)}\int_K \psi_{\tau}(u\cdot
x)^pd\mu(x)\\
\nonumber=&\frac{2}{\alpha_{n,p}(\tau)(p+\frac{1}{s})\mu(K)}\int_{\mathbb{S}^{n-1}} \psi_{\tau}(u\cdot
v)^p\rho(K,v)^{p+\frac{1}{s}}d\mu(v),
\end{align}
for $u \in \mathbb{S}^{n-1}$, and
\begin{align*}
\alpha_{n,p}(\tau)=c_{n,p}[(1+\tau)^p+(1-\tau)^p]. 
\end{align*}
Then for every $\tau \in [-1,1]$, by (\ref{ghmz}), (\ref{csfc}) and (\ref{g1}), we can obtain
\begin{align} \label{mtaupm}
\Gamma_{\mu,p}^{\tau}K=g_1(\tau)\cdot
\Gamma_{\mu,p}^+K +_p g_2(\tau)\cdot
\Gamma_{\mu,p}^-K.
\end{align}
Usually, we use $\Gamma_{\mu,p}^{\tau,\ast}K$ to represent the polar body of $\Gamma_{\mu,p}^{\tau}K$ other than $(\Gamma_{\mu,p}^{\tau}K)^\ast$.

Since $\mu$ satisfies the homogeneity condition {\bf ({\rm I})}, for $\lambda>0$, from (\ref{ghmz}), it follows that
\begin{align*}
\Gamma_{\mu,p}^{\tau}(\lambda K)=\lambda\Gamma_{\mu,p}^{\tau}K. 
\end{align*}

Note that, let $\tau=\pm1$, then 
\begin{align*}
\Gamma_{\mu,p}^{+1}K=\Gamma_{\mu,p}^{+}K, \qquad \Gamma_{\mu,p}^{-1}K=\Gamma_{\mu,p}^{-}K.
\end{align*}
If $\tau=0$, we define that $\Gamma_{\mu,p}^{0}K=\Gamma_{\mu,p}K$, then 
\begin{align}\label{tytzfc}
\Gamma_{\mu,p}K=\frac{1}{2}\cdot\Gamma_{\mu,p}^{+}K+_p \frac{1}{2}\cdot \Gamma_{\mu,p}^{-}K.
\end{align}

Next, the measure comparison theorems of the general $L_p$ $\mu$-projection body and the general $L_p$ $\mu$-centroid body are obtained. Before that, we give the following properties.

\begin{proposition}\label{prop1}
Let $\mu$ satisfy the homogeneity condition {\bf ({\rm I})}, if $K,L \in \mathscr{K}^n_{\mathrm{o}}$, $ p\geq 1$ and $\tau \in [-1,1]$, then 
\begin{align*}
V_{\mu,p}(K,\Pi_{\mu,p}^{\tau}L)=V_{\mu,p}(L,\Pi_{\mu,p}^{\tau}K).
\end{align*}
\end{proposition}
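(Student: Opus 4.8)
The plan is to reduce both sides of the asserted identity to one and the same symmetric double integral over $\mathbb{S}^{n-1}\times\mathbb{S}^{n-1}$, and then to close the argument by Fubini's theorem together with the symmetry of the standard inner product.

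First I would invoke the integral representation (\ref{lphmt}) of the $L_p$ mixed volume with respect to $\mu$, applied to the pair $(K,\Pi_{\mu,p}^{\tau}L)$, to write
\[
V_{\mu,p}(K,\Pi_{\mu,p}^{\tau}L)=s\int_{\mathbb{S}^{n-1}}h(\Pi_{\mu,p}^{\tau}L,u)^p\,dS_{\mu,p}(K,u).
\]
Inserting the defining formula (\ref{ghmt}) for $h(\Pi_{\mu,p}^{\tau}L,\cdot)^p$ then turns this into the iterated integral
\[
V_{\mu,p}(K,\Pi_{\mu,p}^{\tau}L)=s\,c_{n,p}(\tau)\int_{\mathbb{S}^{n-1}}\int_{\mathbb{S}^{n-1}}\psi_{\tau}(u\cdot v)^p\,dS_{\mu,p}(L,v)\,dS_{\mu,p}(K,u).
\]
Since $S_{\mu,p}(K,\cdot)$ and $S_{\mu,p}(L,\cdot)$ are finite Borel measures on the compact set $\mathbb{S}^{n-1}$ and the kernel $(u,v)\mapsto\psi_{\tau}(u\cdot v)^p$ is continuous, hence bounded, on $\mathbb{S}^{n-1}\times\mathbb{S}^{n-1}$, Fubini's theorem permits interchanging the order of integration. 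Using the symmetry $u\cdot v=v\cdot u$, which gives $\psi_{\tau}(u\cdot v)^p=\psi_{\tau}(v\cdot u)^p$, and then reading (\ref{ghmt}) backwards for $\Pi_{\mu,p}^{\tau}K$ followed by (\ref{lphmt}) applied to the pair $(L,\Pi_{\mu,p}^{\tau}K)$, I would arrive at
\[
V_{\mu,p}(K,\Pi_{\mu,p}^{\tau}L)=s\int_{\mathbb{S}^{n-1}}h(\Pi_{\mu,p}^{\tau}K,v)^p\,dS_{\mu,p}(L,v)=V_{\mu,p}(L,\Pi_{\mu,p}^{\tau}K),
\]
which is the claimed identity.

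There is no serious obstacle here: the proposition is essentially a Fubini computation, and the only point that deserves a word of justification is the interchange of the two integrations, which is immediate because the surface $\mu$-area measures are finite and the kernel $\psi_{\tau}(u\cdot v)^p$ is continuous on a compact product space. The homogeneity hypothesis {\bf ({\rm I})} is used only through (\ref{lphmt}), that is, to guarantee that $V_{\mu,p}(\cdot,\cdot)$ admits the stated integral form, and the normalising constant $c_{n,p}(\tau)$ plays no role beyond occurring identically on both sides.
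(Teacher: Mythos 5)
Your argument is correct and follows exactly the same route as the paper's own proof: apply (\ref{lphmt}), insert (\ref{ghmt}), swap the order of integration by Fubini using the symmetry of the inner product, and read the two formulas backwards. Your added remark justifying the Fubini interchange (finite measures, continuous bounded kernel on a compact product) is a small but welcome refinement that the paper leaves implicit.
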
 
\begin{proof}
From (\ref{lphmt}) together with the (\ref{ghmt}) and Fubini's theorem, we obtain 
\begin{align*}
V_{\mu,p}(K,\Pi_{\mu,p}^{\tau}L)&=s\int_{\mathbb{S}^{n-1}} h(\Pi_{\mu,p}^{\tau}L,u)^p d S_{\mu, p}(K, u) \\
&=sc_{n,p}(\tau)\int_{\mathbb{S}^{n-1}}\int_{\mathbb{S}^{n-1}}\psi_{\tau}(u\cdot
v)^p dS_{\mu,p}(L,v) d S_{\mu, p}(K, u),\\
&=sc_{n,p}(\tau)\int_{\mathbb{S}^{n-1}}\int_{\mathbb{S}^{n-1}}\psi_{\tau}(v\cdot
u)^p d S_{\mu, p}(K, u)dS_{\mu,p}(L,v) ,\\
&=s\int_{\mathbb{S}^{n-1}} h(\Pi_{\mu,p}^{\tau}K,v)^p d S_{\mu, p}(L, v) \\
&=V_{\mu,p}(L,\Pi_{\mu,p}^{\tau}K).
\end{align*}

\end{proof}

\begin{proposition} \label{durch} 
Let $\mu$ satisfy the homogeneity condition {\bf ({\rm I})}, if $K \in \mathscr{K}^n_{\mathrm{o}}$, $L \in \mathscr{S}^n_o$, $p\geq 1$ and $\tau \in [-1,1]$, then
\begin{align*}
V_{\mu,p}(K,\Gamma_{\mu,p}^{\tau}L)=\frac{2}{c_{n,p}(\tau)\alpha_{n,p}(\tau)(p+\frac{1}{s})\mu(L)}\widetilde{V}_{\mu,-p}(L, \Pi_{\mu,p}^{\tau,*}K). 
\end{align*}
\end{proposition}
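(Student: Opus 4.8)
The plan is to unfold both sides into integrals over $\mathbb{S}^{n-1}$ against $d\mu$ and match them. First I would write the left-hand side via the integral representation \eqref{lphmt}, namely
\[
V_{\mu,p}(K,\Gamma_{\mu,p}^{\tau}L)=s\int_{\mathbb{S}^{n-1}}h(\Gamma_{\mu,p}^{\tau}L,u)^p\,dS_{\mu,p}(K,u),
\]
and then insert the spherical form of the support function of the general $L_p$ $\mu$-centroid body from the second line of \eqref{ghmz},
\[
h(\Gamma_{\mu,p}^{\tau}L,u)^p=\frac{2}{\alpha_{n,p}(\tau)\left(p+\frac{1}{s}\right)\mu(L)}\int_{\mathbb{S}^{n-1}}\psi_{\tau}(u\cdot v)^p\rho(L,v)^{p+\frac{1}{s}}\,d\mu(v).
\]
This produces a double integral over $\mathbb{S}^{n-1}\times\mathbb{S}^{n-1}$ with integrand $\psi_{\tau}(u\cdot v)^p\rho(L,v)^{p+\frac{1}{s}}$ against $d\mu(v)\,dS_{\mu,p}(K,u)$.

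Next I would apply Fubini's theorem — legitimate since $\omega$ is continuous, so both $\mu$ and $S_{\mu,p}(K,\cdot)$ are finite on $\mathbb{S}^{n-1}$ and the integrand is bounded — to carry out the $u$-integration first. Using the symmetry $\psi_{\tau}(u\cdot v)=\psi_{\tau}(v\cdot u)$ together with the definition \eqref{ghmt} of $\Pi_{\mu,p}^{\tau}K$, the inner integral collapses to $\int_{\mathbb{S}^{n-1}}\psi_{\tau}(v\cdot u)^p\,dS_{\mu,p}(K,u)=c_{n,p}(\tau)^{-1}h(\Pi_{\mu,p}^{\tau}K,v)^p$, whence
\[
V_{\mu,p}(K,\Gamma_{\mu,p}^{\tau}L)=\frac{2s}{c_{n,p}(\tau)\alpha_{n,p}(\tau)\left(p+\frac{1}{s}\right)\mu(L)}\int_{\mathbb{S}^{n-1}}\rho(L,v)^{p+\frac{1}{s}}h(\Pi_{\mu,p}^{\tau}K,v)^p\,d\mu(v).
\]

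Finally I would identify the remaining integral as an $L_p$ dual mixed $\mu$-measure. By the polar relations \eqref{SRF} one has $h(\Pi_{\mu,p}^{\tau}K,v)^p=\rho(\Pi_{\mu,p}^{\tau,*}K,v)^{-p}$, and since $\frac{1-s(-p)}{s}=p+\frac{1}{s}$, formula \eqref{LpHT} of Lemma \ref{Lpdmmm} applied with parameter $-p$ gives precisely
\[
\widetilde{V}_{\mu,-p}(L,\Pi_{\mu,p}^{\tau,*}K)=s\int_{\mathbb{S}^{n-1}}\rho(L,v)^{p+\frac{1}{s}}\rho(\Pi_{\mu,p}^{\tau,*}K,v)^{-p}\,d\mu(v).
\]
Substituting this back yields the claimed identity. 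The only points that need care are the bookkeeping of the constants $c_{n,p}(\tau)$, $\alpha_{n,p}(\tau)$ and $\left(p+\frac{1}{s}\right)$, and checking that the exponent $p+\frac{1}{s}$ arising from the polar-coordinate form of the centroid body coincides with $\frac{1-s(-p)}{s}$ in the dual mixed measure; since it does, I do not anticipate any genuine obstacle beyond this routine algebra and the (standard) justification of Fubini's theorem.
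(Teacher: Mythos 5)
Your proposal is correct and follows essentially the same route as the paper's own proof: expand $V_{\mu,p}(K,\Gamma_{\mu,p}^{\tau}L)$ via (\ref{lphmt}) and the spherical form of (\ref{ghmz}), swap the order of integration by Fubini, recognize the inner integral as $c_{n,p}(\tau)^{-1}h(\Pi_{\mu,p}^{\tau}K,v)^p$ via (\ref{ghmt}), and convert to the dual mixed $\mu$-measure through (\ref{SRF}) and (\ref{LpHT}). Your explicit check that the exponent $p+\tfrac{1}{s}$ agrees with $\tfrac{1-s(-p)}{s}$ is the same bookkeeping the paper performs implicitly, so there is nothing to add.
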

\begin{proof} 
Since $K \in \mathscr{K}^n_{\mathrm{o}}$ and $L \in
\mathscr{S}^n_o$, by (\ref{lphmt}) and (\ref{ghmz}), we obtain
\begin{align*}
V_{\mu,p}(K,\Gamma_{\mu,p}^{\tau}L)&=s\int_{\mathbb{S}^{n-1}} h(\Gamma_{\mu,p}^{\tau}L,u)^p d S_{\mu, p}(K, u) \\
&=\frac{2s}{\alpha_{n,p}(\tau)(p+\frac{1}{s})\mu(L)}\int_{\mathbb{S}^{n-1}}\int_{\mathbb{S}^{n-1}} \psi_{\tau}(u\cdot
v)^p\rho(L,v)^{p+\frac{1}{s}}d\mu(v) d S_{\mu, p}(K, u),\end{align*}
from Fubini's theorem, (\ref{ghmt}), (\ref{SRF}) and (\ref{LpHT}), it follow that
\begin{align*}
V_{\mu,p}(K,\Gamma_{\mu,p}^{\tau}L)&=\frac{2s}{\alpha_{n,p}(\tau)(p+\frac{1}{s})\mu(L)}\int_{\mathbb{S}^{n-1}}\int_{\mathbb{S}^{n-1}} \psi_{\tau}(v\cdot
u)^pd S_{\mu, p}(K, u)\rho(L,v)^{p+\frac{1}{s}}d\mu(v) \\
&=\frac{2s}{c_{n,p}(\tau)\alpha_{n,p}(\tau)(p+\frac{1}{s})\mu(L)}\int_{\mathbb{S}^{n-1}}\rho(L,v)^{p+\frac{1}{s}}\rho(\Pi_{\mu,p}^{\tau,*}K, v)^{-p}d\mu(v) \\
&=\frac{2}{c_{n,p}(\tau)\alpha_{n,p}(\tau)(p+\frac{1}{s})\mu(L)}\widetilde{V}_{\mu,-p}(L, \Pi_{\mu,p}^{\tau,*}K).
\end{align*}
This leads to the desired result.
\end{proof}

Next, let $\mathcal{P}^{\tau}_{\mu,p}$ and $\mathcal{P}^{\tau,*}_{\mu,p}$ denotes the set of general $L_p$ $\mu$-projection bodies and their polar bodies, respectively. Let $\mathcal{C}^{\tau}_{\mu,p}$ and $\mathcal{C}^{\tau,*}_{\mu,p}$ denotes the set of general $L_p$ $\mu$-centroid bodies and their polar bodies, respectively. 

\begin{theorem}\label{tyttjb}
 Suppose that $\mu$ satisfies the homogeneity condition {\bf ({\rm I})} and concavity condition {\bf ({\rm II})}, let $K \in \mathscr{K}^n_{\mathrm{o}}$, $ p\geq 1$ and $\tau \in [-1,1]$. 

(a) If $\Pi_{\mu,p}^{\tau}K\subseteq \Pi_{\mu,p}^{\tau}L$ and $L\in\mathcal{P}^{\tau}_{\mu,p}$, then
\begin{align*}
1\geq p \left(\frac{\mu(K)}{\mu(L)}\right)^{1-s}+(1-p)\frac{\mu(K)}{\mu(L)}.
\end{align*}

(b) If $\Pi_{\mu,p}^{\tau,\ast}K\supseteq \Pi_{\mu,p}^{\tau,\ast}L$ and $L\in\mathcal{C}^{\tau}_{\mu,p}$, then
\begin{align*}
1\geq p \left(\frac{\mu(K)}{\mu(L)}\right)^{1-s}+(1-p)\frac{\mu(K)}{\mu(L)}.
\end{align*}
\end{theorem}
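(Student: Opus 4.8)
The plan is to prove parts (a) and (b) in parallel, since both reduce to the same inequality via the Minkowski-type inequalities established in Section \ref{SEC2}. For part (a), I would start from the hypothesis $\Pi_{\mu,p}^{\tau}K\subseteq\Pi_{\mu,p}^{\tau}L$, which by the monotonicity of support functions and (\ref{lphmt}) gives $V_{\mu,p}(K,\Pi_{\mu,p}^{\tau}K)\leq V_{\mu,p}(K,\Pi_{\mu,p}^{\tau}L)$. Now apply Proposition \ref{prop1} to the right-hand side to swap the roles of $K$ and $L$, obtaining $V_{\mu,p}(K,\Pi_{\mu,p}^{\tau}L)=V_{\mu,p}(L,\Pi_{\mu,p}^{\tau}K)$. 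Since $L\in\mathcal{P}^{\tau}_{\mu,p}$, write $L=\Pi_{\mu,p}^{\tau}M$ for some $M\in\mathscr{K}^n_{\mathrm{o}}$; then a second application of Proposition \ref{prop1} rewrites $V_{\mu,p}(L,\Pi_{\mu,p}^{\tau}K)=V_{\mu,p}(\Pi_{\mu,p}^{\tau}M,\Pi_{\mu,p}^{\tau}K)=V_{\mu,p}(\Pi_{\mu,p}^{\tau}K,\Pi_{\mu,p}^{\tau}M)=V_{\mu,p}(\Pi_{\mu,p}^{\tau}K,L)$. Combining, $V_{\mu,p}(K,\Pi_{\mu,p}^{\tau}K)\leq V_{\mu,p}(\Pi_{\mu,p}^{\tau}K,L)$. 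On the left-hand side, $V_{\mu,p}(K,\Pi_{\mu,p}^{\tau}K)$ is not directly $\mu(K)$, so the trick is to instead feed the chain into the Minkowski inequality (\ref{lphtm2}): applied with the pair $(\Pi_{\mu,p}^{\tau}K,L)$ it gives $V_{\mu,p}(\Pi_{\mu,p}^{\tau}K,L)\geq p\,\mu(\Pi_{\mu,p}^{\tau}K)^{1-s}\mu(L)^{s}+(1-p)\mu(\Pi_{\mu,p}^{\tau}K)$, which is the wrong direction for what we want — so the correct route is the reverse: bound $V_{\mu,p}(K,\Pi_{\mu,p}^{\tau}K)$ from below using (\ref{lphtm2}) with the pair $(K,\Pi_{\mu,p}^{\tau}K)$.

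Let me restate the argument more carefully. By monotonicity and Proposition \ref{prop1} applied twice (using $L\in\mathcal{P}^{\tau}_{\mu,p}$ so that $\Pi_{\mu,p}^{\tau}$ can be transferred back onto $L$), the inclusion $\Pi_{\mu,p}^{\tau}K\subseteq\Pi_{\mu,p}^{\tau}L$ yields
\begin{align*}
V_{\mu,p}(K,\Pi_{\mu,p}^{\tau}K)\leq V_{\mu,p}(K,\Pi_{\mu,p}^{\tau}L)=V_{\mu,p}(L,\Pi_{\mu,p}^{\tau}K)=V_{\mu,p}(\Pi_{\mu,p}^{\tau}K,L).
\end{align*}
Then I would apply the Minkowski inequality (\ref{lphtm2}) to the left side, $V_{\mu,p}(K,\Pi_{\mu,p}^{\tau}K)\geq p\,\mu(K)^{1-s}\mu(\Pi_{\mu,p}^{\tau}K)^{s}+(1-p)\mu(K)$, and to the right side in the form that bounds it \emph{above} — but (\ref{lphtm2}) only gives lower bounds, so instead I would use that $V_{\mu,p}(\Pi_{\mu,p}^{\tau}K,L)$ plus the inclusion $\Pi_{\mu,p}^{\tau}K\subseteq\Pi_{\mu,p}^{\tau}L=$ (after the transfer) something comparable to $L$; concretely, since $\Pi_{\mu,p}^{\tau}K\subseteq L$ fails in general, the cleanest path is: from $\Pi_{\mu,p}^{\tau}K\subseteq\Pi_{\mu,p}^{\tau}L$ and $V_{\mu,p}(\Pi_{\mu,p}^{\tau}K,\cdot)$ being monotone, plus $V_{\mu,p}(M,M)=\mu(M)$, deduce $V_{\mu,p}(\Pi_{\mu,p}^{\tau}K,L)=V_{\mu,p}(\Pi_{\mu,p}^{\tau}K,\Pi_{\mu,p}^{\tau}M)\leq V_{\mu,p}(\Pi_{\mu,p}^{\tau}L,\Pi_{\mu,p}^{\tau}M)$; iterating this, and using Proposition \ref{prop1} symmetry together with $\Pi_{\mu,p}^{\tau}K\subseteq\Pi_{\mu,p}^{\tau}L$ once more, collapses the chain to $V_{\mu,p}(\Pi_{\mu,p}^{\tau}K,\Pi_{\mu,p}^{\tau}K)=\mu(\Pi_{\mu,p}^{\tau}K)$. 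This gives $p\,\mu(K)^{1-s}\mu(\Pi_{\mu,p}^{\tau}K)^{s}+(1-p)\mu(K)\leq\mu(\Pi_{\mu,p}^{\tau}K)$, and dividing through by $\mu(\Pi_{\mu,p}^{\tau}K)$ and abbreviating $t=\mu(K)/\mu(\Pi_{\mu,p}^{\tau}K)$ — wait, this isn't $\mu(K)/\mu(L)$ yet; the final reduction uses that the inclusion forces $\mu(\Pi_{\mu,p}^{\tau}K)\leq\mu(\Pi_{\mu,p}^{\tau}L)=\mu(L)$ after the membership transfer, so the resulting bound on the decreasing-in-the-denominator expression $p(\mu(K)/x)^{1-s}+(1-p)(\mu(K)/x)\le 1$ persists when $x=\mu(\Pi_{\mu,p}^{\tau}K)$ is replaced by the larger $\mu(L)$.

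For part (b), the structure is identical but uses Proposition \ref{durch} in place of Proposition \ref{prop1}, together with the duality between inclusion of polar bodies and reverse inclusion of the bodies, and the $L_p$ dual Minkowski inequality (\ref{mi}) from Lemma \ref{DMMBS} applied with exponent $-p$ (noting $-p<0$, so (\ref{mi}) is reversed appropriately) to handle the $\widetilde{V}_{\mu,-p}$ term; one then reads off the same scalar inequality. The main obstacle, and the step I would be most careful with, is the bookkeeping in the chain of equalities and inclusions: making sure that the hypothesis $L\in\mathcal{P}^{\tau}_{\mu,p}$ (resp.\ $L\in\mathcal{C}^{\tau}_{\mu,p}$) is used exactly where needed to re-apply Proposition \ref{prop1} (resp.\ Proposition \ref{durch}) and collapse everything to the diagonal value $\mu(\Pi_{\mu,p}^{\tau}K)$, and that the direction of every Minkowski-type inequality is correct for the sign of $p$ versus $-p$ and for $s\in(0,1)$. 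Once the chain is set up correctly, the final step is the elementary observation that the function $t\mapsto pt^{1-s}+(1-p)t$ is monotone on $(0,1]$ in a way that lets one replace $\mu(\Pi_{\mu,p}^{\tau}K)$ by $\mu(L)$ in the denominator, yielding the stated inequality $1\geq p(\mu(K)/\mu(L))^{1-s}+(1-p)(\mu(K)/\mu(L))$.
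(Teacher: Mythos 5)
Your proposal does not close, and the root cause is that you test the inclusion against the wrong body. The paper's proof is short: writing $L=\Pi_{\mu,p}^{\tau}M$, it integrates the inclusion $\Pi_{\mu,p}^{\tau}K\subseteq\Pi_{\mu,p}^{\tau}L$ against $S_{\mu,p}(M,\cdot)$, i.e.\ starts from $V_{\mu,p}(M,\Pi_{\mu,p}^{\tau}K)\leq V_{\mu,p}(M,\Pi_{\mu,p}^{\tau}L)$; a single application of Proposition \ref{prop1} then turns both sides into $V_{\mu,p}(K,\Pi_{\mu,p}^{\tau}M)\leq V_{\mu,p}(L,\Pi_{\mu,p}^{\tau}M)$, which is exactly $V_{\mu,p}(K,L)\leq V_{\mu,p}(L,L)=\mu(L)$, and the Minkowski inequality (\ref{lphtm2}) finishes. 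You instead test against $K$, which produces $V_{\mu,p}(K,\Pi_{\mu,p}^{\tau}K)$ on the left --- a quantity with no useful relation to $\mu(K)$ --- and the subsequent attempts to repair this rely on steps that are not available: (i) the equality $V_{\mu,p}(\Pi_{\mu,p}^{\tau}M,\Pi_{\mu,p}^{\tau}K)=V_{\mu,p}(\Pi_{\mu,p}^{\tau}K,\Pi_{\mu,p}^{\tau}M)$ is not an instance of Proposition \ref{prop1}, which is a symmetry only in the specific form $V_{\mu,p}(A,\Pi_{\mu,p}^{\tau}B)=V_{\mu,p}(B,\Pi_{\mu,p}^{\tau}A)$; $V_{\mu,p}$ is not symmetric in general. (ii) The step $V_{\mu,p}(\Pi_{\mu,p}^{\tau}K,\Pi_{\mu,p}^{\tau}M)\leq V_{\mu,p}(\Pi_{\mu,p}^{\tau}L,\Pi_{\mu,p}^{\tau}M)$ uses monotonicity of $V_{\mu,p}$ in its \emph{first} argument, which fails (the $L_p$ surface $\mu$-area measure is not monotone under inclusion); the representation (\ref{lphmt}) only gives monotonicity in the second argument. (iii) The claim $\mu(\Pi_{\mu,p}^{\tau}L)=\mu(L)$ is false: $L\in\mathcal{P}^{\tau}_{\mu,p}$ means $L$ \emph{is} a projection body, not that it is a fixed point of $\Pi_{\mu,p}^{\tau}$.

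Even the final reduction is flawed: for $p>1$ the map $x\mapsto p(\mu(K)/x)^{1-s}+(1-p)(\mu(K)/x)$ is not monotone decreasing (the two terms move in opposite directions), so you cannot replace $\mu(\Pi_{\mu,p}^{\tau}K)$ by a larger quantity in the denominator and preserve the bound. For part (b) the same structural issue recurs: the paper takes $N$ with $L=\Gamma_{\mu,p}^{\tau}N$, integrates the reverse inclusion of the polar bodies against $\rho(N,\cdot)^{(1+sp)/s}d\mu$ to get $\widetilde{V}_{\mu,-p}(N,\Pi_{\mu,p}^{\tau,\ast}K)\leq\widetilde{V}_{\mu,-p}(N,\Pi_{\mu,p}^{\tau,\ast}L)$, converts via Proposition \ref{durch} to $V_{\mu,p}(K,L)\leq V_{\mu,p}(L,L)=\mu(L)$, and again concludes with (\ref{lphtm2}) --- Lemma \ref{DMMBS} is not needed there. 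The missing idea in your write-up is precisely this choice of test body: use the preimage of $L$ under the relevant operator so that the chain collapses to the diagonal value $\mu(L)$, not $\mu(\Pi_{\mu,p}^{\tau}K)$.
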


\begin{proof}
For (a), since $\Pi_{\mu,p}^{\tau}K\subseteq \Pi_{\mu,p}^{\tau}L$ and $L\in\mathcal{P}^{\tau}_{\mu,p}$, then there exist $M \in\mathscr{K}^n_o$, such that $L=\Pi_{\mu,p}^{\tau}M$, by (\ref{lphmt}), we deduce
\begin{align*}
 V_{\mu,p}(M, \Pi_{\mu,p}^{\tau}K)\leq V_{\mu,p}(M, \Pi_{\mu,p}^{\tau}L).
\end{align*}
From these and Proposition \ref{prop1}, it follows that
\begin{align*}
 V_{\mu,p}(K, \Pi_{\mu,p}^{\tau}M)= V_{\mu,p}(M, \Pi_{\mu,p}^{\tau}K)\leq V_{\mu,p}(M, \Pi_{\mu,p}^{\tau}L)= V_{\mu,p}(L, \Pi_{\mu,p}^{\tau}M).
\end{align*}
Let $\Pi_{\mu,p}^{\tau}M=L$ in the  above inequality, from (\ref{lphtm2}), it follows that
\begin{align*}
\mu(L)\geq V_{\mu,p}(K, L)\geq p\mu(K)^{1-s}\mu(L)^{s}+(1-p)\mu(K),
\end{align*}
i.e.,
\begin{align*}
1\geq p \left(\frac{\mu(K)}{\mu(L)}\right)^{1-s}+(1-p)\frac{\mu(K)}{\mu(L)}.
\end{align*}
Therefore, (a) is established.

For (b), since $\Pi_{\mu,p}^{\tau,\ast}K\supseteq \Pi_{\mu,p}^{\tau,\ast}L$ and $L\in\mathcal{C}^{\tau}_{\mu,p}$, then there exist $N \in\mathscr{S}^n_o$, such that $L=\Gamma_{\mu,p}^{\tau}N$, from (\ref{LpHT}), we obtain
\begin{align*}
 \widetilde{V}_{\mu,-p}(N, \Pi_{\mu,p}^{\tau,\ast}K)\leq \widetilde{V}_{\mu,-p}(N, \Pi_{\mu,p}^{\tau,\ast}L),
\end{align*}
from this and Proposition \ref{durch}, we deduce
\begin{align*}
V_{\mu,p}(K, \Gamma_{\mu,p}^{\tau}N)\leq V_{\mu,p}(L, \Gamma_{\mu,p}^{\tau}N).
\end{align*}
Let $\Gamma_{\mu,p}^{\tau}N=L$ in the above inequality, from (\ref{lphtm2}), it follows that
\begin{align*}
\mu(L)\geq V_{\mu,p}(K, L)\geq p\mu(K)^{1-s}\mu(L)^{s}+(1-p)\mu(K),
\end{align*}
i.e.,
\begin{align*}
1\geq p \left(\frac{\mu(K)}{\mu(L)}\right)^{1-s}+(1-p)\frac{\mu(K)}{\mu(L)}.
\end{align*}
This completes the proof of the Theorem.
\end{proof}

Especially, let $p=1$ and $\tau=0$ in Theorem \ref{tyttjb}, since $1-s>0$, we can get the following interesting results.

\begin{corollary}\label{CORR1}
Let $\mu$ satisfy the homogeneity condition {\bf ({\rm I})} and concavity condition {\bf ({\rm II})} and $K \in \mathscr{K}^n_{\mathrm{o}}$.
 
(a) If $\Pi_{\mu}K\subseteq \Pi_{\mu}L$ and $L\in\mathcal{P}_{\mu}$, then
\begin{align*}
\mu(K)\leq\mu(L).
\end{align*}

(b) If $\Pi_{\mu}^{\ast}K\supseteq \Pi_{\mu}^{\ast}L$ and $L\in\mathcal{C}_{\mu}$, then
\begin{align*}
\mu(K)\leq\mu(L).
\end{align*}
Here, $\mathcal{P}_{\mu}$ is the set of $\mu$-projection bodies and $\mathcal{C}_{\mu}$ denotes the set of $\mu$-centroid bodies.
\end{corollary}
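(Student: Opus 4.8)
The plan is to derive Corollary \ref{CORR1} directly from Theorem \ref{tyttjb} by specializing the parameters to $p=1$ and $\tau=0$. The first step is to match notation: by the conventions fixed in Section \ref{SEC3} one has $\Pi_{\mu}K=\Pi_{\mu,1}^{0}K$, $\Pi_{\mu}^{\ast}K=\Pi_{\mu,1}^{0,\ast}K$, $\mathcal{P}_{\mu}=\mathcal{P}^{0}_{\mu,1}$ and $\mathcal{C}_{\mu}=\mathcal{C}^{0}_{\mu,1}$, so that the hypotheses $\Pi_{\mu}K\subseteq\Pi_{\mu}L$ with $L\in\mathcal{P}_{\mu}$ in part (a) (respectively $\Pi_{\mu}^{\ast}K\supseteq\Pi_{\mu}^{\ast}L$ with $L\in\mathcal{C}_{\mu}$ in part (b)) are verbatim the hypotheses of Theorem \ref{tyttjb}(a) (respectively Theorem \ref{tyttjb}(b)) taken at $p=1$, $\tau=0$.

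The second step is to read off the conclusion. Setting $p=1$ makes the factor $1-p$ multiplying the mixed term vanish, so the conclusion of Theorem \ref{tyttjb} collapses to
\begin{align*}
1\geq\left(\frac{\mu(K)}{\mu(L)}\right)^{1-s}.
\end{align*}
The third and last step is to strip the exponent: since $r>0$ and $n\geq 1$ we have $n+\frac{1}{r}>1$, hence $0<s<1$ and $1-s>0$; as $t\mapsto t^{1-s}$ is strictly increasing on $[0,\infty)$, the displayed inequality gives $\mu(K)/\mu(L)\leq 1$, that is $\mu(K)\leq\mu(L)$. Part (b) is proved in exactly the same way, starting from Theorem \ref{tyttjb}(b) instead of (a).

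I do not expect any genuine obstacle, since the whole content is already contained in Theorem \ref{tyttjb}; the only points that must be checked are that the coefficient $1-p$ really vanishes at $p=1$ (so that the mixed term disappears and one is left with a single power of $\mu(K)/\mu(L)$) and that $1-s>0$ under the standing homogeneity hypothesis (so that this power may be removed without reversing the inequality). Should one prefer a self-contained argument, an alternative of strictly greater length would be to rerun the proof of Theorem \ref{tyttjb} with $p=1$, invoking the $L_1$ mixed-volume Minkowski inequality (\ref{lphtm2}) in part (a) and the dual Minkowski inequality (\ref{mi}) for $\widetilde{V}_{\mu,-p}$ in part (b); this yields nothing new.
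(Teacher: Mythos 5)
Your proposal is correct and is exactly the paper's own argument: the paper derives Corollary \ref{CORR1} by setting $p=1$ and $\tau=0$ in Theorem \ref{tyttjb} and using $1-s>0$ to strip the exponent. Nothing further is needed.
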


\begin{proposition}\label{prop3}
Let $\mu$ satisfy the homogeneity condition {\bf ({\rm I})}, $K,L \in \mathscr{S}^n_{\mathrm{o}}$ and  $p \geq 1$, then
\begin{align*}
\frac{ \widetilde{V}_{\mu,-p}(K,\Gamma_{\mu,p}^{\tau,\ast} L)}{\mu(K)}=\frac{ \widetilde{V}_{\mu,-p}(L,\Gamma_{\mu,p}^{\tau,\ast} K)}{\mu(L)}.
\end{align*}
\end{proposition}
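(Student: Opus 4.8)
The plan is to mimic the argument used in Proposition \ref{durch}: unfold the definition of $\Gamma_{\mu,p}^{\tau,\ast}L$ using (\ref{SRF}), pull the integral expression of $\widetilde{V}_{\mu,-p}$ from (\ref{LpHT}), and then apply Fubini's theorem to expose the symmetry between $K$ and $L$. Concretely, start from Lemma \ref{Lpdmmm}: since $K,L\in\mathscr{S}^n_o$, we have
\begin{align*}
\widetilde{V}_{\mu,-p}(K,\Gamma_{\mu,p}^{\tau,\ast}L)=s\int_{\mathbb{S}^{n-1}}\rho(K,u)^{\frac{1+sp}{s}}\rho(\Gamma_{\mu,p}^{\tau,\ast}L,u)^{-p}d\mu(u).
\end{align*}
By (\ref{SRF}), $\rho(\Gamma_{\mu,p}^{\tau,\ast}L,u)^{-p}=h(\Gamma_{\mu,p}^{\tau}L,u)^{p}$, so substituting the defining formula (\ref{ghmz}) for $h(\Gamma_{\mu,p}^{\tau}L,u)^p$ turns the right-hand side into a double integral over $\mathbb{S}^{n-1}\times\mathbb{S}^{n-1}$ with integrand
$\frac{2s}{\alpha_{n,p}(\tau)(p+\frac1s)\mu(L)}\,\psi_\tau(u\cdot v)^p\,\rho(L,v)^{p+\frac1s}\rho(K,u)^{\frac{1+sp}{s}}\,d\mu(v)\,d\mu(u)$; note $\frac{1+sp}{s}=p+\frac1s$, so the two radial exponents match.

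Next I would divide through by $\mu(K)$ and invoke Fubini's theorem to swap the order of integration. Because $\psi_\tau(u\cdot v)^p=\psi_\tau(v\cdot u)^p$, the resulting expression is manifestly symmetric under interchanging the roles of $(K,u)$ and $(L,v)$:
\begin{align*}
\frac{\widetilde{V}_{\mu,-p}(K,\Gamma_{\mu,p}^{\tau,\ast}L)}{\mu(K)}
=\frac{2s}{\alpha_{n,p}(\tau)(p+\frac1s)\mu(K)\mu(L)}\int_{\mathbb{S}^{n-1}}\int_{\mathbb{S}^{n-1}}\psi_\tau(u\cdot v)^p\,\rho(K,u)^{p+\frac1s}\rho(L,v)^{p+\frac1s}\,d\mu(v)\,d\mu(u),
\end{align*}
and running the same computation with $K$ and $L$ exchanged reproduces exactly this quantity, giving $\widetilde{V}_{\mu,-p}(K,\Gamma_{\mu,p}^{\tau,\ast}L)/\mu(K)=\widetilde{V}_{\mu,-p}(L,\Gamma_{\mu,p}^{\tau,\ast}K)/\mu(L)$, as claimed.

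The only points that need care — rather than genuine obstacles — are bookkeeping ones: checking that the exponent identity $\frac{1-s(-p)}{s}=\frac{1+sp}{s}=p+\frac1s$ is applied correctly when specializing Lemma \ref{Lpdmmm} to the index $-p$, verifying that $\Gamma_{\mu,p}^{\tau,\ast}L$ indeed lies in $\mathscr{S}^n_o$ so that Lemma \ref{Lpdmmm} applies (this follows since $\Gamma_{\mu,p}^{\tau}L\in\mathscr{K}^n_o$, hence its polar is a star body), and justifying the use of Fubini's theorem, which is immediate because the integrand is nonnegative and continuous on the compact product $\mathbb{S}^{n-1}\times\mathbb{S}^{n-1}$ against the finite measure induced by $\mu$. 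No new inequality is required; the statement is purely a symmetry identity, parallel to Proposition \ref{prop1}.
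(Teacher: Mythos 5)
Your proposal is correct and follows essentially the same route as the paper's own proof: expand $\widetilde{V}_{\mu,-p}$ via Lemma \ref{Lpdmmm}, convert $\rho(\Gamma_{\mu,p}^{\tau,\ast}\cdot,u)^{-p}$ to $h(\Gamma_{\mu,p}^{\tau}\cdot,u)^{p}$ by (\ref{SRF}), insert the definition (\ref{ghmz}), and apply Fubini to exhibit the symmetric double integral. The extra remarks on the exponent identity and the applicability of Fubini are sound and only make explicit what the paper leaves implicit.
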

\begin{proof}

Combining (\ref{LpHT}), (\ref{ghmz}), (\ref{SRF}) and Fubini's theorem, using (\ref{SRF}) (\ref{ghmz}) and (\ref{LpHT}) again, we get
\begin{align*}
&\widetilde{V}_{\mu,-p}(L,\Gamma_{\mu,p}^{\tau,\ast} K)\\
=&s\int_{\mathbb{S}^{n-1}}\rho(L,u)^\frac{1+sp}{s}\rho(\Gamma_{\mu,p}^{\tau,\ast} K,u)^{-p}d\mu(u)\\
=&\frac{2s}{\alpha_{n,p}(\tau)(p+\frac{1}{s})\mu(K)}\int_{\mathbb{S}^{n-1}}\int_{\mathbb{S}^{n-1}} \psi_{\tau}(u\cdot
v)^p\rho(K,v)^\frac{1+sp}{s}\rho(L,u)^\frac{1+sp}{s}d\mu(v)d\mu(u)\\=&\frac{2s}{\alpha_{n,p}(\tau)(p+\frac{1}{s})\mu(K)}\int_{\mathbb{S}^{n-1}}\int_{\mathbb{S}^{n-1}} \psi_{\tau}(v\cdot
u)^p\rho(L,u)^\frac{1+sp}{s}\rho(K,v)^\frac{1+sp}{s}d\mu(u)d\mu(v)\\
=&\frac{\mu(L)}{\mu(K)}s\int_{\mathbb{S}^{n-1}}\rho(K,v)^\frac{1+sp}{s}\rho(\Gamma_{\mu,p}^{\tau,\ast} L,v)^{-p}d\mu(v)\\
=&\frac{\mu(L)}{\mu(K)} \widetilde{V}_{\mu,-p}(K,\Gamma_{\mu,p}^{\tau,\ast} L). \end{align*}
This gives the equality.
\end{proof}

\begin{theorem}\label{ty2}
 Let $\mu$ satisfy the homogeneity condition {\bf ({\rm I})}, $K\in \mathscr{S}^n_{\mathrm{o}}$, $ p\geq 1$ and $\tau \in [-1,1]$.

(a) If $\Gamma_{\mu,p}^{\tau}K\subseteq \Gamma_{\mu,p}^{\tau}L$ and $L\in\mathcal{P}^{\tau,*}_{\mu,p}$, then
\begin{align*}
\mu(K)\leq\mu(L).
\end{align*}

(a) If $\Gamma_{\mu,p}^{\tau,\ast}K\supseteq \Gamma_{\mu,p}^{\tau,\ast}L$ and $L\in\mathcal{C}^{\tau,*}_{\mu,p}$, then
\begin{align*}
\mu(K)\leq\mu(L).
\end{align*}
Equality holds in (a) and (b) if and only if $K=L$.
\end{theorem}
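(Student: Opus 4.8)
\textbf{Proof proposal for Theorem \ref{ty2}.}

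The plan is to observe that both parts reduce to the single inequality $\widetilde{V}_{\mu,-p}(K,L)\le\mu(K)$, where $L\in\mathscr{S}^n_o$ is the star body furnished by the membership hypothesis (the polar of a member of $\mathscr{K}^n_{\mathrm{o}}$ again lies in $\mathscr{K}^n_{\mathrm{o}}\subset\mathscr{S}^n_o$), and then to finish by the Minkowski inequality for the $L_p$ dual mixed $\mu$ measure (Lemma \ref{DMMBS}) applied with the negative exponent $-p$. Since $p\ge 1$, the exponent $-p$ lies outside $(0,n+\tfrac1r)$, so Lemma \ref{DMMBS} gives the \emph{reversed} inequality $\widetilde{V}_{\mu,-p}(K,L)\ge\mu(K)^{1+sp}\mu(L)^{-sp}$, with equality iff $K$ and $L$ are dilatates. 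Combining this with $\widetilde{V}_{\mu,-p}(K,L)\le\mu(K)$ yields $\mu(K)^{sp}\le\mu(L)^{sp}$, hence $\mu(K)\le\mu(L)$ because $sp>0$. Three elementary facts will be used repeatedly: $\widetilde{V}_{\mu,-p}(M,M)=\mu(M)$ for $M\in\mathscr{S}^n_o$ (combine (\ref{LpHT}) with (\ref{MUM})); the map $N\mapsto V_{\mu,p}(M,N)$ is monotone under inclusion by (\ref{lphmt}); and, because $\rho^{-p}$ is decreasing, the map $N\mapsto\widetilde{V}_{\mu,-p}(M,N)$ is \emph{reverse}-monotone under inclusion by (\ref{LpHT}).

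To prove (a), write $L=\Pi_{\mu,p}^{\tau,*}M$ with $M\in\mathscr{K}^n_{\mathrm{o}}$. The inclusion $\Gamma_{\mu,p}^{\tau}K\subseteq\Gamma_{\mu,p}^{\tau}L$ and monotonicity of $V_{\mu,p}(M,\cdot)$ give $V_{\mu,p}(M,\Gamma_{\mu,p}^{\tau}K)\le V_{\mu,p}(M,\Gamma_{\mu,p}^{\tau}L)$. Applying Proposition \ref{durch} to each side, both equal a common positive constant (namely $2/[c_{n,p}(\tau)\alpha_{n,p}(\tau)(p+\tfrac1s)]$) times, respectively, $\widetilde{V}_{\mu,-p}(K,\Pi_{\mu,p}^{\tau,*}M)/\mu(K)=\widetilde{V}_{\mu,-p}(K,L)/\mu(K)$ and $\widetilde{V}_{\mu,-p}(L,\Pi_{\mu,p}^{\tau,*}M)/\mu(L)=\widetilde{V}_{\mu,-p}(L,L)/\mu(L)=1$. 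Cancelling the constant yields $\widetilde{V}_{\mu,-p}(K,L)\le\mu(K)$, and the first paragraph completes the case.

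To prove (b), write $L=\Gamma_{\mu,p}^{\tau,*}N$ with $N\in\mathscr{S}^n_o$. The inclusion $\Gamma_{\mu,p}^{\tau,\ast}K\supseteq\Gamma_{\mu,p}^{\tau,\ast}L$ and reverse-monotonicity of $\widetilde{V}_{\mu,-p}(N,\cdot)$ give $\widetilde{V}_{\mu,-p}(N,\Gamma_{\mu,p}^{\tau,*}K)\le\widetilde{V}_{\mu,-p}(N,\Gamma_{\mu,p}^{\tau,*}L)$. Proposition \ref{prop3} applied to the pair $(N,K)$ gives $\widetilde{V}_{\mu,-p}(N,\Gamma_{\mu,p}^{\tau,*}K)=\mu(N)\widetilde{V}_{\mu,-p}(K,\Gamma_{\mu,p}^{\tau,*}N)/\mu(K)=\mu(N)\widetilde{V}_{\mu,-p}(K,L)/\mu(K)$, and applied to $(N,L)$ it gives $\widetilde{V}_{\mu,-p}(N,\Gamma_{\mu,p}^{\tau,*}L)=\mu(N)\widetilde{V}_{\mu,-p}(L,\Gamma_{\mu,p}^{\tau,*}N)/\mu(L)=\mu(N)\widetilde{V}_{\mu,-p}(L,L)/\mu(L)=\mu(N)$. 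Cancelling the positive factor $\mu(N)$ yields $\widetilde{V}_{\mu,-p}(K,L)\le\mu(K)$ once more, and we conclude exactly as before.

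For the equality statement, $\mu(K)=\mu(L)$ forces equality in the reversed Minkowski inequality of Lemma \ref{DMMBS}, so $K=\lambda L$ for some $\lambda>0$; the $1/s$-homogeneity of $\mu$ then gives $\mu(L)=\mu(K)=\lambda^{1/s}\mu(L)$, so $\lambda=1$ and $K=L$; the converse is immediate. The only delicate point is keeping track of the sign of the exponent $-p$, which reverses both the monotonicity of the dual mixed $\mu$ measure in its second slot and the direction of the Minkowski inequality; once this is done, the proof is a mechanical combination of Propositions \ref{durch} and \ref{prop3} with Lemma \ref{DMMBS}, and I do not anticipate a genuine analytic obstacle.
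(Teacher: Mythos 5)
Your proposal is correct and follows essentially the same route as the paper: represent $L$ as $\Pi_{\mu,p}^{\tau,*}Q$ (resp.\ $\Gamma_{\mu,p}^{\tau,*}M$), transfer the inclusion through Proposition \ref{durch} (resp.\ Proposition \ref{prop3}) to obtain $\widetilde{V}_{\mu,-p}(K,L)\le\mu(K)$, and conclude with the reversed Minkowski inequality of Lemma \ref{DMMBS} for the exponent $-p<0$. Your handling of the equality case via dilatation plus $1/s$-homogeneity likewise matches the paper's argument (and you even cite the two propositions in the correct order, whereas the paper's proof swaps the citation labels between parts (a) and (b)).
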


\begin{proof}
For (a), since $\Gamma_{\mu,p}^{\tau}K\subseteq \Gamma_{\mu,p}^{\tau}L$ and $L\in\mathcal{P}^{\tau,*}_{\mu,p}$, then there exist $Q \in\mathscr{K}^n_o$, such that $L=\Pi_{\mu,p}^{\tau,*}Q$, by (\ref{lphmt}), we get
\begin{align}\label{VQL1}
 V_{\mu,p}(Q, \Gamma_{\mu,p}^{\tau}K)\leq V_{\mu,p}(Q, \Gamma_{\mu,p}^{\tau}L)
\end{align}
with equality if and only if $ \Gamma_{\mu,p}^{\tau}K= \Gamma_{\mu,p}^{\tau}L$. From these and Proposition \ref{prop3}, it follows that
\begin{align*}
\frac{1}{\mu(K)}\widetilde{V}_{\mu,-p}(K, \Pi_{\mu,p}^{\tau,*}Q)\leq\frac{1}{\mu(L)}\widetilde{V}_{\mu,-p}(L, \Pi_{\mu,p}^{\tau,*}Q).
\end{align*}
Let $\Pi_{\mu,p}^{\tau,*}Q=L$ in the above inequality, from Lemma \ref{DMMBS}, it follows that
\begin{align*}
1\geq\frac{1}{\mu(K)}\widetilde{V}_{\mu,-p}(K, L)\geq\frac{1}{\mu(K)} \mu(K)^{1+sp}\mu(L)^{-sp}.
\end{align*}
i.e.,
\begin{align*}
\mu(K)\leq\mu(L).
\end{align*}
The equality conditions of Lemma \ref{DMMBS} and (\ref{VQL1}) imply that in (a) the equality holds if and only if $K=L$.
This shows that (a) holds.

For (b), if $\Gamma_{\mu,p}^{\tau,\ast}K\supseteq \Gamma_{\mu,p}^{\tau,\ast}L$ and $L\in\mathcal{C}^{\tau,*}_{\mu,p}$, then there exist $M \in\mathscr{S}^n_o$, such that $L=\Gamma_{\mu,p}^{\tau,*}M$, by (\ref{LpHT}) and (\ref{RC}), we deduce
\begin{align}\label{dlpqbi}
 \widetilde{V}_{\mu,-p}(M, \Gamma_{\mu,p}^{\tau,\ast}K)\leq \widetilde{V}_{\mu,-p}(M, \Gamma_{\mu,p}^{\tau,\ast}L),
\end{align}
with equality if and only if $ \Gamma_{\mu,p}^{\tau,\ast}K= \Gamma_{\mu,p}^{\tau,\ast}L$.
From these and Proposition \ref{durch}, it follows that
\begin{align*}
\frac{1}{\mu(K)}\widetilde{V}_{\mu,-p}(K, \Gamma_{\mu,p}^{\tau,*}M)\leq\frac{1}{\mu(L)}\widetilde{V}_{\mu,-p}(L, \Gamma_{\mu,p}^{\tau,*}M). 
\end{align*}
Let $\Gamma_{\mu,p}^{\tau,*}M=L$ in the above inequality, from Lemma \ref{DMMBS}, it follows that
\begin{align*}
1\geq\frac{1}{\mu(K)}\widetilde{V}_{\mu,-p}(K, L)\geq\frac{1}{\mu(K)} \mu(K)^{1+sp}\mu(L)^{-sp},
\end{align*}
equivalently,
\begin{align*}
\mu(K)\leq\mu(L).
\end{align*}
By the equality conditions of Lemma \ref{DMMBS} and (\ref{dlpqbi}), we see that equality holds in (b) if and only if $K=L$.
\end{proof}

Especially, let $p=1$ and $\tau=0$ in Theorem \ref{ty2}, we can get the following corollary.

\begin{corollary} 
Let $\mu$ satisfy the homogeneity condition {\bf ({\rm I})} and $K\in \mathscr{S}^n_{\mathrm{o}}$. 
 
(a) If $\Gamma_{\mu}K\subseteq \Gamma_{\mu}L$ and $L\in\mathcal{P}^{*}_{\mu}$, then
\begin{align*}
\mu(K)\leq\mu(L).
\end{align*}

(b) If $\Gamma_{\mu}^{\ast}K\supseteq \Gamma_{\mu}^{\ast}L$ and $L\in\mathcal{C}^{*}_{\mu}$, then
\begin{align*}
\mu(K)\leq\mu(L).
\end{align*}
Equality holds in (a) and (b) if and only if $K=L$. Here, $\mathcal{P}^{*}_{\mu}$ and $\mathcal{C}^{*}_{\mu}$ denotes the set of polar bodies of $\mu$-projection bodies and $\mu$-centroid bodies, respectively. 
\end{corollary}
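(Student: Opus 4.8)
The plan is to obtain this corollary as the special case $p=1$, $\tau=0$ of Theorem \ref{ty2}, so that essentially all of the work is already done and only the notation has to be matched up. First I would specialize the definitions of Section \ref{SEC3} at these parameters. Since $\psi_{0}(t)=|t|$, relation (\ref{ghmz}) with $p=1$, $\tau=0$ reads $h(\Gamma_{\mu,1}^{0}K,u)=\frac{2}{\alpha_{n,1}(0)\mu(K)}\int_{K}|u\cdot x|\,d\mu(x)$ with $\alpha_{n,1}(0)=2c_{n,1}$, which is by definition the $\mu$-centroid body $\Gamma_{\mu}K=\Gamma_{\mu,1}^{0}K$; likewise (\ref{ghmt}) gives $\Pi_{\mu}K=\Pi_{\mu,1}^{0}K$. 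Consequently $\Gamma_{\mu}^{\ast}=\Gamma_{\mu,1}^{0,\ast}$ and $\Pi_{\mu}^{\ast}=\Pi_{\mu,1}^{0,\ast}$, and by the definitions of the respective classes $\mathcal{P}_{\mu}^{\ast}=\mathcal{P}_{\mu,1}^{0,\ast}$ and $\mathcal{C}_{\mu}^{\ast}=\mathcal{C}_{\mu,1}^{0,\ast}$.

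With these identifications in hand, part (a) would be immediate: the hypotheses $\Gamma_{\mu}K\subseteq\Gamma_{\mu}L$ and $L\in\mathcal{P}_{\mu}^{\ast}$ are exactly the hypotheses of Theorem \ref{ty2}(a) at $p=1$, $\tau=0$, i.e. there is $Q\in\mathscr{K}_{o}^{n}$ with $L=\Pi_{\mu,1}^{0,\ast}Q=\Pi_{\mu}^{\ast}Q$; hence its conclusion $\mu(K)\leq\mu(L)$ together with its equality case $K=L$ transfers verbatim. Part (b) would be the same argument, with $\mathcal{C}_{\mu}^{\ast}$ replacing $\mathcal{P}_{\mu}^{\ast}$, the existence of $M\in\mathscr{S}^{n}_{o}$ with $L=\Gamma_{\mu}^{\ast}M$, and Theorem \ref{ty2}(b) replacing (a).

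The one thing genuinely worth checking — the ``obstacle'', such as it is — is that Theorem \ref{ty2} really applies at these parameters. Here $p=1$ satisfies the requirement $p\geq1$; the homogeneity condition {\bf ({\rm I})} is assumed in the corollary and is the sole standing hypothesis of Theorem \ref{ty2}; and the invocation of Lemma \ref{DMMBS} inside the proof of Theorem \ref{ty2} is made with the dual exponent $-p=-1<0$, which falls in the range where inequality (\ref{mi}) reverses, so the chain of estimates used there remains valid. No new inequality is required, and I do not expect any real difficulty beyond this bookkeeping.
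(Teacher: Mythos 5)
Your proposal is correct and matches the paper exactly: the paper obtains this corollary simply by setting $p=1$ and $\tau=0$ in Theorem \ref{ty2}, which is precisely your argument, with your additional bookkeeping (identifying $\Gamma_{\mu,1}^{0}$ with $\Gamma_{\mu}$, $\Pi_{\mu,1}^{0}$ with $\Pi_{\mu}$, and the corresponding classes) being a harmless elaboration of what the paper leaves implicit.
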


\section{Extremal Inequalities for general $L_p$ $\mu$-projection body and $\mu$-centroid body}\label{SEC4}
In this section, we will give the proof of Theorem \ref{TLD1} and Theorem \ref{TLD2}, which are Theorem \ref{dliqw} and Theorem \ref{dliqw2}. In order to prove the theorems, the following properties are required.

\begin{proposition}
Let $\mu$ satisfy the homogeneity condition {\bf ({\rm I})}, if $K\in \mathscr{K}^n_o$, $L\in \mathscr{S}^n_o$, $p\geq 1$ and $\tau \in [-1,1]$, then

\begin{align}\label{pro1}
\Pi_{\mu, p}^{-\tau} K=-\Pi_{\mu, p}^{\tau} K,
\end{align}
and
\begin{align}\label{pro2}
\Gamma_{\mu,p}^{-\tau}L=-\Gamma_{\mu,p}^{\tau}L.
\end{align}

\end{proposition}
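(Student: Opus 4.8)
The plan is to verify both identities by a direct computation at the level of support functions, using the key fact that $\psi_{\tau}(-t) = \psi_{-\tau}(t)$ for all $t \in \mathbb{R}$, which follows immediately from $\psi_{\tau}(t) = |t| + \tau t$. Indeed, $\psi_{\tau}(-t) = |-t| + \tau(-t) = |t| - \tau t = \psi_{-\tau}(t)$. I would also note at the outset that $c_{n,p}(-\tau) = c_{n,p}(\tau)$ and $\alpha_{n,p}(-\tau) = \alpha_{n,p}(\tau)$, since both normalizing constants depend on $\tau$ only through the symmetric combination $(1+\tau)^p + (1-\tau)^p$.

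For \eqref{pro1}, I would compute, for each $u \in \mathbb{S}^{n-1}$, the quantity $h(-\Pi_{\mu,p}^{\tau}K, u)^p = h(\Pi_{\mu,p}^{\tau}K, -u)^p$. By the defining formula \eqref{ghmt}, this equals $c_{n,p}(\tau)\int_{\mathbb{S}^{n-1}} \psi_{\tau}((-u)\cdot v)^p\, dS_{\mu,p}(K,v)$. Applying $\psi_{\tau}(-(u\cdot v)) = \psi_{-\tau}(u\cdot v)$ pointwise in the integrand and using $c_{n,p}(\tau) = c_{n,p}(-\tau)$, this becomes $c_{n,p}(-\tau)\int_{\mathbb{S}^{n-1}} \psi_{-\tau}(u\cdot v)^p\, dS_{\mu,p}(K,v) = h(\Pi_{\mu,p}^{-\tau}K, u)^p$. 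Since support functions determine convex bodies uniquely, this gives $-\Pi_{\mu,p}^{\tau}K = \Pi_{\mu,p}^{-\tau}K$. The identity \eqref{pro2} is proved in exactly the same way, starting from \eqref{ghmz}: one computes $h(-\Gamma_{\mu,p}^{\tau}L, u)^p = h(\Gamma_{\mu,p}^{\tau}L, -u)^p = \frac{2}{\alpha_{n,p}(\tau)\mu(L)}\int_L \psi_{\tau}((-u)\cdot x)^p\, d\mu(x)$, then uses $\psi_{\tau}(-(u\cdot x)) = \psi_{-\tau}(u\cdot x)$ and $\alpha_{n,p}(\tau) = \alpha_{n,p}(-\tau)$ to recognize the right-hand side as $h(\Gamma_{\mu,p}^{-\tau}L, u)^p$.

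An alternative, essentially equivalent route would be to invoke \eqref{tytmz} and \eqref{mtaupm} together with the relation $g_1(-\tau) = g_2(\tau)$ from \eqref{g2} and the sign relations \eqref{tzfgx}, \eqref{zxtds}. For instance, $\Pi_{\mu,p}^{-\tau}K = g_1(-\tau)\cdot\Pi_{\mu,p}^+K +_p g_2(-\tau)\cdot\Pi_{\mu,p}^-K = g_2(\tau)\cdot\Pi_{\mu,p}^+K +_p g_1(\tau)\cdot\Pi_{\mu,p}^-K$, and since $\Pi_{\mu,p}^-K = -\Pi_{\mu,p}^+K$ and the $L_p$-Minkowski combination commutes with the antipodal map (because $h(-M,-u) = h(M,u)$), this rearranges to $-(g_1(\tau)\cdot\Pi_{\mu,p}^+K +_p g_2(\tau)\cdot\Pi_{\mu,p}^-K) = -\Pi_{\mu,p}^{\tau}K$. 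The same manipulation with $\Gamma$ in place of $\Pi$ and \eqref{zxtds} in place of \eqref{tzfgx} yields \eqref{pro2}.

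There is no real obstacle here; the statement is a symmetry bookkeeping fact. The only point requiring a moment of care is confirming that the normalizing constants $c_{n,p}(\tau)$ and $\alpha_{n,p}(\tau)$ are invariant under $\tau \mapsto -\tau$ — which is transparent from their definitions — and, if one uses the second approach, checking that the antipodal map interacts correctly with the $L_p$-Minkowski combination in \eqref{lpmz}. I would present the first (direct support-function) argument as the main proof since it is the shortest and most self-contained.
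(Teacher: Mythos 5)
Your proof is correct. Your main argument (the direct support-function computation) takes a genuinely different route from the paper: the paper never uses the identity $\psi_{\tau}(-t)=\psi_{-\tau}(t)$ directly, but instead first decomposes $\Pi_{\mu,p}^{-\tau}K$ via (\ref{tytmz}) and (\ref{g2}) into $g_2(\tau)\cdot(-\Pi_{\mu,p}^-K)+_p g_1(\tau)\cdot(-\Pi_{\mu,p}^+K)$ using the previously established relation (\ref{tzfgx}), and then evaluates support functions at $-u$ to reassemble $-\Pi_{\mu,p}^{\tau}K$; the $\Gamma$ case is handled the same way via (\ref{mtaupm}) and (\ref{zxtds}). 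Your ``alternative route'' in the third paragraph is essentially the paper's proof. What your primary argument buys is brevity and self-containedness: it needs only the defining integrals (\ref{ghmt}) and (\ref{ghmz}), the elementary symmetry of $\psi_{\tau}$, and the observation that $c_{n,p}(\tau)$ and $\alpha_{n,p}(\tau)$ depend on $\tau$ only through $(1+\tau)^p+(1-\tau)^p$ -- it does not rely on the $+_p$-decomposition machinery or on (\ref{tzfgx})/(\ref{zxtds}) at all. What the paper's route buys is that it exercises exactly the identities ((\ref{tytmz}), (\ref{g2}), (\ref{lpmz})) that are reused immediately afterward in the subsequent propositions of Section \ref{SEC4}, so the bookkeeping it establishes is not wasted. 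Both are complete; there is no gap in either.
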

\begin{proof}
From (\ref{tytmz}), (\ref{g2}) and (\ref{tzfgx}), we have that
\begin{align*}
\Pi_{\mu, p}^{-\tau} K &=g_1(-\tau)\cdot \Pi_{\mu, p}^{+} K+_{p} g_2(-\tau)\cdot \Pi_{\mu, p}^{-} K \\
&=g_2(\tau)\cdot (-\Pi_{\mu, p}^{-}K)+_{p} g_1(\tau)\cdot(- \Pi_{\mu, p}^{+}K),
\end{align*}
from this, (\ref{lpmz}) and (\ref{tytmz}), for $ u \in \mathbb{S}^{n-1}$, we have
\begin{align*}
h(\Pi_{\mu, p}^{-\tau} K ,u)^p&=g_2(\tau)h(-\Pi_{\mu, p}^{-}K,u)^p+g_1(\tau)h(-\Pi_{\mu, p}^{+}K,u)^p\\
&=g_2(\tau)h(\Pi_{\mu, p}^{-}K,-u)^p+g_1(\tau)h(\Pi_{\mu, p}^{+}K,-u)^p\\
&=h(g_1(\tau)\cdot \Pi_{\mu, p}^+K+_{p} g_2(\tau)\cdot \Pi_{\mu, p}^-K,-u)^p\\
&=h(-\Pi_{\mu, p}^{\tau} K ,u)^p.
\end{align*}
This obtain
\begin{align*}
\Pi_{\mu, p}^{-\tau}K=-\Pi_{\mu, p}^{\tau} K.
\end{align*}
Similar to the above method, from (\ref{mtaupm}), (\ref{g2}), (\ref{zxtds}), (\ref{lpmz}) and using (\ref{mtaupm}) again, we can easily verify that (\ref{pro2}) holds.
\end{proof}

\begin{proposition}
Let $\mu$ satisfy the homogeneity condition {\bf ({\rm I})}, if $K\in \mathscr{K}^n_o$, $L\in \mathscr{S}^n_o$, $p\geq 1$, $\tau \in [-1,1]$ and $\tau \neq 0$, then
\begin{align*}
\Pi_{\mu, p}^{\tau} K=\Pi_{\mu, p}^{-\tau} K \quad \Longleftrightarrow \quad \Pi_{\mu, p}^{+} K=\Pi_{\mu, p}^{-} K;
\end{align*}
and 
\begin{align*}
\Gamma_{\mu, p}^{\tau} L=\Gamma_{\mu, p}^{-\tau} L \quad \Longleftrightarrow \quad \Gamma_{\mu, p}^{+} L=\Gamma_{\mu, p}^{-} L .
\end{align*}
\end{proposition}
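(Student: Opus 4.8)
The plan is to reduce both equivalences to a single elementary observation about the weights $g_1,g_2$ in \eqref{g1}. First I would combine \eqref{tytmz} with the definition \eqref{lpmz} of the $L_p$-Minkowski combination to record, for every $u\in\mathbb{S}^{n-1}$,
\begin{align*}
h(\Pi_{\mu,p}^{\tau}K,u)^p=g_1(\tau)\,h(\Pi_{\mu,p}^{+}K,u)^p+g_2(\tau)\,h(\Pi_{\mu,p}^{-}K,u)^p .
\end{align*}
Replacing $\tau$ by $-\tau$ and using the symmetry relations $g_1(-\tau)=g_2(\tau)$ and $g_2(-\tau)=g_1(\tau)$ from \eqref{g2}, the same identity gives
\begin{align*}
h(\Pi_{\mu,p}^{-\tau}K,u)^p=g_2(\tau)\,h(\Pi_{\mu,p}^{+}K,u)^p+g_1(\tau)\,h(\Pi_{\mu,p}^{-}K,u)^p .
\end{align*}
Subtracting these two, $\Pi_{\mu,p}^{\tau}K=\Pi_{\mu,p}^{-\tau}K$ holds if and only if
\begin{align*}
\big(g_1(\tau)-g_2(\tau)\big)\big(h(\Pi_{\mu,p}^{+}K,u)^p-h(\Pi_{\mu,p}^{-}K,u)^p\big)=0\qquad\text{for all }u\in\mathbb{S}^{n-1}.
\end{align*}

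The only remaining point is that $g_1(\tau)\neq g_2(\tau)$ whenever $\tau\in[-1,1]$ and $\tau\neq0$. By \eqref{g1} this is exactly the inequality $(1+\tau)^p\neq(1-\tau)^p$, which holds because $t\mapsto t^p$ is strictly increasing on $[0,\infty)$ for $p\geq1$ while $1+\tau$ and $1-\tau$ are distinct nonnegative numbers. Therefore the displayed product vanishes identically precisely when $h(\Pi_{\mu,p}^{+}K,\cdot)^p\equiv h(\Pi_{\mu,p}^{-}K,\cdot)^p$, i.e., when $\Pi_{\mu,p}^{+}K=\Pi_{\mu,p}^{-}K$. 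This settles the forward implication; the converse is immediate, since if $\Pi_{\mu,p}^{+}K=\Pi_{\mu,p}^{-}K$ then, using $g_1(\tau)+g_2(\tau)=1$, the first display yields $h(\Pi_{\mu,p}^{\tau}K,\cdot)^p=h(\Pi_{\mu,p}^{+}K,\cdot)^p$ and likewise $h(\Pi_{\mu,p}^{-\tau}K,\cdot)^p=h(\Pi_{\mu,p}^{+}K,\cdot)^p$, so the two bodies agree.

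For the centroid-body equivalence I would repeat the argument verbatim with \eqref{mtaupm} in place of \eqref{tytmz}: for $u\in\mathbb{S}^{n-1}$,
\begin{align*}
h(\Gamma_{\mu,p}^{\tau}L,u)^p=g_1(\tau)\,h(\Gamma_{\mu,p}^{+}L,u)^p+g_2(\tau)\,h(\Gamma_{\mu,p}^{-}L,u)^p,
\end{align*}
and the analogue for $-\tau$ with $g_1,g_2$ interchanged, so that $\Gamma_{\mu,p}^{\tau}L=\Gamma_{\mu,p}^{-\tau}L$ is equivalent to $\big(g_1(\tau)-g_2(\tau)\big)\big(h(\Gamma_{\mu,p}^{+}L,u)^p-h(\Gamma_{\mu,p}^{-}L,u)^p\big)=0$ for all $u$; the same nonvanishing of $g_1(\tau)-g_2(\tau)$ then finishes the proof.

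I do not expect any genuine obstacle here. The one step that must not be glossed over is the strict-monotonicity argument giving $g_1(\tau)\neq g_2(\tau)$ for $\tau\neq0$, since that is the sole place where the hypothesis $\tau\neq0$ is used; everything else is a direct unwinding of \eqref{tytmz}, \eqref{mtaupm} and \eqref{g2}, entirely parallel to the computation in the preceding proposition.
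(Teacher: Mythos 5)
Your argument is correct and is essentially the same as the paper's: both expand $h(\Pi_{\mu,p}^{\pm\tau}K,\cdot)^p$ via \eqref{tytmz}, \eqref{lpmz} and \eqref{g2}, subtract the two identities, and use $g_1(\tau)\neq g_2(\tau)$ for $\tau\neq 0$ (and likewise for $\Gamma_{\mu,p}^{\tau}L$ via \eqref{mtaupm}). The only difference is that you spell out why $g_1(\tau)\neq g_2(\tau)$, which the paper merely asserts.
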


\begin{proof}
Since $K,L \in \mathscr{K}^n_o$, $p\geq 1$, $\tau \in [-1,1]$ and $\tau \neq 0$, then by (\ref{tytmz}) and (\ref{g2}), we deduce
\begin{align*}
\Pi_{\mu, p}^{-\tau} K=g_2(\tau)\cdot \Pi_{\mu, p}^{+} K+_{p} g_1(\tau)\cdot \Pi_{\mu, p}^{-} K,
\end{align*}
from this and (\ref{lpmz}), for all $u \in \mathbb{S}^{n-1}$, we obtain
\begin{align}\label{ftyt}
h(\Pi_{\mu,p}^{-\tau} K,u)^p=g_2(\tau) h(\Pi_{\mu,p}^+K,u)^p+g_1(\tau) h(\Pi_{\mu,p}^-K,u)^p.
\end{align}
When $\Pi_{\mu, p}^{+} K=\Pi_{\mu, p}^{-} K$, by (\ref{ftyt}), (\ref{tytmz}) and (\ref{lpmz}), 
for all $u \in \mathbb{S}^{n-1}$, then
\begin{align*}
h(\Pi_{\mu,p}^{-\tau} K,u)^p=h(\Pi_{\mu,p}^{\tau} K,u)^p,
\end{align*}
thus $\Pi_{\mu, p}^{\tau} K=\Pi_{\mu, p}^{-\tau} K$.

Conversely, if $\Pi_{\mu, p}^{\tau} K=\Pi_{\mu, p}^{-\tau} K$, from (\ref{tytmz}) and (\ref{lpmz}), for all $u \in \mathbb{S}^{n-1}$, we have
\begin{align}\label{klmh}
h(\Pi_{\mu,p}^{\tau} K,u)^p=g_1(\tau) h(\Pi_{\mu,p}^+K,u)^p+g_2(\tau) h(\Pi_{\mu,p}^-K,u)^p.
\end{align}
From (\ref{klmh}) and (\ref{ftyt}), 
for all $u \in \mathbb{S}^{n-1}$, we obtain 
\begin{align*}
(g_{1}(\tau)-g_{2}(\tau)) h(\Pi_{\mu,p}^+K,u)^p=(g_{1}(\tau)-g_{2}(\tau)) h(\Pi_{\mu,p}^-K,u)^p,
\end{align*}
Since $g_{1}(\tau)-g_{2}(\tau) \neq 0$ when $\tau \neq 0$, we get $\Pi_{\mu, p}^{+} K=\Pi_{\mu, p}^{-} K$.

In addition, from (\ref{mtaupm}), (\ref{g2}) and (\ref{lpmz}), we obtain 
\begin{align}\label{ftyt3}
h(\Gamma_{\mu,p}^{\tau} L,u)^p=g_1(\tau) h(\Gamma_{\mu,p}^+L,u)^p+g_2(\tau) h(\Gamma_{\mu,p}^-L,u)^p,
\end{align}
and
\begin{align}\label{ftyt4}
h(\Gamma_{\mu,p}^{-\tau} L,u)^p=g_2(\tau) h(\Gamma_{\mu,p}^+L,u)^p+g_1(\tau) h(\Gamma_{\mu,p}^-L,u)^p.
\end{align}

If $\Gamma_{\mu, p}^{+} L=\Gamma_{\mu, p}^{-} L$, by (\ref{ftyt4}) and (\ref{mtaupm}), we can obtain $h(\Gamma_{\mu, p}^{\tau}L,u)^p =h(\Gamma_{\mu, p}^{-\tau} L,u)^p$ 
for all $u \in \mathbb{S}^{n-1}$, i.e., $\Gamma_{\mu, p}^{\tau}L =\Gamma_{\mu, p}^{-\tau}L$.

On the other hand, If $\Gamma_{\mu, p}^{\tau}L =\Gamma_{\mu, p}^{-\tau}L$, by (\ref{ftyt3}) and (\ref{ftyt4}), we obtain 
\begin{align*}
(g_{1}(\tau)-g_{2}(\tau)) h(\Gamma_{\mu,p}^+L,u)^p=(g_{1}(\tau)-g_{2}(\tau)) h(\Gamma_{\mu,p}^-L,u)^p,
\end{align*}
where $u \in \mathbb{S}^{n-1}$. Since $\tau \neq 0$, then $g_{1}(\tau)-g_{2}(\tau) \neq 0$. Hence, $\Gamma_{\mu, p}^{+} L=\Gamma_{\mu, p}^{-} L$.
\end{proof}

\begin{proposition}
Let $\mu$ satisfy the homogeneity condition {\bf ({\rm I})}, if $K\in \mathscr{K}^n_o$, $L\in \mathscr{S}^n_o$, $p\geq 1$ and $\tau \in [-1,1]$, then
\begin{align}\label{zftzfa}
\Pi_{\mu, p}^{\tau} K+_p\Pi_{\mu, p}^{-\tau} K = \Pi_{\mu, p}^{+} K+_p\Pi_{\mu, p}^{-} K,
\end{align}
and
\begin{align}\label{zftzfb}
\Gamma_{\mu, p}^{\tau} L+_p\Gamma_{\mu, p}^{-\tau} L = \Gamma_{\mu, p}^{+} L+_p\Gamma_{\mu, p}^{-} L.
\end{align}
\end{proposition}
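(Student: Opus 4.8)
The plan is to verify both identities at the level of support functions, since a body in $\mathscr{K}^n_o$ is determined by its support function and the $L_p$-Minkowski combination is defined through $p$-th powers of support functions via (\ref{lpmz}). The whole statement will then reduce to the normalization $g_1(\tau)+g_2(\tau)=1$.

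First I would record, using (\ref{tytmz}) together with the relations $g_1(-\tau)=g_2(\tau)$ and $g_1(\tau)=g_2(-\tau)$ from (\ref{g2}), the two representations
\begin{align*}
\Pi_{\mu,p}^{\tau}K = g_1(\tau)\cdot\Pi_{\mu,p}^+K +_p g_2(\tau)\cdot\Pi_{\mu,p}^-K, \qquad \Pi_{\mu,p}^{-\tau}K = g_2(\tau)\cdot\Pi_{\mu,p}^+K +_p g_1(\tau)\cdot\Pi_{\mu,p}^-K.
\end{align*}
Applying (\ref{lpmz}) to each and adding, for every $u\in\mathbb{S}^{n-1}$,
\begin{align*}
h(\Pi_{\mu,p}^{\tau}K +_p \Pi_{\mu,p}^{-\tau}K, u)^p &= h(\Pi_{\mu,p}^{\tau}K, u)^p + h(\Pi_{\mu,p}^{-\tau}K, u)^p\\
&= (g_1(\tau)+g_2(\tau))\,h(\Pi_{\mu,p}^+K, u)^p + (g_1(\tau)+g_2(\tau))\,h(\Pi_{\mu,p}^-K, u)^p\\
&= h(\Pi_{\mu,p}^+K, u)^p + h(\Pi_{\mu,p}^-K, u)^p = h(\Pi_{\mu,p}^+K +_p \Pi_{\mu,p}^-K, u)^p,
\end{align*}
where the penultimate equality uses $g_1(\tau)+g_2(\tau)=1$. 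Equality of support functions on $\mathbb{S}^{n-1}$ forces equality of the bodies, which is (\ref{zftzfa}).

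The second identity follows verbatim after replacing (\ref{tytmz}) by (\ref{mtaupm}): I would write $\Gamma_{\mu,p}^{\tau}L$ and $\Gamma_{\mu,p}^{-\tau}L$ as $L_p$-combinations of $\Gamma_{\mu,p}^+L$ and $\Gamma_{\mu,p}^-L$ with weight pairs $(g_1(\tau),g_2(\tau))$ and $(g_2(\tau),g_1(\tau))$ respectively, add the $p$-th powers of their support functions, and use $g_1(\tau)+g_2(\tau)=1$ to collapse the sum to $h(\Gamma_{\mu,p}^+L +_p \Gamma_{\mu,p}^-L, \cdot)^p$, giving (\ref{zftzfb}).

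There is no genuine obstacle here: the statement is a bookkeeping consequence of the representations (\ref{tytmz})–(\ref{mtaupm}), the additivity of $+_p$ in $p$-th powers of support functions, and the normalization $g_1+g_2=1$. The only point needing the slightest care is invoking (\ref{g2}) to see that passing from $\tau$ to $-\tau$ merely swaps the coefficients $g_1(\tau)$ and $g_2(\tau)$; once that is noted, the two weight pairs add coordinatewise to $(1,1)$ and both equalities drop out.
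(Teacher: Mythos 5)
Your argument is correct and is essentially identical to the paper's proof: the paper likewise records the two support-function identities $h(\Pi_{\mu,p}^{\tau}K,\cdot)^p=g_1(\tau)h(\Pi_{\mu,p}^{+}K,\cdot)^p+g_2(\tau)h(\Pi_{\mu,p}^{-}K,\cdot)^p$ and its $-\tau$ counterpart with the coefficients swapped, adds them, and invokes $g_1(\tau)+g_2(\tau)=1$ together with (\ref{lpmz}) to conclude, treating the centroid-body case verbatim. No gaps.
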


\begin{proof}
By (\ref{klmh}), (\ref{ftyt}) and (\ref{g2}), for all $u \in \mathbb{S}^{n-1}$, we have
\begin{align*}
h(\Pi_{\mu, p}^{\tau} K,u)^p+ h(\Pi_{\mu, p}^{-\tau} K,u)^p = h(\Pi_{\mu, p}^{+} K,u)^p+ h(\Pi_{\mu, p}^{-} K,u)^p,
\end{align*}
from this and (\ref{lpmz}), we obtain 
\begin{align*}
h(\Pi_{\mu, p}^{\tau} K+_p\Pi_{\mu, p}^{-\tau} K,u)^p = h(\Pi_{\mu, p}^{+} K+_p \Pi_{\mu, p}^{-} K,u)^p,
\end{align*}
that is
\begin{align*}
\Pi_{\mu, p}^{\tau} K+_p\Pi_{\mu, p}^{-\tau} K = \Pi_{\mu, p}^{+} K+_p\Pi_{\mu, p}^{-} K.
\end{align*}
This gives (\ref{zftzfa}).

Along the same lines, with (\ref{ftyt3}), (\ref{ftyt4}), (\ref{g2}) and (\ref{lpmz}), we can easily show that (\ref{zftzfb}) holds.
\end{proof}

Next, we prove the extreme inequality of the general $L_p$ $\mu$-projection body and the polar body of the general $L_p$ $\mu$-projection body.

\begin{theorem} \label{dliqw}
 Suppose that $\mu$ satisfies the homogeneity condition {\bf ({\rm I})} and $\omega$ is an even density function. If $K\in \mathscr{K}^n_o$, $p\geq 1$, and $\tau \in [-1,1]$, then
\begin{align}\label{the44}
\mu(\Pi_{\mu,p}^\ast K) \leq \mu(\Pi_{\mu,p}^{\tau,\ast} K) \leq \mu(\Pi_{\mu,p}^{\pm,\ast} K).
\end{align}
When $\tau \neq
0$, equality holds in the left inequality if and only if $\Pi_{\mu,p}^{\tau}K$ is origin-symmetric, and when $\tau \neq
\pm 1$, equality holds in the right inequality if and only if $\Pi_{\mu,p}^{\pm} K$ is origin-symmetric.
\end{theorem}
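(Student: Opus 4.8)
The plan is to reduce both inequalities in \eqref{the44} to a single one-dimensional convexity fact applied under the measure $\mu$, exploiting the $L_p$-Minkowski decomposition \eqref{tytmz}, the symmetry relation \eqref{pro1}, and the identity \eqref{zftzfa}. First I would record the key structural observation: by \eqref{tytmz} and \eqref{pro1}, for every $u\in\mathbb{S}^{n-1}$ the support functions satisfy
\begin{align*}
h(\Pi_{\mu,p}^{\tau}K,u)^p &= g_1(\tau)h(\Pi_{\mu,p}^{+}K,u)^p+g_2(\tau)h(\Pi_{\mu,p}^{-}K,u)^p,\\
h(\Pi_{\mu,p}^{-\tau}K,u)^p &= g_2(\tau)h(\Pi_{\mu,p}^{+}K,u)^p+g_1(\tau)h(\Pi_{\mu,p}^{-}K,u)^p,
\end{align*}
with $g_1(\tau)+g_2(\tau)=1$ and $g_1(\tau),g_2(\tau)\geq 0$. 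Writing $a=h(\Pi_{\mu,p}^{+}K,u)^p$ and $b=h(\Pi_{\mu,p}^{-}K,u)^p$, these say that $h(\Pi_{\mu,p}^{\tau}K,u)^p$ and $h(\Pi_{\mu,p}^{-\tau}K,u)^p$ are a pair of convex combinations of $a$ and $b$ lying symmetrically about $\tfrac{a+b}{2}=h(\Pi_{\mu,p}^{0}K,u)^p$ (using \eqref{tytzf}), and that $\{a,b\}$ are the extreme values attained at $\tau=\pm1$.

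Next I would pass to radial functions of the polars via \eqref{SRF}, so $\rho(\Pi_{\mu,p}^{\tau,\ast}K,u)=h(\Pi_{\mu,p}^{\tau}K,u)^{-1}$, and then use the homogeneity-induced polar formula \eqref{MUM}:
\begin{align*}
\mu(\Pi_{\mu,p}^{\tau,\ast}K)=s\int_{\mathbb{S}^{n-1}}\rho(\Pi_{\mu,p}^{\tau,\ast}K,u)^{1/s}\,d\mu(u)
=s\int_{\mathbb{S}^{n-1}}h(\Pi_{\mu,p}^{\tau}K,u)^{-1/s}\,d\mu(u).
\end{align*}
Because $\omega$ is even, the measure $\mu$ on $\mathbb{S}^{n-1}$ (via polar coordinates) is symmetric under $u\mapsto -u$; combining this with \eqref{pro1}, which gives $h(\Pi_{\mu,p}^{-\tau}K,u)=h(\Pi_{\mu,p}^{\tau}K,-u)$, I would conclude $\mu(\Pi_{\mu,p}^{-\tau,\ast}K)=\mu(\Pi_{\mu,p}^{\tau,\ast}K)$. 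Hence
\begin{align*}
\mu(\Pi_{\mu,p}^{\tau,\ast}K)=\tfrac{s}{2}\int_{\mathbb{S}^{n-1}}\Big(h(\Pi_{\mu,p}^{\tau}K,u)^{-1/s}+h(\Pi_{\mu,p}^{-\tau}K,u)^{-1/s}\Big)d\mu(u),
\end{align*}
and the whole theorem becomes a pointwise estimate of the integrand.

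The core of the argument is then the following one-variable claim: for fixed $a,b>0$ and the function $\phi(t)=t^{-1/(ps)}$ (note $t^{-1/s}=\phi(t^p)$, and $-1/(ps)<0$ so $\phi$ is convex and decreasing on $(0,\infty)$), the quantity
\begin{align*}
F(\tau):=\phi\big(g_1(\tau)a+g_2(\tau)b\big)+\phi\big(g_2(\tau)a+g_1(\tau)b\big)
\end{align*}
is minimized at $\tau=0$ (where $g_1=g_2=\tfrac12$) and maximized at $\tau=\pm1$ (where $\{g_1,g_2\}=\{1,0\}$), among $\tau\in[-1,1]$. This is exactly the statement that for a convex $\phi$, the sum $\phi(\lambda a+(1-\lambda)b)+\phi((1-\lambda)a+\lambda b)$ is a convex, symmetric function of $\lambda\in[0,1]$ about $\lambda=\tfrac12$; its minimum over $[0,1]$ is at $\lambda=\tfrac12$ and its maximum at the endpoints $\lambda\in\{0,1\}$. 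Since $\tau\mapsto g_1(\tau)$ is a monotone bijection of $[-1,1]$ onto $[0,1]$ (strictly increasing, with $g_1(-1)=0$, $g_1(0)=\tfrac12$, $g_1(1)=1$), the extremal values of $F$ in $\tau$ occur precisely at $\tau=0$ and $\tau=\pm1$. Integrating the pointwise inequality $F(0)\le F(\tau)\le F(\pm1)$ against $\tfrac{s}{2}\,d\mu(u)$ yields \eqref{the44}. For the equality cases I would track when $\phi$ is \emph{strictly} convex: strict convexity of $t\mapsto t^{-1/(ps)}$ forces, in the left inequality with $\tau\ne 0$, that $a=b$ for $\mu$-a.e.\ $u$, i.e.\ $h(\Pi_{\mu,p}^{+}K,u)=h(\Pi_{\mu,p}^{-}K,u)$ for all $u$, which by \eqref{klmh} and \eqref{ftyt} means $\Pi_{\mu,p}^{\tau}K=\Pi_{\mu,p}^{-\tau}K=-\Pi_{\mu,p}^{\tau}K$, i.e.\ $\Pi_{\mu,p}^{\tau}K$ is origin-symmetric; for the right inequality with $\tau\ne\pm1$ one similarly gets $a=b$ for a.e.\ $u$, hence $\Pi_{\mu,p}^{\pm}K$ is origin-symmetric (using \eqref{tzfgx}).

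The main obstacle is the one-variable convexity lemma together with its sharp equality analysis — specifically, verifying carefully that $g(\tau):=g_1(\tau)$ is a strictly increasing bijection $[-1,1]\to[0,1]$ (immediate from $g_1(\tau)=(1+\tau)^p/((1+\tau)^p+(1-\tau)^p)$ for $p\ge1$) and that $\lambda\mapsto \phi(\lambda a+(1-\lambda)b)+\phi((1-\lambda)a+\lambda b)$ is genuinely convex and symmetric so that interior-versus-endpoint comparisons are valid in both directions; everything else (the polar-coordinate formula, the evenness symmetrization, and the passage from the pointwise inequality to the measure inequality) is routine given the machinery already assembled in Sections \ref{SEC2} and \ref{SEC3}.
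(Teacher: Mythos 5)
Your proposal is correct, but it takes a genuinely different route from the paper's. The paper argues at the level of bodies: it writes $\Pi_{\mu,p}^{\tau,*}K=g_1(\tau)\circ\Pi_{\mu,p}^{+,*}K\,\widetilde{+}_{-p}\,g_2(\tau)\circ\Pi_{\mu,p}^{-,*}K$ via \eqref{tytmz} and \eqref{hrzh}, and $\Pi_{\mu,p}^{*}K=\tfrac12\circ\Pi_{\mu,p}^{\tau,*}K\,\widetilde{+}_{-p}\,\tfrac12\circ\Pi_{\mu,p}^{-\tau,*}K$, then applies the dual Brunn--Minkowski inequality for the $L_{-p}$-harmonic radial combination (Lemma \ref{rbmi}) twice, using evenness of $\omega$ only to equate $\mu(\Pi_{\mu,p}^{+,*}K)=\mu(\Pi_{\mu,p}^{-,*}K)$ and $\mu(\Pi_{\mu,p}^{\tau,*}K)=\mu(\Pi_{\mu,p}^{-\tau,*}K)$; the equality cases then come from the ``dilatates'' condition in that lemma. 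You instead expand $\mu(\Pi_{\mu,p}^{\tau,*}K)=s\int_{\mathbb{S}^{n-1}} h(\Pi_{\mu,p}^{\tau}K,u)^{-1/s}\,d\mu(u)$ via \eqref{MUM} and \eqref{SRF} and run a pointwise Jensen argument for the strictly convex, decreasing map $t\mapsto t^{-1/(ps)}$ applied to the convex combinations $g_1(\tau)a+g_2(\tau)b$. Your route is more elementary and self-contained: it bypasses Lemma \ref{rbmi} (whose proof rests on Minkowski's integral inequality), it makes the equality analysis transparent (strict convexity forces $a=b$, i.e.\ $h(\Pi_{\mu,p}^{+}K,\cdot)=h(\Pi_{\mu,p}^{-}K,\cdot)$, which is exactly origin-symmetry via \eqref{tzfgx} and \eqref{pro1}), and in fact for the right-hand inequality the single Jensen step $\phi(g_1a+g_2b)\le g_1\phi(a)+g_2\phi(b)$ already suffices without the $\pm\tau$ symmetrization. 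What the paper's approach buys is reuse of machinery already needed elsewhere in the article and a direct parallel with Haberl--Schuster's Lebesgue-measure proof. The one point to tighten in your version --- which the paper's ``dilatates'' condition also glosses over --- is that strict convexity yields $a=b$ only $\mu$-almost everywhere on $\mathbb{S}^{n-1}$; you should invoke positivity of the continuous density $\omega$ and continuity of support functions to upgrade this to equality everywhere before concluding that $\Pi_{\mu,p}^{\tau}K$ (respectively $\Pi_{\mu,p}^{\pm}K$) is origin-symmetric.
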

\begin{proof}
Clearly, if $\tau=\pm 1$ in (\ref{the44}), the inequality on the right of (\ref{the44}) is an identity. Let
$-1<\tau<1$. From (\ref{tytmz}) and (\ref{hrzh}), we get
\begin{equation}\label{maop}
\Pi_{\mu,p}^{\tau,*}K=g_1(\tau) \circ
\Pi_{\mu,p}^{+,*}K \widetilde{+}_{-p}\, g_2(\tau) \circ \Pi_{\mu,p}^{-,*}K.
\end{equation}
From Lemma \ref{rbmi}, we obtain
\begin{align*}
\mu(\Pi_{\mu,p}^{\tau,*}K)^{-sp}=\mu(g_1(\tau) \circ
\Pi_{\mu,p}^{+,*}K \widetilde{+}_{-p}\, g_2(\tau) \circ \Pi_{\mu,p}^{-,*}K)^{-sp} \geq g_1(\tau)\mu(\Pi_{\mu,p}^{+,*}K)^{-sp} + g_2(\tau) \mu(\Pi_{\mu,p}^{-,*}K)^{-sp},
\end{align*}
that is,
\begin{equation}\label{ineq1}
\mu(\Pi_{\mu,p}^{\tau,*}K)\leq \mu(\Pi_{\mu,p}^{\pm,*}K),
\end{equation}
 equality holds if and only if $\Pi_{\mu,p}^{+}K$ and $\Pi_{\mu,p}^{-}K$ are dilatates. This and (\ref{zxtds}) imply that the equality holds on the right of (\ref{the44}) if and only if $\Pi_{\mu,p}^{+}K$ and $\Pi_{\mu,p}^{-}K$ are origin-symmetric.

Next, let's prove the other part of this theorem. If $\tau= 0$, the inequality on the left of (\ref{the44}) is an identity. Let $\tau\neq 0$. By (\ref{RA}), (\ref{maop}) and (\ref{g2}), we deduce
\begin{align}\label{jxt1}
\rho(\Pi_{\mu,p}^{\tau,*}K,u)^{-p}=g_1(\tau) \rho(\Pi_{\mu,p}^{+,*}K,u)^{-p} +\, g_2(\tau) \rho(\Pi_{\mu,p}^{-,*}K,u)^{-p}
\end{align}
and 
\begin{align}\label{jxt2}
\rho(\Pi_{\mu,p}^{-\tau,*}K,u)^{-p}=g_2(\tau) \rho(\Pi_{\mu,p}^{+,*}K,u)^{-p} +\, g_1(\tau) \rho(\Pi_{\mu,p}^{-,*}K,u)^{-p}.
\end{align}
Add (\ref{jxt1}) and (\ref{jxt2}) together, use (\ref{g2}) again to get
\begin{align}
\frac{1}{2}\rho(\Pi_{\mu,p}^{-\tau,*}K,u)^{-p}+\frac{1}{2}\rho(\Pi_{\mu,p}^{\tau,*}K,u)^{-p}=\frac{1}{2}\rho(\Pi_{\mu,p}^{+,*}K,u)^{-p} +\, \frac{1}{2} \rho(\Pi_{\mu,p}^{-,*}K,u)^{-p},
\end{align}
combining the above formula with (\ref{RA}), (\ref{SRF}), (\ref{lpmz}), (\ref{tytzf}) and using (\ref{SRF}) again, we can deduce
\begin{align*}
\rho\left(\frac{1}{2}\circ\Pi_{\mu,p}^{-\tau,*}K \widetilde{+}_{-p}\,\frac{1}{2}\circ\Pi_{\mu,p}^{\tau,*}K,u\right)^{-p}=\rho(\Pi_{\mu,p}^{*}K,u)^{-p},
\end{align*}
hence,
\begin{align*}
\Pi_{\mu,p}^{*}K=\frac{1}{2}\circ\Pi_{\mu,p}^{-\tau,*}K \widetilde{+}_{-p}\, \frac{1}{2}\circ\Pi_{\mu,p}^{\tau,*}K.
\end{align*}
From this and Lemma \ref{rbmi}, we get
\begin{align*}
\mu(\Pi_{\mu,p}^{*}K)^{-sp}\geq\frac{1}{2}\mu(\Pi_{\mu,p}^{-\tau,*}K)^{-sp} +\frac{1}{2}\mu(\Pi_{\mu,p}^{\tau,*}K)^{-sp},
\end{align*}
by (\ref{pro1}), we obtain 
\begin{align*}
\mu(\Pi_{\mu,p}^{*}K)\leq\mu(\Pi_{\mu,p}^{\tau,*}K),
\end{align*}
with equality if and only if $\Pi_{\mu,p}^{\tau}K$ and $\Pi_{\mu,p}^{-\tau}K$ are dilatates. By (\ref{pro1}), we know that the equality holds on the left of the (\ref{the44}) if and only if $\Pi_{\mu,p}^{\tau}K$ is origin-symmetric.
\end{proof}

\begin{theorem}
 Suppose that $\mu$ satisfies the homogeneity condition {\bf ({\rm I})} and concavity condition {\bf ({\rm II})} and $\omega$ is an even density function. If $K\in \mathscr{K}^n_o$, $p\geq 1$, and $\tau \in [-1,1]$, then
\begin{align*}
\mu(\Pi_{\mu,p} K) \geq \mu(\Pi_{\mu,p}^\tau K) \geq \mu(\Pi_{\mu,p}^\pm K).
\end{align*}
\end{theorem}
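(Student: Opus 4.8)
The plan is to run the argument of Theorem~\ref{dliqw} ``the other way around'': where that proof combined the $L_p$-harmonic radial addition with the dual Brunn--Minkowski inequality of Lemma~\ref{rbmi}, here I will combine the $L_p$-Minkowski addition with the Brunn--Minkowski type inequality of Lemma~\ref{LEMBM}, which is precisely why the concavity condition {\bf ({\rm II})} is now needed. The one observation that makes everything clean is that in Lemma~\ref{LEMBM} the prefactor $\frac{1+(\alpha+\beta)(p-1)}{p}$ collapses to $1$ as soon as $\alpha+\beta=1$, and each of the two $L_p$-Minkowski decompositions of $\Pi_{\mu,p}^{\tau}K$ I will invoke (namely (\ref{tytmz}) and the midpoint formula below) has weights summing to $1$. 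I will also use that an even density $\omega$ makes $\mu$ invariant under $x\mapsto -x$, i.e. $\mu(-A)=\mu(A)$ for every Borel set $A$; this is the only place the evenness hypothesis enters, and it lets us identify $\mu(\Pi_{\mu,p}^{-}K)$ with $\mu(\Pi_{\mu,p}^{+}K)=:\mu(\Pi_{\mu,p}^{\pm}K)$ and $\mu(\Pi_{\mu,p}^{-\tau}K)$ with $\mu(\Pi_{\mu,p}^{\tau}K)$.

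\emph{Right-hand inequality.} For $\tau=\pm 1$ it is an identity, so assume $-1<\tau<1$. By (\ref{tytmz}) we have $\Pi_{\mu,p}^{\tau}K=g_1(\tau)\cdot\Pi_{\mu,p}^{+}K+_p g_2(\tau)\cdot\Pi_{\mu,p}^{-}K$ with $g_1(\tau)+g_2(\tau)=1$ by (\ref{g2}). Applying Lemma~\ref{LEMBM} with $\alpha=g_1(\tau)$, $\beta=g_2(\tau)$, $K\mapsto\Pi_{\mu,p}^{+}K$, $L\mapsto\Pi_{\mu,p}^{-}K$ (prefactor $=1$) gives
\[
\mu(\Pi_{\mu,p}^{\tau}K)^s\ \ge\ g_1(\tau)\,\mu(\Pi_{\mu,p}^{+}K)^s+g_2(\tau)\,\mu(\Pi_{\mu,p}^{-}K)^s .
\]
By (\ref{tzfgx}) one has $\Pi_{\mu,p}^{-}K=-\Pi_{\mu,p}^{+}K$, so evenness of $\omega$ yields $\mu(\Pi_{\mu,p}^{-}K)=\mu(\Pi_{\mu,p}^{+}K)=\mu(\Pi_{\mu,p}^{\pm}K)$, and the right side equals $(g_1(\tau)+g_2(\tau))\mu(\Pi_{\mu,p}^{\pm}K)^s=\mu(\Pi_{\mu,p}^{\pm}K)^s$. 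Since $s>0$, this gives $\mu(\Pi_{\mu,p}^{\tau}K)\ge\mu(\Pi_{\mu,p}^{\pm}K)$.

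\emph{Left-hand inequality.} For $\tau=0$ it is an identity, so assume $\tau\neq 0$. Combining (\ref{zftzfa}) with (\ref{tytzf}) at the level of $p$-th powers of support functions (via (\ref{klmh}), (\ref{ftyt}) and (\ref{lpmz})) shows that
\[
\Pi_{\mu,p}K=\frac12\cdot\Pi_{\mu,p}^{\tau}K+_p\frac12\cdot\Pi_{\mu,p}^{-\tau}K ,
\]
i.e. $\Pi_{\mu,p}K$ is the $L_p$-Minkowski midpoint of $\Pi_{\mu,p}^{\tau}K$ and $\Pi_{\mu,p}^{-\tau}K$. Applying Lemma~\ref{LEMBM} with $\alpha=\beta=\frac12$ (prefactor $=1$ again) yields $\mu(\Pi_{\mu,p}K)^s\ge\frac12\mu(\Pi_{\mu,p}^{\tau}K)^s+\frac12\mu(\Pi_{\mu,p}^{-\tau}K)^s$. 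By (\ref{pro1}), $\Pi_{\mu,p}^{-\tau}K=-\Pi_{\mu,p}^{\tau}K$, so evenness of $\omega$ gives $\mu(\Pi_{\mu,p}^{-\tau}K)=\mu(\Pi_{\mu,p}^{\tau}K)$; hence the right side is $\mu(\Pi_{\mu,p}^{\tau}K)^s$ and, as $s>0$, $\mu(\Pi_{\mu,p}K)\ge\mu(\Pi_{\mu,p}^{\tau}K)$.

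\emph{Main obstacle.} There is no serious obstacle beyond careful bookkeeping: the argument rests entirely on the collapse of the prefactor in Lemma~\ref{LEMBM} when the weights sum to $1$, and on the reflection invariance $\mu(-A)=\mu(A)$ supplied by the evenness of $\omega$; the price for dropping the polar is that one now needs the concavity condition {\bf ({\rm II})} (through Lemma~\ref{LEMBM}) rather than only {\bf ({\rm I})}. If one additionally wants the equality characterizations parallel to Theorem~\ref{dliqw}, they can be tracked through the equality case of the $L_p$-Minkowski inequality (\ref{lphtmi}) that underlies Lemma~\ref{LEMBM}: equality on the right would force $\Pi_{\mu,p}^{+}K$ and $\Pi_{\mu,p}^{-}K$ to be homothetic, hence origin-symmetric when $\tau\neq\pm 1$, and equality on the left would force $\Pi_{\mu,p}^{\tau}K$ and $\Pi_{\mu,p}^{-\tau}K$ to be homothetic, i.e. $\Pi_{\mu,p}^{\tau}K$ origin-symmetric when $\tau\neq 0$.
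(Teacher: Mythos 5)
Your argument coincides with the paper's own proof: both inequalities are obtained by applying Lemma \ref{LEMBM} to the decompositions $\Pi_{\mu,p}^{\tau}K=g_1(\tau)\cdot\Pi_{\mu,p}^{+}K+_p g_2(\tau)\cdot\Pi_{\mu,p}^{-}K$ and $\Pi_{\mu,p}K=\frac{1}{2}\cdot\Pi_{\mu,p}^{\tau}K+_p\frac{1}{2}\cdot\Pi_{\mu,p}^{-\tau}K$, with the prefactor collapsing to $1$ since the weights sum to $1$, and the evenness of $\omega$ identifying $\mu(\Pi_{\mu,p}^{-}K)$ with $\mu(\Pi_{\mu,p}^{+}K)$ and $\mu(\Pi_{\mu,p}^{-\tau}K)$ with $\mu(\Pi_{\mu,p}^{\tau}K)$ via (\ref{tzfgx}) and (\ref{pro1}). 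The proof is correct and takes essentially the same route as the paper.
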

\begin{proof}
From Lemma \ref{LEMBM}, (\ref{tytzf}), (\ref{tzfgx}) and (\ref{g2}), we get
\begin{align*}
\frac{1+(g_1(\tau)+ g_2(\tau))(p-1)}{p}\mu(\Pi_{\mu,p}^{\tau}K)^s&\geq g_1(\tau)\mu(\Pi_{\mu,p}^+K)^{s}+ g_2(\tau)\mu(\Pi_{\mu,p}^-K)^{s}\\
&=\mu(\Pi_{\mu,p}^\pm K)^{s},
\end{align*}
i.e.,
\begin{align*}
\mu(\Pi_{\mu,p}^{\tau}K)\geq\mu(\Pi_{\mu,p}^\pm K).
\end{align*}

In order to show that the inequality on the left holds, by (\ref{zftzfa}) and (\ref{tytzf}), it follows that
\begin{align*}
\Pi_{\mu, p}K=\frac{1}{2}\cdot\Pi_{\mu, p}^{\tau} K+_p\frac{1}{2}\cdot\Pi_{\mu, p}^{-\tau} K,
\end{align*}
from this, and using Lemma \ref{LEMBM}  and  (\ref{pro1}), we can immediately obtain
\begin{align*}
\mu(\Pi_{\mu,p}K)\geq\mu(\Pi_{\mu,p}^{\tau} K).
\end{align*}
This proves our desire inequality.
\end{proof}

The proof of the extreme inequality of the general $L_p$ $\mu$-centroid body is similar to that of the general $L_p$ $\mu$-projection body to a great extent. We only give the sketch of proof of the following theorems. 
\begin{theorem}\label{dliqw2}
 Suppose that $\mu$ satisfies the  homogeneity condition {\bf ({\rm I})} and $\omega$ is an even density function. If $K\in \mathscr{S}^n_o$, $p\geq 1$, and $\tau \in [-1,1]$, then
\begin{align}\label{the46}
\mu(\Gamma_{\mu,p}^\ast K) \leq \mu(\Gamma_{\mu,p}^{\tau,\ast} K) \leq \mu(\Gamma_{\mu,p}^{\pm,\ast} K).
\end{align}
When $\tau \neq
0$, equality holds in the left inequality if and only if $\Gamma_{\mu,p}^{\tau}K$ is origin-symmetric, and when $\tau \neq
\pm 1$, equality holds in the right inequality if and only if $\Gamma_{\mu,p}^{\pm} K$ is origin-symmetric.
\end{theorem}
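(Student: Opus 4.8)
The plan is to mimic the proof of Theorem~\ref{dliqw} essentially verbatim, with the general $L_p$ $\mu$-projection bodies replaced by the general $L_p$ $\mu$-centroid bodies and with the structural identities of Section~\ref{SEC3} used in place of their projection-body counterparts. The two inputs that make this work are the decomposition $(\ref{mtaupm})$ together with the polar rule $(\ref{hrzh})$, and the Brunn--Minkowski inequality for the $L_p$ dual mixed $\mu$-measure, Lemma~\ref{rbmi}. I would record at the outset that, since $\omega$ is even, $\mu$ is symmetric, so $\mu(-M)=\mu(M)$ for every body $M$; combined with $(\ref{zxtds})$ and $(\ref{pro2})$ this yields
\begin{align*}
\mu(\Gamma_{\mu,p}^{+,\ast}K)=\mu(\Gamma_{\mu,p}^{-,\ast}K),\qquad \mu(\Gamma_{\mu,p}^{\tau,\ast}K)=\mu(\Gamma_{\mu,p}^{-\tau,\ast}K),
\end{align*}
which is what makes the two extremes in $(\ref{the46})$ well defined as measures of single bodies.

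For the right-hand inequality I would dispose of $\tau=\pm 1$ as an identity and assume $-1<\tau<1$, so that $g_1(\tau),g_2(\tau)>0$. Taking polars in $(\ref{mtaupm})$ via $(\ref{hrzh})$ gives
\begin{align*}
\Gamma_{\mu,p}^{\tau,\ast}K=g_1(\tau)\circ\Gamma_{\mu,p}^{+,\ast}K\,\widetilde{+}_{-p}\,g_2(\tau)\circ\Gamma_{\mu,p}^{-,\ast}K,
\end{align*}
and then Lemma~\ref{rbmi}, applied with the negative exponent $-p$ (so that its inequality is \emph{reversed}), yields
\begin{align*}
\mu(\Gamma_{\mu,p}^{\tau,\ast}K)^{-sp}\ge g_1(\tau)\mu(\Gamma_{\mu,p}^{+,\ast}K)^{-sp}+g_2(\tau)\mu(\Gamma_{\mu,p}^{-,\ast}K)^{-sp}=\mu(\Gamma_{\mu,p}^{\pm,\ast}K)^{-sp},
\end{align*}
the last equality using $g_1(\tau)+g_2(\tau)=1$ and the symmetry of $\mu$. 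Since $-sp<0$ this rearranges to $\mu(\Gamma_{\mu,p}^{\tau,\ast}K)\le\mu(\Gamma_{\mu,p}^{\pm,\ast}K)$; the equality case of Lemma~\ref{rbmi} forces $\Gamma_{\mu,p}^{+}K$ and $\Gamma_{\mu,p}^{-}K$ to be dilatates, which with $(\ref{zxtds})$ means $\Gamma_{\mu,p}^{\pm}K$ is origin-symmetric.

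For the left-hand inequality I would treat $\tau=0$ as an identity and assume $\tau\ne 0$, so $g_1(\tau)\ne g_2(\tau)$. Writing $(\ref{mtaupm})$ for $\tau$ and for $-\tau$, passing to radial functions of polars via $(\ref{hrzh})$, $(\ref{SRF})$, $(\ref{RA})$ and $(\ref{g2})$, and averaging the two resulting identities while using $g_1(\tau)+g_2(\tau)=1$, I would obtain
\begin{align*}
\tfrac12\rho(\Gamma_{\mu,p}^{\tau,\ast}K,u)^{-p}+\tfrac12\rho(\Gamma_{\mu,p}^{-\tau,\ast}K,u)^{-p}=\tfrac12\rho(\Gamma_{\mu,p}^{+,\ast}K,u)^{-p}+\tfrac12\rho(\Gamma_{\mu,p}^{-,\ast}K,u)^{-p}=\rho(\Gamma_{\mu,p}^{\ast}K,u)^{-p},
\end{align*}
the last identification coming from the polar form of $(\ref{tytzfc})$ (equivalently, of $(\ref{zftzfb})$ after halving). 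Hence $\Gamma_{\mu,p}^{\ast}K=\tfrac12\circ\Gamma_{\mu,p}^{\tau,\ast}K\,\widetilde{+}_{-p}\,\tfrac12\circ\Gamma_{\mu,p}^{-\tau,\ast}K$, and Lemma~\ref{rbmi} (again reversed) together with $\mu(\Gamma_{\mu,p}^{\tau,\ast}K)=\mu(\Gamma_{\mu,p}^{-\tau,\ast}K)$ gives $\mu(\Gamma_{\mu,p}^{\ast}K)\le\mu(\Gamma_{\mu,p}^{\tau,\ast}K)$; the equality case forces $\Gamma_{\mu,p}^{\tau}K$ and $\Gamma_{\mu,p}^{-\tau}K$ to be dilatates, hence $\Gamma_{\mu,p}^{\tau}K$ origin-symmetric by $(\ref{pro2})$.

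I expect no conceptual difficulty, since every step has a literal analogue in Theorem~\ref{dliqw}. The one point that genuinely requires care is the sign bookkeeping around the $L_{-p}$-harmonic radial combination: one must invoke Lemma~\ref{rbmi} in its \emph{reversed} form for the negative exponent $-p$, and then combine this with $-sp<0$ to conclude $\le$ rather than $\ge$ for the measures. A second small point is the passage from the equality characterizations of Lemma~\ref{rbmi} to the stated symmetry: one uses that a convex body which is a positive dilatate of its reflection through the origin must be origin-symmetric, so that "$\Gamma_{\mu,p}^{+}K,\Gamma_{\mu,p}^{-}K$ dilatates" and "$\Gamma_{\mu,p}^{\tau}K,\Gamma_{\mu,p}^{-\tau}K$ dilatates" become the claimed statements via $(\ref{zxtds})$ and $(\ref{pro2})$.
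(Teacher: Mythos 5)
Your proposal is correct and follows essentially the same route as the paper's own proof: polarize the decomposition (\ref{mtaupm}) via (\ref{hrzh}) and apply the reversed case of Lemma \ref{rbmi} for the right inequality, then average the $\tau$ and $-\tau$ radial identities to express $\Gamma_{\mu,p}^{\ast}K$ as a harmonic radial mean of $\Gamma_{\mu,p}^{\tau,\ast}K$ and $\Gamma_{\mu,p}^{-\tau,\ast}K$ for the left one, with the equality cases handled through (\ref{zxtds}) and (\ref{pro2}). Your version is in fact slightly cleaner, since you make explicit the use of the evenness of $\mu$ (which gives $\mu(\Gamma_{\mu,p}^{+,\ast}K)=\mu(\Gamma_{\mu,p}^{-,\ast}K)$ and $\mu(\Gamma_{\mu,p}^{\tau,\ast}K)=\mu(\Gamma_{\mu,p}^{-\tau,\ast}K)$) and you state the final conclusion of the left-hand argument in the correct direction, $\mu(\Gamma_{\mu,p}^{\ast}K)\le\mu(\Gamma_{\mu,p}^{\tau,\ast}K)$, where the paper's text contains an evident transcription slip.
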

\begin{proof}
Obviously, if $\tau=\pm 1$ in (\ref{the46}), the inequality on the right of (\ref{the46}) is an identity. Let
$-1<\tau<1$. From (\ref{mtaupm}) and (\ref{hrzh}), it shows that
\begin{equation}\label{maop2}
\Gamma_{\mu,p}^{\tau,*}K=g_1(\tau) \circ
\Gamma_{\mu,p}^{+,*}K \widetilde{+}_{-p}\, g_2(\tau) \circ \Gamma_{\mu,p}^{-,*}K,
\end{equation}
from this, with Lemma \ref{rbmi} and (\ref{hrzh}), we deduce
\begin{equation*}
\mu(\Gamma_{\mu,p}^{\pm,*}K)\geq \mu(\Gamma_{\mu,p}^{\tau,*}K),
\end{equation*}
 equality holds if and only if $\Gamma_{\mu,p}^{+}K$ and $\Gamma_{\mu,p}^{-}K$ are dilatates. This and (\ref{tzfgx}) imply that the equality holds on the right of the (\ref{the46}) if and only if $\Gamma_{\mu,p}^{+}K$ and $\Gamma_{\mu,p}^{-}K$ are origin-symmetric.

Next, let's prove the left part of (\ref{the46}). If $\tau= 0$, the inequality on the left of (\ref{the46}) is an identity. Let $\tau\neq 0$. By (\ref{RA}), (\ref{maop2}) and (\ref{g2}), we deduce
\begin{align}\label{jxt12}
\rho(\Gamma_{\mu,p}^{\tau,*}K,u)^{-p}=g_1(\tau) \rho(\Gamma_{\mu,p}^{+,*}K,u)^{-p} +\, g_2(\tau) \rho(\Gamma_{\mu,p}^{-,*}K,u)^{-p}
\end{align}
and 
\begin{align}\label{jxt22}
\rho(\Gamma_{\mu,p}^{-\tau,*}K,u)^{-p}=g_2(\tau) \rho(\Gamma_{\mu,p}^{+,*}K,u)^{-p} +\, g_1(\tau) \rho(\Gamma_{\mu,p}^{-,*}K,u)^{-p}.
\end{align}
By (\ref{jxt12}) and (\ref{jxt22}), useing (\ref{g2}) again, 
combining with (\ref{RA}), (\ref{SRF}), (\ref{lpmz}), (\ref{tytzfc}) and using (\ref{SRF}) again, we can deduce
\begin{align*}
\rho\left(\frac{1}{2}\circ\Gamma_{\mu,p}^{-\tau,*}K \widetilde{+}_{-p}\,\frac{1}{2}\circ\Gamma_{\mu,p}^{\tau,*}K,u\right)^{-p}&=\frac{1}{2}\rho(\Gamma_{\mu,p}^{+,*}K,u)^{-p} +\, \frac{1}{2} \rho(\Gamma_{\mu,p}^{-,*}K,u)^{-p}\\
&=\rho(\Gamma_{\mu,p}^{*}K,u)^{-p},
\end{align*}
therefore,
\begin{align*}
\Gamma_{\mu,p}^{*}K=\frac{1}{2}\circ\Gamma_{\mu,p}^{-\tau,*}K \widetilde{+}_{-p}\, \frac{1}{2}\circ\Gamma_{\mu,p}^{\tau,*}K.
\end{align*}
From this, with Lemma \ref{rbmi} and (\ref{pro2}), we deduce
\begin{align*}
\mu(\Gamma_{\mu,p}^{\tau,*}K)\leq \mu(\Gamma_{\mu,p}^{*}K).
\end{align*}
with equality if and only if $\Gamma_{\mu,p}^{\tau}K$ and $\Gamma_{\mu,p}^{-\tau}K$ are dilatates. By (\ref{pro2}), we know that the equality holds on the left of the (\ref{the46}) if and only if $\Gamma_{\mu,p}^{\tau}K$ is origin-symmetric.
\end{proof}

\begin{theorem}
 Suppose that $\mu$ satisfies the homogeneity condition {\bf ({\rm I})} and concavity condition {\bf ({\rm II})} and $\omega$ is an even density function. If $K\in \mathscr{S}^n_o$, $p\geq 1$ and $\tau \in [-1,1]$, then
\begin{align*}
\mu(\Gamma_{\mu,p} K) \geq \mu(\Gamma_{\mu,p}^\tau K) \geq \mu(\Gamma_{\mu,p}^\pm K).
\end{align*}
\end{theorem}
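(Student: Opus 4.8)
The plan is to transcribe, almost verbatim, the proof of the corresponding inequality for $\Pi_{\mu,p}$, replacing projection bodies by centroid bodies and invoking the $\Gamma$-analogues of the structural identities from Section~\ref{SEC3}: (\ref{mtaupm}), (\ref{zxtds}), (\ref{tytzfc}), (\ref{zftzfb}) and (\ref{pro2}). The two engines are Lemma~\ref{LEMBM} (the Brunn--Minkowski type inequality for $L_p$-Minkowski combinations with respect to $\mu$) and the observation that, since the density $\omega$ is even, the measure $\mu$ is symmetric, so $\mu(-A)=\mu(A)$ for every Borel set $A$; in particular $\mu(-C)=\mu(C)$ for any convex body $C$.

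For the right-hand inequality $\mu(\Gamma_{\mu,p}^\tau K)\geq\mu(\Gamma_{\mu,p}^\pm K)$, I would apply Lemma~\ref{LEMBM} with $\alpha=g_1(\tau)$, $\beta=g_2(\tau)$, and with $\Gamma_{\mu,p}^+K$, $\Gamma_{\mu,p}^-K$ in the roles of $K$, $L$. By (\ref{mtaupm}), the combination $g_1(\tau)\cdot\Gamma_{\mu,p}^+K+_p g_2(\tau)\cdot\Gamma_{\mu,p}^-K$ is exactly $\Gamma_{\mu,p}^\tau K$, and by (\ref{g2}) we have $g_1(\tau)+g_2(\tau)=1$, so the scalar factor $\tfrac{1+(g_1(\tau)+g_2(\tau))(p-1)}{p}$ on the left of Lemma~\ref{LEMBM} equals $1$. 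On the right, (\ref{zxtds}) gives $\Gamma_{\mu,p}^-K=-\Gamma_{\mu,p}^+K$, hence $\mu(\Gamma_{\mu,p}^-K)=\mu(\Gamma_{\mu,p}^+K)=\mu(\Gamma_{\mu,p}^\pm K)$ by symmetry of $\mu$, so the right side collapses to $(g_1(\tau)+g_2(\tau))\mu(\Gamma_{\mu,p}^\pm K)^s=\mu(\Gamma_{\mu,p}^\pm K)^s$. This yields $\mu(\Gamma_{\mu,p}^\tau K)^s\geq\mu(\Gamma_{\mu,p}^\pm K)^s$, and the claim follows.

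For the left-hand inequality $\mu(\Gamma_{\mu,p}K)\geq\mu(\Gamma_{\mu,p}^\tau K)$, I would first combine (\ref{zftzfb}) with (\ref{tytzfc}) to obtain $\Gamma_{\mu,p}K=\tfrac12\cdot\Gamma_{\mu,p}^\tau K+_p\tfrac12\cdot\Gamma_{\mu,p}^{-\tau}K$: passing to $p$-th powers of support functions, (\ref{tytzfc}) expresses $h(\Gamma_{\mu,p}K,\cdot)^p$ as the average of $h(\Gamma_{\mu,p}^+K,\cdot)^p$ and $h(\Gamma_{\mu,p}^-K,\cdot)^p$, which by (\ref{zftzfb}) equals the average of $h(\Gamma_{\mu,p}^\tau K,\cdot)^p$ and $h(\Gamma_{\mu,p}^{-\tau}K,\cdot)^p$. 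Then I apply Lemma~\ref{LEMBM} with $\alpha=\beta=\tfrac12$ and with $\Gamma_{\mu,p}^\tau K$, $\Gamma_{\mu,p}^{-\tau}K$ in the roles of $K$, $L$; using (\ref{pro2}), namely $\Gamma_{\mu,p}^{-\tau}K=-\Gamma_{\mu,p}^\tau K$, together with symmetry of $\mu$, the right side equals $\mu(\Gamma_{\mu,p}^\tau K)^s$, while $\alpha+\beta=1$ again makes the left scalar factor $1$. Hence $\mu(\Gamma_{\mu,p}K)^s\geq\mu(\Gamma_{\mu,p}^\tau K)^s$, as desired.

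I do not anticipate a genuine obstacle: each ingredient --- Lemma~\ref{LEMBM} and the identities (\ref{mtaupm}), (\ref{zxtds}), (\ref{tytzfc}), (\ref{zftzfb}), (\ref{pro2}) --- has already been established for centroid bodies, so the argument is routine bookkeeping. The only points needing a word of care are the use of $\mu(-A)=\mu(A)$, which rests on the evenness of $\omega$, and noting that $g_1(\tau)+g_2(\tau)=1$ is precisely what makes the prefactor in Lemma~\ref{LEMBM} equal to $1$ in both applications.
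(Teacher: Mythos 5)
Your proposal is correct and follows essentially the same route as the paper: the right-hand inequality via Lemma \ref{LEMBM} applied to the decomposition (\ref{mtaupm}) together with (\ref{zxtds}), (\ref{g2}) and the evenness of $\omega$, and the left-hand inequality via $\Gamma_{\mu,p}K=\tfrac12\cdot\Gamma_{\mu,p}^{\tau}K+_p\tfrac12\cdot\Gamma_{\mu,p}^{-\tau}K$ (from (\ref{zftzfb}) and (\ref{tytzfc})) combined with Lemma \ref{LEMBM} and (\ref{pro2}). No gaps; your bookkeeping of the prefactor $\tfrac{1+(\alpha+\beta)(p-1)}{p}=1$ and of $\mu(-A)=\mu(A)$ matches the paper's (more tersely stated) argument.
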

\begin{proof}
By Lemma \ref{LEMBM}, (\ref{mtaupm}), (\ref{zxtds}) and (\ref{g2}), we get
\begin{align*}
\mu(\Gamma_{\mu,p}^{\tau}K)\geq\mu(\Gamma_{\mu,p}^\pm K).
\end{align*}
To prove the other part of the inequality, by (\ref{zftzfb}) and (\ref{tytzfc}), it follows that
\begin{align*}
\Gamma_{\mu, p}K=\frac{1}{2}\cdot\Gamma_{\mu, p}^{\tau} K+_p\frac{1}{2}\cdot\Gamma_{\mu, p}^{-\tau} K,
\end{align*}
from this, and using Lemma \ref{LEMBM}, (\ref{pro2}) and (\ref{g2}), we obtain 
\begin{align*}
\mu(\Gamma_{\mu,p}K)\geq\mu(\Gamma_{\mu,p}^{\tau} K).
\end{align*}
This proves our desire inequality.
\end{proof}

\section{Brunn-Minkowski Type Inequalities and Monotone inequalities}\label{SEC5}

The $L_p$-Blaschke additions of measure is defined by Wu \cite{WDH1}. Suppose that $\mu$ satisfies the homogeneity condition {\bf ({\rm I})} and concavity condition {\bf ({\rm II})} and $\omega$ is an even density function. Let $p \geq 1$ and $p \neq 1/s$,  for $K, L \in \mathscr{K}_{o}^{n}$, the $L_{p}$-Blaschke addition of measures, $K \#_{\mu, p} L$  is a convex body, defined by
\begin{align}\label{glpba}
S_{\mu, p}\left(K \#_{\mu, p} L, \cdot\right)=S_{\mu, p}(K, \cdot)+S_{\mu, p}(L, \cdot).
\end{align}

Next, we prove Brunn-Minkowski type inequalities of the general $L_p$ $\mu$-projection body and its polar body with respect to $L_p$-Blaschke addition of measures.

\begin{theorem}\label{THM51}
 Suppose that $\mu$ satisfies the homogeneity condition {\bf ({\rm I})} and concavity condition {\bf ({\rm II})} and $\omega$ is an even density function. Let $p \geq 1$ and $p \neq 1/s$. If $K, L \in \mathscr{K}_{o}^{n}$ and $\tau \in [-1,1]$, then
\begin{align*}
(2-1/p)\mu(\Pi_{\mu,p}^{\tau}(K \#_{\mu, p} L))^s \geq \mu(\Pi_{\mu,p}^{\tau}K)^{s}+\mu(\Pi_{\mu,p}^{\tau}L)^{s}.
\end{align*}
\end{theorem}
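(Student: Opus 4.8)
The plan is to reduce the Brunn--Minkowski inequality for $\Pi_{\mu,p}^{\tau}$ to the key defining relation (\ref{ghmt}), which expresses $h(\Pi_{\mu,p}^{\tau}K,\cdot)^p$ as a linear functional in the measure $S_{\mu,p}(K,\cdot)$, and then apply the additivity (\ref{glpba}) of the $L_p$-surface $\mu$-area measure under $L_p$-Blaschke addition together with Lemma \ref{LEMBM}. First I would use (\ref{ghmt}) and (\ref{glpba}): since for each fixed $u \in \mathbb{S}^{n-1}$ the map $\nu \mapsto c_{n,p}(\tau)\int_{\mathbb{S}^{n-1}}\psi_\tau(u\cdot v)^p\,d\nu(v)$ is additive in the measure $\nu$, we obtain
\begin{align*}
h(\Pi_{\mu,p}^{\tau}(K \#_{\mu, p} L),u)^p
&= c_{n,p}(\tau)\int_{\mathbb{S}^{n-1}}\psi_\tau(u\cdot v)^p\,dS_{\mu,p}(K \#_{\mu,p} L, v)\\
&= h(\Pi_{\mu,p}^{\tau}K,u)^p + h(\Pi_{\mu,p}^{\tau}L,u)^p,
\end{align*}
which by (\ref{lpmz}) is precisely the statement that
\begin{align*}
\Pi_{\mu,p}^{\tau}(K \#_{\mu, p} L) = \Pi_{\mu,p}^{\tau}K +_p \Pi_{\mu,p}^{\tau}L.
\end{align*}

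Next I would feed this identity into Lemma \ref{LEMBM}. Taking $\alpha = \beta = 1$ in that lemma, with the two bodies there being $\Pi_{\mu,p}^{\tau}K$ and $\Pi_{\mu,p}^{\tau}L$ (both of which lie in $\mathscr{K}^n_o$, so the hypotheses of Lemma \ref{LEMBM} apply), we get
\begin{align*}
\frac{1 + 2(p-1)}{p}\,\mu\bigl(\Pi_{\mu,p}^{\tau}K +_p \Pi_{\mu,p}^{\tau}L\bigr)^s
\geq \mu(\Pi_{\mu,p}^{\tau}K)^s + \mu(\Pi_{\mu,p}^{\tau}L)^s.
\end{align*}
Since $\frac{1+2(p-1)}{p} = \frac{2p-1}{p} = 2 - 1/p$, combining this with the additivity identity above yields exactly
\begin{align*}
(2-1/p)\,\mu\bigl(\Pi_{\mu,p}^{\tau}(K \#_{\mu,p} L)\bigr)^s \geq \mu(\Pi_{\mu,p}^{\tau}K)^s + \mu(\Pi_{\mu,p}^{\tau}L)^s,
\end{align*}
which is the assertion of the theorem.

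The only genuinely delicate point — and hence what I expect to be the main obstacle — is verifying that $\Pi_{\mu,p}^{\tau}K \#_{\mu,p} L$ is well-defined and that the $L_p$-surface $\mu$-area measures really do add as in (\ref{glpba}); this rests on the Minkowski-existence-type result of Wu \cite{WDH1} that produces a convex body with prescribed $L_p$-surface $\mu$-area measure, which is why the hypotheses $p \geq 1$, $p \neq 1/s$, and the concavity condition {\bf (II)} are imposed. Granting that, the rest of the argument is just the two short algebraic manipulations above, so the proof is essentially immediate once the additivity of $\Pi_{\mu,p}^{\tau}$ under $\#_{\mu,p}$ is recorded. I would also remark that an analogous Brunn--Minkowski inequality for the polar body $\Pi_{\mu,p}^{\tau,*}$ follows by the same scheme, using instead the dual mixed $\mu$-measure machinery (Lemma \ref{rbmi}) in place of Lemma \ref{LEMBM}, but this requires first re-expressing the radial function of $\Pi_{\mu,p}^{\tau,*}(K\#_{\mu,p}L)$ via (\ref{SRF}) and (\ref{hrzh}).
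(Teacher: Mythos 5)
Your proposal is correct and follows essentially the same route as the paper: both start from the additivity $h(\Pi_{\mu,p}^{\tau}(K \#_{\mu, p} L),\cdot)^p=h(\Pi_{\mu,p}^{\tau}K,\cdot)^p+h(\Pi_{\mu,p}^{\tau}L,\cdot)^p$ obtained from (\ref{glpba}) and (\ref{ghmt}), and then apply the $L_p$ Brunn--Minkowski-type bound for $+_p$; the only cosmetic difference is that you invoke Lemma \ref{LEMBM} with $\alpha=\beta=1$ directly, whereas the paper unfolds that lemma's argument inline via $V_{\mu,p}$ and the Minkowski inequality (\ref{lphtm2}).
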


\begin{proof}

From (\ref{glpba}) and (\ref{ghmt}), we have
\begin{align} \label{tyzchs}
h(\Pi_{\mu,p}^{\tau}(K \#_{\mu, p} L),u)^p=h(\Pi_{\mu,p}^{\tau}K,u)^p +h(\Pi_{\mu,p}^{\tau}L,u)^p, \qquad u \in \mathbb{S}^{n-1},
\end{align}
from this, (\ref{lphmt}) and (\ref{lphtm2}), for $N\in\mathscr{K}_{o}^{n}$, we have
\begin{align*}
& V_{\mu, p}(N, \Pi_{\mu,p}^{\tau}(K \#_{\mu, p} L))\\
=&s\int_{\mathbb{S}^{n-1}} h(\Pi_{\mu,p}^{\tau}(K \#_{\mu, p} L),u)^p d S_{\mu, p}(N, u)\\
=&V_{\mu, p}(N, \Pi_{\mu,p}^{\tau}K)+V_{\mu, p}(N, \Pi_{\mu,p}^{\tau}L)\\
\geq& p \mu(N)^{1-s} \mu(\Pi_{\mu,p}^{\tau}K)^{s}+(1-p) \mu(N)+p \mu(N)^{1-s} \mu(\Pi_{\mu,p}^{\tau}L)^{s}+(1-p) \mu(N).
\end{align*}
Let $N=\Pi_{\mu,p}^{\tau}(K \#_{\mu, p} L)$, then
\begin{align*}
(2-1/p)\mu(\Pi_{\mu,p}^{\tau}(K \#_{\mu, p} L))^s \geq \mu(\Pi_{\mu,p}^{\tau}K)^{s}+\mu(\Pi_{\mu,p}^{\tau}L)^{s}.
\end{align*}

\end{proof}

Particularly, if $\tau=0$ and $p=1$ in Theorem \ref{THM51}, then the following result is obvious.
\begin{corollary}
 Suppose that $\mu$ satisfies the homogeneity condition {\bf ({\rm I})} and concavity condition {\bf ({\rm II})} and $\omega$ is an even density function. If $K, L \in \mathscr{K}_{o}^{n}$, then
\begin{align*}
\mu(\Pi_{\mu}(K \#_{\mu} L))^s \geq \mu(\Pi_{\mu}K)^{s}+\mu(\Pi_{\mu}L)^{s}.
\end{align*}
\end{corollary}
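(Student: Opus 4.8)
The plan is to combine the defining identity \eqref{ghmt} of the general $L_p$ $\mu$-projection body with the $L_p$-Blaschke addition of measures \eqref{glpba}, and then import the Minkowski-type inequality \eqref{lphtm2} for $V_{\mu,p}$ exactly in the style of Theorem \ref{THM51}. First I would observe that, since $S_{\mu,p}(K\#_{\mu,p}L,\cdot)=S_{\mu,p}(K,\cdot)+S_{\mu,p}(L,\cdot)$ by \eqref{glpba}, substituting this additive surface $\mu$-area measure into the integral defining $h(\Pi_{\mu,p}^{\tau}\,\cdot\,,u)^p$ in \eqref{ghmt} gives the key additivity relation
\[
h(\Pi_{\mu,p}^{\tau}(K\#_{\mu,p}L),u)^p=h(\Pi_{\mu,p}^{\tau}K,u)^p+h(\Pi_{\mu,p}^{\tau}L,u)^p,\qquad u\in\mathbb{S}^{n-1}.
\]
This is the analogue of \eqref{tyzchs}, and the same computation works here because the density $\omega$ being even is already built into the hypotheses and does not interfere with the linearity step.

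Next I would test this identity against an arbitrary $N\in\mathscr{K}^n_o$ via the integral representation \eqref{lphmt}: multiplying the pointwise identity by $dS_{\mu,p}(N,u)$, integrating over $\mathbb{S}^{n-1}$, and using $V_{\mu,p}(N,M)=s\int_{\mathbb{S}^{n-1}}h(M,u)^p\,dS_{\mu,p}(N,u)$, I obtain
\[
V_{\mu,p}(N,\Pi_{\mu,p}^{\tau}(K\#_{\mu,p}L))=V_{\mu,p}(N,\Pi_{\mu,p}^{\tau}K)+V_{\mu,p}(N,\Pi_{\mu,p}^{\tau}L).
\]
To each of the two terms on the right I apply \eqref{lphtm2}, which gives a lower bound of the shape $p\,\mu(N)^{1-s}\mu(\cdot)^s+(1-p)\mu(N)$. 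Summing the two bounds and then choosing $N=\Pi_{\mu,p}^{\tau}(K\#_{\mu,p}L)$ collapses the left-hand side to $\mu(N)=\mu(\Pi_{\mu,p}^{\tau}(K\#_{\mu,p}L))$; dividing through by $\mu(N)^{1-s}$ and rearranging yields
\[
(2-1/p)\,\mu(\Pi_{\mu,p}^{\tau}(K\#_{\mu,p}L))^s\geq\mu(\Pi_{\mu,p}^{\tau}K)^s+\mu(\Pi_{\mu,p}^{\tau}L)^s,
\]
which is exactly the desired conclusion. The arithmetic of extracting the factor $2-1/p$ from the two copies of the $(1-p)$-term is the only bookkeeping needed, and it is routine.

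I do not anticipate a serious obstacle: this statement is precisely Theorem \ref{THM51} with $\tau=0$ and $p=1$, so the above is really just a specialization. The one point requiring a line of care is confirming that $\#_{\mu,p}$ is well-defined for the parameter values in play and that $\Pi_{\mu,p}^{\tau}$ reduces to $\Pi_{\mu}$ when $\tau=0,\ p=1$ (via \eqref{tytzf} and the definitions in Section \ref{SEC3}), so that the notation $\Pi_{\mu}(K\#_{\mu}L)$ in the corollary matches what the proof of Theorem \ref{THM51} produces. Once that identification is made, the inequality $(2-1/p)\mu(\cdot)^s\ge\cdots$ becomes simply $\mu(\Pi_{\mu}(K\#_{\mu}L))^s\ge\mu(\Pi_{\mu}K)^s+\mu(\Pi_{\mu}L)^s$, and nothing further is required.
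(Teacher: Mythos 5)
Your proposal is correct and follows exactly the paper's route: the paper proves Theorem \ref{THM51} by the same additivity-of-support-functions argument you describe (via \eqref{glpba}, \eqref{lphmt}, \eqref{lphtm2}, then choosing $N=\Pi_{\mu,p}^{\tau}(K\#_{\mu,p}L)$), and obtains this corollary precisely as the specialization $\tau=0$, $p=1$, where the factor $2-1/p$ becomes $1$. Nothing in your argument deviates from or adds a gap to the paper's treatment.
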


\begin{theorem}\label{thm5.3}
 Suppose that $\mu$ satisfies the homogeneity condition {\bf ({\rm I})} and concavity condition {\bf ({\rm II})} and $\omega$ is an even density function. Let $p \geq 1$ and $p \neq 1/s$. If $K, L \in \mathscr{K}_{o}^{n}$ and $\tau \in [-1,1]$, then
\begin{align}\label{tytjbmi}
\mu(\Pi_{\mu,p}^{\tau,*}(K \#_{\mu, p} L))^{-sp} \geq \mu(\Pi_{\mu,p}^{\tau,*}K)^{-sp}+\mu(\Pi_{\mu,p}^{\tau,*}L)^{-sp}.
\end{align}
\end{theorem}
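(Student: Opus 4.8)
The plan is to mimic the proof of Theorem \ref{THM51}, but now work in the dual (polar/radial) setting using the $L_p$ dual mixed $\mu$ measure $\widetilde V_{\mu,-p}$ and Lemma \ref{DMMBS}, rather than $V_{\mu,p}$ and the Minkowski-type inequality (\ref{lphtm2}). The starting point is the additivity relation (\ref{tyzchs}), namely $h(\Pi_{\mu,p}^{\tau}(K\#_{\mu,p}L),u)^p=h(\Pi_{\mu,p}^{\tau}K,u)^p+h(\Pi_{\mu,p}^{\tau}L,u)^p$ for $u\in\mathbb S^{n-1}$. Passing to polar bodies via (\ref{SRF}) turns this into an identity for radial functions:
\begin{align*}
\rho(\Pi_{\mu,p}^{\tau,*}(K\#_{\mu,p}L),u)^{-p}=\rho(\Pi_{\mu,p}^{\tau,*}K,u)^{-p}+\rho(\Pi_{\mu,p}^{\tau,*}L,u)^{-p},
\end{align*}
so, recalling the $L_p$-radial (harmonic) combination (\ref{RA}), we have $\Pi_{\mu,p}^{\tau,*}(K\#_{\mu,p}L)=\Pi_{\mu,p}^{\tau,*}K\,\widetilde+_{-p}\,\Pi_{\mu,p}^{\tau,*}L$ up to the appropriate scalars; in any case one gets a clean decomposition of the left-hand body as an $L_{-p}$-harmonic radial sum of the two right-hand bodies.

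Next I would test the desired inequality (\ref{tytjbmi}) against an arbitrary star body $N\in\mathscr S^n_o$ using $\widetilde V_{\mu,-p}$. By the integral representation (\ref{LpHT}) and the radial identity just derived, the functional splits additively:
\begin{align*}
\widetilde V_{\mu,-p}(N,\Pi_{\mu,p}^{\tau,*}(K\#_{\mu,p}L))=\widetilde V_{\mu,-p}(N,\Pi_{\mu,p}^{\tau,*}K)+\widetilde V_{\mu,-p}(N,\Pi_{\mu,p}^{\tau,*}L).
\end{align*}
Apply the Minkowski inequality Lemma \ref{DMMBS} to each term on the right (with exponent $-p<0$, so the inequality goes the ``correct'' way for this argument), bounding each $\widetilde V_{\mu,-p}(N,\cdot)$ from below by $\mu(N)^{1+sp}$ times the appropriate power of $\mu(\Pi_{\mu,p}^{\tau,*}K)$ or $\mu(\Pi_{\mu,p}^{\tau,*}L)$. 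Finally substitute $N=\Pi_{\mu,p}^{\tau,*}(K\#_{\mu,p}L)$: the left side becomes $\mu(N)=\mu(N)^{1+sp}\mu(N)^{-sp}$ (using $\widetilde V_{\mu,-p}(N,N)=\mu(N)$), and dividing through by $\mu(N)^{1+sp}$ yields exactly $\mu(\Pi_{\mu,p}^{\tau,*}(K\#_{\mu,p}L))^{-sp}\ge\mu(\Pi_{\mu,p}^{\tau,*}K)^{-sp}+\mu(\Pi_{\mu,p}^{\tau,*}L)^{-sp}$.

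The main obstacle is bookkeeping the sign of the exponent $-p$ and the direction of the inequality in Lemma \ref{DMMBS}: since $-p<0$ falls in the regime ``$p<0$'' of that lemma, the stated Minkowski inequality (\ref{mi}) is \emph{reversed}, i.e. $\widetilde V_{\mu,-p}(N,M)\ge\mu(N)^{1+sp}\mu(M)^{-sp}$, which is precisely the direction needed here, so one must be careful to invoke it correctly. A secondary point is checking that $\Pi_{\mu,p}^{\tau,*}(K\#_{\mu,p}L)$, $\Pi_{\mu,p}^{\tau,*}K$, $\Pi_{\mu,p}^{\tau,*}L$ genuinely lie in $\mathscr S^n_o$ so that Lemma \ref{DMMBS} applies (this follows because general $L_p$ $\mu$-projection bodies lie in $\mathscr K^n_o$, hence their polars lie in $\mathscr K^n_o\subset\mathscr S^n_o$), and that $\widetilde V_{\mu,-p}(N,N)=\mu(N)$, which is immediate from (\ref{LpHT}) with $K=L=N$ and the polar relation (\ref{MUM}). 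Everything else is routine manipulation parallel to the proof of Theorem \ref{THM51}.
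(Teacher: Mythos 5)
Your proposal is correct and follows essentially the same route as the paper: both arguments use the additivity of $h(\Pi_{\mu,p}^{\tau}(K\#_{\mu,p}L),\cdot)^p$, convert it via (\ref{SRF}) into an additive splitting of $\widetilde V_{\mu,-p}(N,\cdot)$ over the polar bodies, apply the reversed Minkowski inequality of Lemma \ref{DMMBS} in the regime $-p<0$, and conclude by substituting $N=\Pi_{\mu,p}^{\tau,*}(K\#_{\mu,p}L)$ and dividing by $\mu(N)^{1+sp}$. Your sign bookkeeping for the reversed inequality and the identity $\widetilde V_{\mu,-p}(N,N)=\mu(N)$ are exactly the points the paper relies on.
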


\begin{proof}

From (\ref{LpHT}), (\ref{SRF}), (\ref{tyzchs}), and Lemma \ref{DMMBS}, for $Q\in\mathscr{K}_{o}^{n}$, we get
\begin{align*}
\widetilde{V}_{\mu,-p}(Q, \Pi_{\mu,p}^{\tau,*}(K \#_{\mu, p} L))=&s\int_{\mathbb{S}^{n-1}}\rho(Q,u)^\frac{1+sp}{s}\rho(\Pi_{\mu,p}^{\tau,*}(K \#_{\mu, p} L),u)^{-p}d\mu(u)\\
=&s\int_{\mathbb{S}^{n-1}}\rho(Q,u)^\frac{1+sp}{s}h(\Pi_{\mu,p}^{\tau}(K \#_{\mu, p} L),u)^{p}d\mu(u)\\
=&s\int_{\mathbb{S}^{n-1}}\rho(Q,u)^\frac{1+sp}{s}(h(\Pi_{\mu,p}^{\tau}K,u)^p +h(\Pi_{\mu,p}^{\tau}L,u)^p)d\mu(u)\\
=&\widetilde{V}_{\mu,-p}(Q, \Pi_{\mu,p}^{\tau,*}K)+\widetilde{V}_{\mu,-p}(Q, \Pi_{\mu,p}^{\tau,*}L)\\
\geq&\mu(Q)^{1+sp}\mu(\Pi_{\mu,p}^{\tau,*}L)^{-sp}+\mu(Q)^{1+sp}\mu(\Pi_{\mu,p}^{\tau,*}K)^{-sp}
\end{align*}
Let $Q=\Pi_{\mu,p}^{\tau,*}(K \#_{\mu, p} L)$, we obtain 
\begin{align*}
\mu(\Pi_{\mu,p}^{\tau,*}(K \#_{\mu, p} L))^{-sp}
\geq\mu(\Pi_{\mu,p}^{\tau,*}L)^{-sp}+\mu(\Pi_{\mu,p}^{\tau,*}K)^{-sp}.
\end{align*}
\end{proof}

Clearly, if $\tau=0$ and $p=1$ in Theorem \ref{thm5.3}, then we can obtain

\begin{corollary}
 Suppose that $\mu$ satisfies the homogeneity condition {\bf ({\rm I})} and concavity condition {\bf ({\rm II})} and $\omega$ is an even density function. If $K, L \in \mathscr{K}_{o}^{n}$, then
\begin{align*}
\mu(\Pi_{\mu}^{\ast}(K \#_{\mu} L))^{-s} \geq \mu(\Pi_{\mu}^{\ast}K)^{-s}+\mu(\Pi_{\mu}^{\ast}L)^{-s}.
\end{align*}
\end{corollary}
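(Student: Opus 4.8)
The plan is to read off this corollary as the special case $\tau = 0$, $p = 1$ of Theorem \ref{thm5.3}. First I would unwind the notation: by the conventions of Section \ref{SEC3} one has $\Pi_{\mu,1}^{0}K = \Pi_{\mu,1}K = \Pi_{\mu}K$ and $K \#_{\mu,1} L = K \#_{\mu} L$, so that $\Pi_{\mu,1}^{0,*}K = \Pi_{\mu}^{*}K$; hence, putting $p = 1$ (so that $-sp = -s$) and $\tau = 0$, inequality (\ref{tytjbmi}) becomes \emph{verbatim} the asserted inequality $\mu(\Pi_{\mu}^{*}(K \#_{\mu} L))^{-s} \geq \mu(\Pi_{\mu}^{*}K)^{-s} + \mu(\Pi_{\mu}^{*}L)^{-s}$.

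The only hypothesis of Theorem \ref{thm5.3} that needs to be checked for this specialization is the side condition $p \neq 1/s$. But under condition {\bf ({\rm I})} we have $s = \tfrac{1}{n + 1/r}$ with $n \geq 1$ and $r > 0$, so $n + 1/r > 1$ and therefore $0 < s < 1$; consequently $1/s > 1 = p$, and the requirement $p \neq 1/s$ is automatic. The remaining hypotheses (homogeneity {\bf ({\rm I})}, concavity {\bf ({\rm II})}, $\omega$ even, $K, L \in \mathscr{K}_{o}^{n}$) are precisely those assumed in the corollary, so Theorem \ref{thm5.3} applies directly and the proof is complete.

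If a self-contained argument is preferred, it is merely the $\tau = 0$, $p = 1$ instance of the proof of Theorem \ref{thm5.3}: from (\ref{glpba}) and the definition of $\Pi_{\mu}$ one gets $h(\Pi_{\mu}(K \#_{\mu} L),u) = h(\Pi_{\mu}K,u) + h(\Pi_{\mu}L,u)$, and pairing an arbitrary $Q \in \mathscr{K}_{o}^{n}$ against this identity through $\widetilde{V}_{\mu,-1}$ while invoking the Minkowski inequality of Lemma \ref{DMMBS} (applicable since $1 < n + 1/r$) yields $\widetilde{V}_{\mu,-1}(Q, \Pi_{\mu}^{*}(K \#_{\mu} L)) \geq \mu(Q)^{1+s}[\mu(\Pi_{\mu}^{*}K)^{-s} + \mu(\Pi_{\mu}^{*}L)^{-s}]$; the choice $Q = \Pi_{\mu}^{*}(K \#_{\mu} L)$ then finishes the proof. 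There is no genuine obstacle here; the one point deserving a moment's attention is confirming that the constraint $p \neq 1/s$ inherited from the parent theorem causes no trouble at $p = 1$, which the homogeneity relation $s = \tfrac{1}{n+1/r}$ guarantees.
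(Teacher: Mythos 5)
Your proposal is correct and matches the paper exactly: the paper obtains this corollary by simply setting $\tau=0$ and $p=1$ in Theorem \ref{thm5.3}. Your additional verification that $p=1\neq 1/s$ holds automatically under condition {\bf ({\rm I})} is a sensible bit of due diligence that the paper leaves implicit.
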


Finally, we prove monotone inequalities for the general $L_p$ $\mu$-centroid body and its polar.
\begin{theorem}\label{thm5.5}
(a) Let $\mu$ satisfy the homogeneity condition {\bf ({\rm I})} and concavity condition {\bf ({\rm II})}, if $ p\geq 1$ and $\tau \in [-1,1]$, $K,L \in \mathscr{S}^n_{\mathrm{o}}$ and $K \subseteq L$, then 
\begin{align}\label{eq51}
p\mu(K)\mu(\Gamma_{\mu,p}^\tau K)^s\leq (\mu(L)+(p-1)\mu(K))\mu(\Gamma_{\mu,p}^\tau L)^s.
\end{align}
(b) Let $\mu$ satisfy the homogeneity condition {\bf ({\rm I})}, if $ p\geq 1$ and $\tau \in [-1,1]$, $K,L \in \mathscr{S}^n_{\mathrm{o}}$ and $K \subseteq L$, then
\begin{align}\label{eq52}
\frac{\mu(\Gamma_{\mu,p}^{\tau,\ast} K)^{sp}}{\mu(K)}\geq \frac{\mu(\Gamma_{\mu,p}^{\tau,\ast} L)^{sp}}{\mu(L)},
\end{align}
equality holds in (\ref{eq52}) if and only if $K=L$.
\end{theorem}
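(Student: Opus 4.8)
\emph{Proof plan.} The common engine is a pointwise comparison that is immediate from the inclusion $K\subseteq L$: since $\psi_{\tau}\ge 0$, the first integral representation in (\ref{ghmz}) gives, for every $u\in\mathbb{S}^{n-1}$,
\begin{align*}
\mu(K)\,h(\Gamma_{\mu,p}^{\tau}K,u)^{p}&=\frac{2}{\alpha_{n,p}(\tau)}\int_{K}\psi_{\tau}(u\cdot x)^{p}\,d\mu(x)\\
&\le\frac{2}{\alpha_{n,p}(\tau)}\int_{L}\psi_{\tau}(u\cdot x)^{p}\,d\mu(x)=\mu(L)\,h(\Gamma_{\mu,p}^{\tau}L,u)^{p}.
\end{align*}

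For part (a) I would bound the mixed volume $V_{\mu,p}(\Gamma_{\mu,p}^{\tau}L,\Gamma_{\mu,p}^{\tau}K)$ from both sides. By (\ref{lphmt}) it equals $s\int_{\mathbb{S}^{n-1}}h(\Gamma_{\mu,p}^{\tau}K,u)^{p}\,dS_{\mu,p}(\Gamma_{\mu,p}^{\tau}L,u)$, so multiplying the displayed estimate by $dS_{\mu,p}(\Gamma_{\mu,p}^{\tau}L,\cdot)$, integrating, and using $V_{\mu,p}(M,M)=\mu(M)$ gives
\begin{align*}
\mu(K)\,V_{\mu,p}(\Gamma_{\mu,p}^{\tau}L,\Gamma_{\mu,p}^{\tau}K)\le\mu(L)\,V_{\mu,p}(\Gamma_{\mu,p}^{\tau}L,\Gamma_{\mu,p}^{\tau}L)=\mu(L)\,\mu(\Gamma_{\mu,p}^{\tau}L).
\end{align*}
On the other hand, the Minkowski inequality (\ref{lphtm2}) applied to the pair $(\Gamma_{\mu,p}^{\tau}L,\Gamma_{\mu,p}^{\tau}K)$ gives $V_{\mu,p}(\Gamma_{\mu,p}^{\tau}L,\Gamma_{\mu,p}^{\tau}K)\ge p\,\mu(\Gamma_{\mu,p}^{\tau}L)^{1-s}\mu(\Gamma_{\mu,p}^{\tau}K)^{s}+(1-p)\mu(\Gamma_{\mu,p}^{\tau}L)$, and this is the only place concavity condition (II) enters. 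Chaining the two bounds, dividing by $\mu(\Gamma_{\mu,p}^{\tau}L)>0$, multiplying through by $\mu(K)\mu(\Gamma_{\mu,p}^{\tau}L)^{s}$, and rearranging yields exactly (\ref{eq51}).

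For part (b) I would compute $\mu(\Gamma_{\mu,p}^{\tau,\ast}K)$ directly. By (\ref{MUM}) and (\ref{SRF}) one has $\mu(\Gamma_{\mu,p}^{\tau,\ast}K)=s\int_{\mathbb{S}^{n-1}}h(\Gamma_{\mu,p}^{\tau}K,u)^{-1/s}\,d\mu(u)$; substituting (\ref{ghmz}) for $h(\Gamma_{\mu,p}^{\tau}K,u)^{p}$ and factoring out the resulting power of $\mu(K)$ gives
\begin{align*}
\frac{\mu(\Gamma_{\mu,p}^{\tau,\ast}K)^{sp}}{\mu(K)}=s^{sp}\,\frac{\alpha_{n,p}(\tau)}{2}\left(\int_{\mathbb{S}^{n-1}}\left(\int_{K}\psi_{\tau}(u\cdot x)^{p}\,d\mu(x)\right)^{-\frac{1}{sp}}d\mu(u)\right)^{sp}.
\end{align*}
The normalising factor $\mu(K)^{-1}$ has now been absorbed, and the right-hand side is a manifestly order-reversing functional of $K$: since $K\subseteq L$ and $\psi_{\tau}\ge 0$ one has $\int_{K}\psi_{\tau}(u\cdot x)^{p}\,d\mu(x)\le\int_{L}\psi_{\tau}(u\cdot x)^{p}\,d\mu(x)$ for every $u$, raising to the negative power $-1/(sp)$ reverses this inequality, and integrating in $u$ and raising to the positive power $sp$ then gives (\ref{eq52}). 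Equality throughout forces $\int_{L\setminus K}\psi_{\tau}(u\cdot x)^{p}\,d\mu(x)=0$ for $\mu$-almost every $u\in\mathbb{S}^{n-1}$; integrating in $u$, applying Fubini, and using $\int_{\mathbb{S}^{n-1}}\psi_{\tau}(u\cdot x)^{p}\,d\mu(u)>0$ for every $x\ne 0$ (valid for all $\tau\in[-1,1]$, since the set $\{u\in\mathbb{S}^{n-1}:\psi_{\tau}(u\cdot x)>0\}$ always has positive $\mu$-measure) forces $\mu(L\setminus K)=0$, hence $K=L$.

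The Fubini-type interchanges and the identity $V_{\mu,p}(M,M)=\mu(M)$ are routine; the one point that needs care is the choice of auxiliary quantity. In (a) one must place $\Gamma_{\mu,p}^{\tau}L$ in the \emph{first} argument of $V_{\mu,p}$, so that the pointwise bound on $h(\Gamma_{\mu,p}^{\tau}K,\cdot)^{p}$ becomes an \emph{upper} bound on the mixed volume and the asymmetry of $V_{\mu,p}$ is exploited correctly; in (b) the heart of the matter is recognising, after the polar-coordinate evaluation, that the $\mu(K)$-normalisation built into $\Gamma_{\mu,p}^{\tau}$ cancels against the power of $\mu(K)$ produced by (\ref{MUM}), after which monotonicity is transparent.
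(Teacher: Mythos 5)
Your proof is correct, but it follows a genuinely more direct route than the paper's, especially in part (b). For part (a) the paper never uses the pointwise bound $\mu(K)h(\Gamma_{\mu,p}^{\tau}K,\cdot)^p\le\mu(L)h(\Gamma_{\mu,p}^{\tau}L,\cdot)^p$ explicitly; instead it starts from the monotonicity $\widetilde{V}_{\mu,-p}(K,Q)\le\widetilde{V}_{\mu,-p}(L,Q)$ of the dual mixed measure in its first argument and converts this into $\mu(K)V_{\mu,p}(M,\Gamma_{\mu,p}^{\tau}K)\le\mu(L)V_{\mu,p}(M,\Gamma_{\mu,p}^{\tau}L)$ via the transfer identity of Proposition \ref{durch} (taking $Q=\Pi_{\mu,p}^{\tau,\ast}M$), before setting $M=\Gamma_{\mu,p}^{\tau}L$ and invoking (\ref{lphtm2}) exactly as you do; so your version reaches the same key intermediate inequality $\mu(K)V_{\mu,p}(\Gamma_{\mu,p}^{\tau}L,\Gamma_{\mu,p}^{\tau}K)\le\mu(L)\mu(\Gamma_{\mu,p}^{\tau}L)$ while bypassing the projection-body detour entirely. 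For part (b) the divergence is larger: the paper again uses $\widetilde{V}_{\mu,-p}(K,\cdot)\le\widetilde{V}_{\mu,-p}(L,\cdot)$, the symmetry Proposition \ref{prop3}, and the dual Minkowski inequality of Lemma \ref{DMMBS} (whose equality case, dilatation, combined with the equality case of (\ref{dlptjb}) yields $K=L$), whereas you evaluate $\mu(\Gamma_{\mu,p}^{\tau,\ast}K)^{sp}/\mu(K)$ in closed form from (\ref{MUM}), (\ref{SRF}) and (\ref{ghmz}) and read off both the inequality and its equality case from elementary monotonicity of integrals — no Minkowski-type inequality is needed at all, which makes it transparent why part (b) requires only condition {\bf ({\rm I})}. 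The only point worth tightening is the final step of your equality analysis: $\mu(L\setminus K)=0$ implies $K=L$ because the radial functions are continuous and the density $\omega$ is positive, so a strict inequality $\rho_K(u_0)<\rho_L(u_0)$ would produce an open subset of $L\setminus K$ of positive $\mu$-measure; this deserves a sentence.
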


\begin{proof}
Since $K,L\in\mathscr{S}^n_o$ and $K\subseteq L$, by (\ref{RC}), (\ref{LpHT}), for $Q\in\mathscr{K}^n_o$, we obtain
\begin{align}\label{dlptjb}
\widetilde{V}_{\mu,-p}(K,Q)\leq \widetilde{V}_{\mu,-p}(L,Q),\end{align}
 equality holds if and only if $K=L$.

For (a), taking $Q=\Pi_{\mu,p}^{\tau,\ast}M $ and $M\in\mathscr{K}^n_o$ in (\ref{dlptjb}), we obtain
\begin{align*}
\widetilde{V}_{\mu,-p}(K,\Pi_{\mu,p}^{\tau,\ast}M)\leq \widetilde{V}_{\mu,-p}
(L,\Pi_{\mu,p}^{\tau,\ast}M),
\end{align*}
From this and Proposition \ref{durch}, we get
\begin{align*}
\mu(K)V_{\mu,p}(M,\Gamma_{\mu,p}^\tau K)\leq \mu(L)V_{\mu,p}(M,\Gamma_{\mu,p}^{\tau} L),
\end{align*}
in the above inequality, let $M=\Gamma_{\mu,p}^\tau L$, and from (\ref{lphtm2}), we have
\begin{align*}
\mu(L)\mu(\Gamma_{\mu,p}^\tau L)&\geq \mu(K)V_{\mu,p}(\Gamma_{\mu,p}^\tau L,\Gamma_{\mu,p}^\tau K)\\
&\geq p \mu(K)(\mu(\Gamma_{\mu,p}^\tau L)^{1-s} \mu(\Gamma_{\mu,p}^\tau K)^{s}+(1-p) \mu(\Gamma_{\mu,p}^\tau L)).
\end{align*}
This immediately gives inequality (\ref{eq51}).

For (b),  let $Q=\Gamma_{\mu,p}^{\tau,\ast} N$ and $N\in\mathscr{K}^n_o$ in (\ref{dlptjb}), we obtain
\begin{align*}
\widetilde{V}_{\mu,-p}(K,\Gamma_{\mu,p}^{\tau,\ast} N)\leq \widetilde{V}_{\mu,-p}(L,\Gamma_{\mu,p}^{\tau,\ast} N),
\end{align*}
with equality if and only if $K=L$. From this and using Proposition \ref{prop3}, we obtain
\begin{align}\label{mgarm}
\mu(K) \widetilde{V}_{\mu,-p}(N,\Gamma_{\mu,p}^{\tau,\ast} K)\leq\mu(L) \widetilde{V}_{\mu,-p}(N,\Gamma_{\mu,p}^{\tau,\ast} L).
\end{align}
Taking $N=\Gamma_{\mu,p}^{\tau,\ast} L$ in (\ref{mgarm}), and using Lemma \ref{DMMBS}, we deduce
\begin{align}\label{eq53}
\nonumber\mu(L)\mu(\Gamma_{\mu,p}^{\tau,\ast} L)\geq& \mu(K) \widetilde{V}_{\mu,-p}(\Gamma_{\mu,p}^{\tau,\ast} L,\Gamma_{\mu,p}^{\tau,\ast} K)\\
\geq& \mu(K)\mu(\Gamma_{\mu,p}^{\tau,\ast} L)^{1+sp}\mu(\Gamma_{\mu,p}^{\tau,\ast} K)^{-sp}.
\end{align}
with equality if and only if $\Gamma_{\mu,p}^\tau K$ and $\Gamma_{\mu,p}^\tau L$ are dilatates.

(\ref{eq53}) yields inequality (\ref{eq52}). By the equality conditions of inequalities (\ref{dlptjb}) and (\ref{eq53}), we see that equality holds in (\ref{eq52}) if and only if $K=L$.
\end{proof}

In particular, if $\tau=0$ and $p=1$ in Theorem \ref{thm5.5}, then we obtain the following

\begin{corollary}
(a) Let $\mu$ satisfy the homogeneity condition {\bf ({\rm I})} and concavity condition {\bf ({\rm II})}, if  $K,L \in \mathscr{S}^n_{\mathrm{o}}$ and $K \subseteq L$, then 
\begin{align*}
\mu(K)\mu(\Gamma_{\mu} K)^s\leq \mu(L)\mu(\Gamma_{\mu} L)^s.
\end{align*}
(b) Let $\mu$ satisfy the  homogeneity condition {\bf ({\rm I})}, if  $K,L \in \mathscr{S}^n_{\mathrm{o}}$ and $K \subseteq L$, then 
\begin{align*}
\frac{\mu(\Gamma_{\mu}^{\ast} K)^{s}}{\mu(K)}\geq \frac{\mu(\Gamma_{\mu}^{\ast} L)^{s}}{\mu(L)},
\end{align*}
equality holds if and only if $K=L$.
\end{corollary}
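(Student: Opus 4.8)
The plan is to obtain the Corollary as the special case $\tau=0$, $p=1$ of Theorem \ref{thm5.5}; no fresh argument is required, only a verification that the hypotheses and the conclusions match up under this substitution.

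For part (a) I would invoke Theorem \ref{thm5.5}(a), whose hypotheses are precisely the homogeneity condition {\bf ({\rm I})} together with the concavity condition {\bf ({\rm II})}, i.e. exactly what is assumed here. Setting $p=1$ in (\ref{eq51}) makes the coefficient $(p-1)$ of $\mu(K)$ vanish and turns the factor $p\mu(K)$ into $\mu(K)$, so (\ref{eq51}) collapses to $\mu(K)\mu(\Gamma_{\mu,1}^{0}K)^{s}\le\mu(L)\mu(\Gamma_{\mu,1}^{0}L)^{s}$. Using the convention $\Gamma_{\mu,p}^{0}=\Gamma_{\mu,p}$ fixed just above (\ref{tytzfc}) and writing $\Gamma_{\mu}K=\Gamma_{\mu,1}^{0}K$ for the $\mu$-centroid body, this is exactly $\mu(K)\mu(\Gamma_{\mu}K)^{s}\le\mu(L)\mu(\Gamma_{\mu}L)^{s}$. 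As in Theorem \ref{thm5.5}(a) --- where the Minkowski inequality (\ref{lphtm2}) enters without its equality case being tracked --- no equality characterization is asserted.

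For part (b) I would invoke Theorem \ref{thm5.5}(b), which requires only the homogeneity condition {\bf ({\rm I})}, again exactly the present hypothesis. Substituting $\tau=0$ and $p=1$ into (\ref{eq52}) gives $sp=s$, hence $\mu(\Gamma_{\mu}^{\ast}K)^{s}/\mu(K)\ge\mu(\Gamma_{\mu}^{\ast}L)^{s}/\mu(L)$, with $\Gamma_{\mu}^{\ast}K$ the polar of the $\mu$-centroid body; and the clause ``equality if and only if $K=L$'' is inherited verbatim from the equality statement of Theorem \ref{thm5.5}(b). (The precise normalization of $\Gamma_{\mu}$ is immaterial: since $\mu$ is $1/s$-homogeneous, replacing $\Gamma_{\mu}$ by a constant multiple scales both sides of each inequality by the same factor.)

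There is no genuine obstacle here; the only point needing care is the sign bookkeeping hidden in Theorem \ref{thm5.5} itself --- in particular that the dual mixed $\mu$-measure Minkowski inequality of Lemma \ref{DMMBS} \emph{reverses} for the negative exponent $-p$, which is exactly what produces the ``$\ge$'' direction in (\ref{eq52}). If a self-contained derivation were preferred, the scheme of the proof of Theorem \ref{thm5.5} specializes cleanly: from $K\subseteq L$ one has $\widetilde{V}_{\mu,-1}(K,Q)\le\widetilde{V}_{\mu,-1}(L,Q)$ by (\ref{RC}) and Lemma \ref{Lpdmmm}; Propositions \ref{durch} and \ref{prop3} then move the arguments inside the centroid bodies; and finally (\ref{lphtm2}) for part (a) and the reversed form of Lemma \ref{DMMBS} for part (b), applied with the convenient choices $M=\Gamma_{\mu}L$ and $N=\Gamma_{\mu}^{\ast}L$ respectively, yield the two inequalities, with $K=L$ as the equality case in (b).
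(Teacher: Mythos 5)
Your proposal is correct and coincides with the paper's own derivation: the paper obtains this corollary exactly by setting $\tau=0$ and $p=1$ in Theorem \ref{thm5.5}, so that (\ref{eq51}) reduces to $\mu(K)\mu(\Gamma_{\mu}K)^{s}\le\mu(L)\mu(\Gamma_{\mu}L)^{s}$ and (\ref{eq52}) to the stated polar inequality with $sp=s$, the equality case in (b) being inherited verbatim. Nothing further is needed.
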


\end{sloppypar}
\end{document}